\date{ }
\numberwithin{equation}{section}
\numberwithin{figure}{section}
\numberwithin{table}{section}
\theoremstyle{plain}
\theoremstyle{remark}
\newtheorem{rem}{Remark}[section]
\def\bx{\mathbf{x}}
\def\cL{\mathcal{L}}
\def\cG{\mathcal{G}}
\def\cE{\mathcal{E}}
\def\cV{\mathcal{V}}
\newcommand{\ben}{\begin{eqnarray}}
\newcommand{\een}{\end{eqnarray}}
\newcommand{\beq}{\begin{equation}}
\newcommand{\eeq}{\end{equation}}
\newcommand{\bea}{\begin{array}}
\newcommand{\eea}{\end{array}}
\newcommand{\bef}{\begin{figure}}
\newcommand{\eef}{\end{figure}}
\newtheorem{scheme}{Scheme}[section]
\begin{document}
\title{Improving the Accuracy and Consistency of the Scalar Auxiliary Variable (SAV) Method with Relaxation}
\author[Maosheng  Jiang, Zengyan Zhang, and J. Zhao]{
Maosheng Jiang\affil{1} , Zengyan Zhang\affil{2} and 
Jia Zhao\affil{2}\comma \corrauth}
\address{\affilnum{1}\ School of Mathematics and Statistics, Qingdao University, Qingdao 266071,  China \\
\affilnum{2}\ Department of Mathematics \& Statistics, Utah State University, Logan, UT, 84322, USA }
\email{ {\tt jia.zhao@usu.edu.} (J.~Zhao)}

\begin{abstract}
The scalar auxiliary variable (SAV) method was introduced by Shen et al. in \cite{ShenXuYang2018} and has been broadly used to solve thermodynamically consistent PDE problems. By utilizing scalar auxiliary variables, the original PDE problems are reformulated into equivalent PDE problems. The advantages of the SAV approach, such as linearity, unconditionally energy stability, and easy-to-implement, are prevalent. However, there is still an open issue unresolved, i.e., the numerical schemes resulted from the SAV method preserve a "modified" energy law according to the auxiliary variables instead of the original variables. Truncation errors are introduced during numerical calculations so that the numerical solutions of the auxiliary variables are no longer equivalent to their original continuous definitions. In other words, even though the SAV scheme satisfies a modified energy law, it does not necessarily satisfy the energy law of the original PDE models.
This paper presents one essential relaxation technique to overcome this issue, which we named the relaxed-SAV (RSAV) method. Our RSAV method penalizes the numerical errors of the auxiliary variables by a relaxation technique. In general, the RSAV method keeps all the advantages of the baseline SAV method and improves its accuracy and consistency noticeably. Several examples have been presented to demonstrate the effectiveness of the RSAV approach.

\end{abstract}

\ams{}
\keywords{Scalar Auxiliary Variable (SAV); Energy Stable; Phase Field Models; Gradient Flow System; Relaxation Technique}
\maketitle

\section{Introduction}
Many physical problems, such as interface dynamics\cite{Anderson1998, Guo2021,Yue2014}, crystallization\cite{Elder2004,Liu2007}, thin films\cite{Otto2001,Wan10}, polymers\cite{Fraaije1993,Doi1988}, and liquid crystallization\cite{Forest2004_1,Forest2004_2} could be modeled by gradient flow systems which also agree with the second law of thermodynamics. If the total free energy is known, the gradient flow model could be obtained according to the mobility and the variation of free energy. Because of the nonlinear terms in the governing equation, neither the exact solution nor the numerical solution is easy to obtain. In general, consider the spatial-temporal domain $\Omega_t :=\Omega \times (0, T]$. The dissipative dynamics of the state variable $\phi$ are driven by
\beq 
\partial_t \phi(\bx, t) = -\cG \frac{\delta \cE}{\delta \phi}, \quad (\bx, t) \in \Omega_t,
\eeq 
where $\cG$ is a semi-positive definite operator known as the mobility operator, and $\cE$ is a functional of $\phi$ known as the free energy. The triplet $(\phi, \cG, \cE)$ uniquely defines the dissipative system (gradient flow dynamics). For instance, given $\cG=1$ and $\cE =\displaystyle \int_\Omega \frac{\varepsilon^2}{2}|\nabla \phi|^2 + \frac{1}{4} (\phi^2 -1)^2d\bx$, with $\displaystyle\varepsilon$ as a model parameter, we obtain the following Allen-Cahn equation
\beq \label{eq:Intro-AC}
\partial_t \phi = \varepsilon^2 \Delta \phi - \phi^3 + \phi.
\eeq 
If we consider $\cG=-\Delta$ and $\cE =\displaystyle\int_\Omega \frac{\varepsilon^2}{2}|\nabla \phi|^2 + \frac{1}{4} (\phi^2 -1)^2d\bx$, we obtain the Cahn-Hilliard equation
\beq
\partial_t \phi = \Delta (-\varepsilon^2 \Delta \phi + \phi^3 - \phi).
\eeq 
Given $\cG=1$ and $\cE = \displaystyle\int_\Omega D|\nabla \phi|^2 d\bx$, we have the heat equation
\beq
\partial_t \phi = D \Delta \phi,
\eeq 
where $D$ is the diffusion coefficient.

All these models discussed above have an energy dissipation property. Mainly, 
\beq
\frac{d}{dt} \cE(\phi) = -(\frac{\delta \cE}{\delta \phi}, \frac{\delta \phi}{\delta t}) = - \Big( \frac{\delta \cE}{\delta \phi}, \cG \frac{\delta \cE}{\delta \phi} \Big) \leq 0,
\eeq  
given the boundary terms are diminished to zero. Here we have used the inner producut notation $\displaystyle(f, g) =\int_\Omega fg d\bx$, $\forall f,g \in L^2(\Omega)$.
Numerical algorithms that solve such models shall also preserve the energy dissipation structure, i.e., follow the thermodynamic physical laws. Numerical schemes that preserve the energy dissipation structure are known as energy stable schemes. And if such structure-preserving doesn't depend on the time step, the numerical schemes are known as unconditionally energy stable.

Many energy stable numerical schemes are proposed to approach the solutions of gradient flow models or the dissipative systems. The classical approaches are the fully implicit schemes \cite{FengNM2004}. Though some of them are unconditionally energy stable, solving such fully implicit schemes is not trivial. Nonlinear problems have to be solved in each time step. However, the existence and uniqueness of the solution usually have strong restrictions on the time steps which prevent those fully implicit numerical schemes from being widely used. One remedy is the convex splitting method \cite{Eyre1998Unconditionally}, which splits the nonlinear terms of free energy into the subtraction of two convex functions.  It is easy to check the convex-splitting schemes are unconditional energy stable and uniquely solvable. However, the general type of second-order convex splitting schemes is not available. So far, it is only possible to design second-order convex-splitting schemes case-by-case\cite{wise2011,wise2012, wise2013,Wan10}. Meanwhile, there are many other unconditional energy stable methods, such as stabilization method\cite{Liu2007, Xu2006}, exponential time discretization method \cite{Du2016,Ju&Li&Qiao&ZhangMC2017,Cheng2019}. The stabilization method represents the nonlinear terms explicitly and adds some regularization terms to relax strict constraints for the time steps. Similarly, with the convex splitting method, it is usually limited to first-order accuracy. The exponential time discretization (ETD) method shows high-order accuracy by integrating the governing equation over a single time step and uses polynomial interpolations for the nonlinear terms. But the theoretical proofs for energy stability properties of high-order ETD schemes are still missing.

Recently, the numerical method named invariant energy quadratization(IEQ) or energy quadratization(EQ) are proposed \cite{Yang2017,Yang&Zhao&HeJCAM2018,Yang&Zhao&Wang&ShenM3AS2017, Zhao2017, ZhaoYangGongWang2017, Zhao2018, GongZhaoWang2020, GongZhaoWang2020_2}. It is a generalization of the method of Lagrange multipliers or auxiliary variables from \cite{Guillen2011, Guillen2013}. The IEQ approach permits us to construct linear, second-order, unconditional energy stable schemes, and furthermore arbitrarily high-order unconditionally energy stable schemes\cite{GongZhaoWang2020_2, GongEnergy}. With many advantages of the IEQ or EQ approach, it usually leads to a coupled system with time-dependent coefficients. As a remedy, the SAV approach \cite{ShenXuYang2018, ShenXuYang2019, Lishen2020, LiShenRui2019, LiShen2020_2,XuJCP,Qiao2019} have been proposed by introducing scalar auxiliary variables instead of auxiliary function variables. The SAV method also can be applied to a large class of gradient flow systems, which keeps the advantages of the EQ approach but usually leads to decoupled systems with constant coefficients. These properties make the SAV method easier to implement, so it is highly efficient. Besides, when the researchers applied the SAV approach to many different systems, several modified schemes were developed. Multiple scalar auxiliary variable(MSAV) approach \cite{Chenshen2018} was proposed to solve the phase-field vesicle membrane model where two auxiliary variables introduced to match two additional penalty terms enforcing the volume and surface area. If using the introduced scalar variable to control both the nonlinear and the explicit linear terms, one high efficient SAV approach was developed\cite{Huang2020}, which spend half of time compared with  the original SAV approach while keeping all its other advantages.
One stabilized-scalar auxiliary variable(S-SAV)\cite{Kim2021} approach was proposed to solve the  phase-field surfactant model, which is a decoupled scheme and allowed to be solved step by step. For phase-field surfactant model, the authors in \cite{Zhu2019} also presented certain subtle 
explicit-implicit treatments for stress and convective terms to construct the linear, decoupled, unconditional energy stable schemes based on the classical SAV approach.

However, there is still a big gap for the IEQ or SAV method, making them not as perfect as expected. Mainly, these two methods preserve a "modified" energy law according to the auxiliary variables instead of the original variables. This inconsistency introduces errors during the computation. In the end, even though the IEQ or SAV schemes preserve the "modified" energy law, they are not necessarily preserving the original energy law, i.e., the energy law for the original PDE models might be violated by the numerical solutions. This is known in the community, but so far, no good remedy is available yet. This motivates our research in this paper. With a novel relaxation step, we effectively penalize the inconstancy between numerical solutions for the auxiliary variables and their continuous definitions. Thus we name this new approach as the relaxed-SAV (RSAV) method. Through the RSAV method, we are able to design novel linear, second-order, unconditionally energy stable schemes, which
keep the advantage of the baseline SAV method and preserve the original energy laws. It turns out that the relaxation approach effectively improves the accuracy and consistency of the SAV method noticeably.


The rest of this paper is organized as follows. In Section 2, we revisit the baseline SAV approach and multiple SAV approaches for the general gradient flow system.  Section 3 proposes the relaxed-SAV and relaxed-MSAV schemes. The energy stability properties of the RSAV and RMSAV methods are proved rigorously. Then, in Section 4, several specific examples and numerical tests are provided to verify the accuracy and effectiveness of the proposed relaxed SAV numerical schemes.  In the end, we give a brief conclusion.

\section{A brief review of the SAV method}
Consider the general gradient flow model
\beq \label{eq:generic-model}
\frac{\partial \phi}{\partial t} = -\cG \frac{\delta \cE}{\delta \phi},
\eeq 
where $\phi$ is the state variable, $\cE$ is the free energy, and $\cG$ is a semi-positive definite operator for dissipative systems (and a skew-symmetric operator for reversible systems or Hamiltonian systems). In the rest of this paper, we consider periodic boundary conditions for simplicity, though all our results can be applied  to models with thermodynamically consistent boundary conditions.

For the general gradient flow model \eqref{eq:generic-model}, it has the following energy law
\beq
\frac{d}{dt} \cE(\phi) = (\frac{\delta \cE}{\delta \phi}, \frac{\partial \phi}{\partial t}) = - \Big( \frac{\delta \cE}{\delta \phi}, \cG \frac{\delta \cE}{\delta \phi} \Big).
\eeq 
When $\cG$ is semi-positive definite, we have $\frac{d\cE}{dt} \leq 0$, and when $\cG$ is a skew-symmetric operator, we have $\frac{d\cE}{dt} = 0$. Here we use the notation, $(f, g) = \displaystyle\int_\Omega fg d\bx$, $\forall f, g\in L^2(\Omega)$. The induced norm will be denoted as $\| f\| =\sqrt{(f,f)}$.

\subsection{Baseline scalar auxiliary variable (SAV) method}
Here we briefly illustrate the scalar auxiliary variable (SAV) method that was first introduced in \cite{ShenXuYang2018}. Following the notations in \cite{ShenXuYang2018}, we start with a simplified free energy
\beq \label{eq:original-energy}
\cE = \int_\Omega \Big( \frac{1}{2} \phi \cL\phi + F(\phi)  \Big) d\bx,
\eeq 
where $\cL$ is a linear operator, and $F$ is the bulk free energy density. Also, we denote the identity operator as $I$ that will be used in the rest of this paper.  Then the gradient flow model in \eqref{eq:generic-model} is specified as
\beq  \label{eq:general-gradient-flow}
\partial_t \phi = -\cG (\cL \phi + F'(\phi)),
\eeq 
with the following energy law
\beq \label{eq:general-gradient-flow-energy-law}
\frac{d \cE}{dt} = \int_\Omega \frac{\delta \cE}{\delta \phi} \frac{\partial \phi}{\partial t} d\bx = -\Big(\cL \phi + F'(\phi), \cG (\cL \phi + F'(\phi)) \Big).
\eeq 

For the SAV method, a scalar auxiliary  variable $q(t)$ is introduced as
\beq \label{eq:SAV-q}
q(t) :=Q(\phi) = \sqrt{ \int_\Omega  (F(\phi) - \frac{1}{2}\gamma_0 \phi^2) d\bx +C_0},
\eeq
where $C_0>0$ is a constant making sure $Q(\phi)$ is well-defined, i.e., $\displaystyle\int_\Omega  (F(\phi) - \frac{1}{2}\gamma_0 \phi^2)d\bx+C_0 >0$. Here $\gamma_0$ is a regularization parameter that was first introduced in \cite{ChenZhaoYangANM2018}. With the scalar auxiliary variable $q(t)$, the gradient flow model \eqref{eq:general-gradient-flow} is reformulated into an equivalent form
\begin{subequations} \label{eq:gradient-flow-SAV}
\begin{align}
& \frac{\partial \phi}{\partial t} = - \cG \Big( \cL\phi +\gamma_0 \phi  + \frac{ q(t)}{Q(\phi)} V(\phi) \Big), \\
& \frac{dq(t)}{dt} = \frac{1}{2Q(\phi)} \int_\Omega V(\phi) \partial_t\phi d\bx,  \quad V(\phi)=F'(\phi)-\gamma_0 \phi.
\end{align}
\end{subequations}

Denote the modified energy $\hat{E}$ as
\beq \label{eq:SAV-energy}
\hat{E} = \int_\Omega \Big( \frac{1}{2}\phi \cL\phi + \frac{1}{2}\gamma_0\phi^2 \Big) d\bx +q^2 - C_0.
\eeq 
The reformulated model \eqref{eq:gradient-flow-SAV} has the following energy law
\begin{subequations}
\begin{align}
\frac{d\hat{E}}{dt} 
&= \int_\Omega \frac{\delta \hat{E}}{\delta \phi}\frac{\partial \phi}{\partial t} d\bx + \frac{\delta \hat{E}}{\delta q}\frac{dq}{dt} \\
&= -\Big(   \cL\phi +\gamma_0 \phi  + \frac{ q(t)}{Q(\phi)} V(\phi) , \cG ( \cL\phi +\gamma_0 \phi  + \frac{ q(t)}{Q(\phi)} V(\phi) )\Big).
\end{align}
\end{subequations}

\begin{rem}
With the SAV transformation, numerical algorithms can be introduced to solve the equivalent model in \eqref{eq:gradient-flow-SAV} that in turn solve the original model in \eqref{eq:general-gradient-flow}, since \eqref{eq:gradient-flow-SAV} and \eqref{eq:general-gradient-flow} are equivalent. 
\end{rem}

Consider the time domain $[0, T]$, and we discretize it into equally distanced meshes $0=t_0<t_1< \cdots<t_N=T$, with $t_i = i \delta t$ and $\delta t=\frac{T}{N}$. Then we use $(\bullet)^{n+1}$ to represent the numerical approximation of $(\bullet)$ at $t_{n+1}$. With these notations, we recall the two second-order SAV schemes in \cite{ShenXuYang2018}. First of all, if we use semi-implicit backward differentiation formula (BDF) for the time discretization, we can get the SAV-BDF2 scheme as below.

\begin{scheme}[Second-order SAV-BDF2 Scheme] \label{sch:SAV-BDF}
\begin{subequations}\label{eq:SAV-BDF}
\begin{align}
& \frac{3 \phi^{n+1}-4\phi^n+\phi^{n-1}}{2\delta t}  = - \cG \mu^{n+1}, \\
& \mu^{n+1}=   \cL\phi^{n+1} + \gamma_0 \phi^{n+1} + \frac{ q^{n+1}}{Q(\overline{\phi}^{n+1})} V(\overline{\phi}^{n+1}), \\
& \frac{ 3q^{n+1} - 4q^{n}+q^{n-1}}{2\delta t} = \int_\Omega \frac{ V(\overline{\phi}^{n+1})}{2Q(\overline{\phi}^{n+1})} \frac{3 \phi^{n+1}-4\phi^n+\phi^{n-1}}{2\delta t}  d\bx,
\end{align}
\end{subequations}
where $\overline{\phi}^{n+1}=\frac{3}{2}{\phi}^{n}-\frac{1}{2}{\phi}^{n-1}$ and $V(\overline{\phi}^{n+1})=F'(\overline{\phi}^{n+1})-\gamma_0\overline{\phi}^{n+1}$.
\end{scheme}

The SAV-BDF2 scheme \ref{sch:SAV-BDF} has the following discrete energy law.
\begin{theorem}\label{Theo:SAV-BDF}
The Scheme \ref{sch:SAV-BDF} is unconditionally energy stable in the sense that \cite{ShenXuYang2019}
\begin{subequations}
\begin{align}
& \frac{1}{4}[({\phi}^{n+1},  (\cL+\gamma_0 I ){\phi}^{n+1})+(2{\phi}^{n+1}-{\phi}^{n},  (\cL+\gamma_0 I)(2{\phi}^{n+1}-{\phi}^{n}))]+({q}^{n+1})^2+(2{q}^{n+1}-{q}^{n})^2\notag\\
&\qquad -\frac{1}{4}[({\phi}^{n},  (\cL+\gamma_0 I ){\phi}^{n})+(2{\phi}^{n}-{\phi}^{n-1},  (\cL+\gamma_0 I)(2{\phi}^{n}-{\phi}^{n-1}))]-({q}^{n})^2-(2{q}^{n}-{q}^{n-1})^2\notag\\
& \qquad\qquad\qquad\qquad \qquad\qquad\qquad\qquad \qquad\qquad\qquad\qquad \le- {\delta t} (\cG \mu^{n+1},\mu^{n+1}).\notag
\end{align}
\end{subequations}
\end{theorem}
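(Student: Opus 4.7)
The plan is to mimic the standard energy-estimate pattern for BDF2 SAV schemes: test the state equation against $\mu^{n+1}$, multiply the auxiliary-variable equation by an appropriate factor of $q^{n+1}$, and use the definition of $\mu^{n+1}$ in \eqref{eq:SAV-BDF}(b) to couple the two identities so that the nonlinear term $\frac{q^{n+1}}{Q(\overline{\phi}^{n+1})}V(\overline{\phi}^{n+1})$ cancels exactly. Concretely, I would first take the $L^2$ inner product of \eqref{eq:SAV-BDF}(a) with $\mu^{n+1}$ to obtain
\begin{equation*}
\frac{1}{2\delta t}\bigl(3\phi^{n+1}-4\phi^n+\phi^{n-1},\,\mu^{n+1}\bigr)=-\bigl(\cG\mu^{n+1},\mu^{n+1}\bigr),
\end{equation*}
then substitute the expression for $\mu^{n+1}$ from \eqref{eq:SAV-BDF}(b), splitting the left side into a linear part involving $(\cL+\gamma_0 I)\phi^{n+1}$ and a nonlinear part involving $q^{n+1}V(\overline{\phi}^{n+1})/Q(\overline{\phi}^{n+1})$. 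Using \eqref{eq:SAV-BDF}(c), the latter equals precisely $\frac{1}{\delta t}q^{n+1}(3q^{n+1}-4q^{n}+q^{n-1})$, which is exactly the quantity produced by multiplying \eqref{eq:SAV-BDF}(c) by $2q^{n+1}$. This structural cancellation is the whole reason the SAV reformulation yields a clean discrete energy law.

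The central algebraic ingredient is the BDF2 polarization identity
\begin{equation*}
2(3a-4b+c)\,a = a^{2}+(2a-b)^{2}-b^{2}-(2b-c)^{2}+(a-2b+c)^{2},
\end{equation*}
which lifts to any symmetric positive semidefinite bilinear form $(\cdot,A\cdot)$. I would apply it once with $A=\cL+\gamma_0 I$ to the triple $(\phi^{n+1},\phi^{n},\phi^{n-1})$ and once to the scalar triple $(q^{n+1},q^{n},q^{n-1})$. Adding the two results, multiplying by $2\delta t$, and dropping the non-negative residual $(\phi^{n+1}-2\phi^{n}+\phi^{n-1},(\cL+\gamma_0 I)(\phi^{n+1}-2\phi^{n}+\phi^{n-1}))$ and its scalar analogue $(q^{n+1}-2q^{n}+q^{n-1})^{2}$ converts the identity into the telescoping inequality stated in the theorem. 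The $\frac{1}{4}$ coefficient in front of the $\phi$-quadratic block appears from the $\frac{1}{2}$ in the polarization identity combined with absorbing the factor $2\delta t$ on the right-hand side.

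The only real obstacle is the bookkeeping: one must (i) verify that $\cL+\gamma_0 I$ is symmetric and positive semidefinite so that the polarization identity genuinely yields a non-negative residual, and (ii) track the factors of $2\delta t$ and $2q^{n+1}$ carefully so that the coupling between the $\phi$-identity and the $q$-identity cancels cleanly rather than leaving a stray $q^{n+1}$ floating around. Everything else is a direct computation, and no inequality beyond discarding the explicit non-negative BDF2 residuals is needed, so the scheme is unconditionally energy stable for any $\delta t>0$.
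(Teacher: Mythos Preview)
Your proposal is correct and is precisely the standard energy-estimate argument for BDF2--SAV schemes: test the first equation against $\mu^{n+1}$, use the auxiliary-variable update to cancel the nonlinear coupling, apply the BDF2 polarization identity $2(3a-4b+c)a = a^{2}+(2a-b)^{2}-b^{2}-(2b-c)^{2}+(a-2b+c)^{2}$, and drop the nonnegative residuals. Note that the paper itself does not supply a proof of this theorem---it merely states the result with a citation to Shen, Xu, and Yang (2019)---and the argument in that reference is exactly the one you outline.
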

Secondly, if we use the semi-implicit Crank-Nicolson method for the time discretization, we will have the SAV-CN scheme as below.
\begin{scheme}[Second-order SAV-CN Scheme] \label{sch:SAV-CN}
\begin{subequations}\label{eq:SAV-CN}
\begin{align}
& \frac{\phi^{n+1}-\phi^n}{\delta t} = - \cG \mu^{n+\frac{1}{2}},\\
& \mu^{n+\frac{1}{2}} = \cL\phi^{n+\frac{1}{2}} + \gamma_0 \phi^{n+\frac{1}{2}} + \frac{ q^{n+\frac{1}{2}}}{Q(\overline{\phi}^{n+\frac{1}{2}})} V(\overline{\phi}^{n+\frac{1}{2}}),\\
& \frac{ q^{n+1} - q^{n}}{\delta t} = \int_\Omega \frac{ V(\overline{\phi}^{n+\frac{1}{2}})}{2Q(\overline{\phi}^{n+\frac{1}{2}})} \frac{ \phi^{n+1}-\phi^n}{\delta t}  d\bx,
\end{align}
\end{subequations}
where $\overline{\phi}^{n+\frac{1}{2}}=\frac{3}{2}{\phi}^{n}-\frac{1}{2}{\phi}^{n-1}$ and $V(\overline{\phi}^{n+\frac{1}{2}})=F'(\overline{\phi}^{n+\frac{1}{2}})-\gamma_0\overline{\phi}^{n+\frac{1}{2}}$.
\end{scheme}

The SAV-CN scheme has the following discrete energy law.
\begin{theorem}\label{Theo:SAV-CN}
The Scheme \ref{sch:SAV-CN} is unconditionally energy stable in the sense that\cite{ShenXuYang2019}

\begin{subequations}
\begin{align}
& \frac{1}{2}({\phi}^{n+1},  (\cL+\gamma_0 I ){\phi}^{n+1})+({q}^{n+1})^2-\frac{1}{2}({\phi}^{n},  (\cL+\gamma_0 I ){\phi}^{n})-({q}^{n})^2=- {\delta t}  (\cG\mu^{n+1},\mu^{n+1}). \notag
\end{align}
\end{subequations}
\end{theorem}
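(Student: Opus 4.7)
The plan is to mimic, at the discrete level, the continuous energy identity derived for the reformulated SAV system. Because the Crank--Nicolson midpoint structure is built into Scheme \ref{sch:SAV-CN}, I expect every step to yield an exact equality rather than an inequality; the job is therefore to find the correct test functions that expose this cancellation.

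First, I would take the $L^2$ inner product of equation (a) with $\delta t\,\mu^{n+\frac{1}{2}}$ to obtain
\begin{equation*}
(\phi^{n+1}-\phi^n,\,\mu^{n+\frac{1}{2}}) = -\delta t\,(\cG\mu^{n+\frac{1}{2}},\mu^{n+\frac{1}{2}}),
\end{equation*}
and then substitute the definition of $\mu^{n+\frac{1}{2}}$ from (b) to split the left-hand side into a linear part $(\phi^{n+1}-\phi^n,(\cL+\gamma_0 I)\phi^{n+\frac{1}{2}})$ and a nonlinear SAV-coupling part involving $V(\overline{\phi}^{n+\frac{1}{2}})/Q(\overline{\phi}^{n+\frac{1}{2}})$ together with $q^{n+\frac{1}{2}}$.

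Second, assuming $\cL$ is self-adjoint (the standard working hypothesis for the SAV method, already implicit in the modified energy \eqref{eq:SAV-energy}), the polarization identity gives the clean telescoping
\begin{equation*}
(\phi^{n+1}-\phi^n,(\cL+\gamma_0 I)\phi^{n+\frac{1}{2}}) = \tfrac{1}{2}(\phi^{n+1},(\cL+\gamma_0 I)\phi^{n+1}) - \tfrac{1}{2}(\phi^n,(\cL+\gamma_0 I)\phi^n).
\end{equation*}
Third, I would multiply the auxiliary $q$-equation (c) by $2\delta t\,q^{n+\frac{1}{2}}$ and use the elementary identity $2q^{n+\frac{1}{2}}(q^{n+1}-q^n) = (q^{n+1})^2-(q^n)^2$ to obtain
\begin{equation*}
(q^{n+1})^2 - (q^n)^2 = \int_\Omega \frac{q^{n+\frac{1}{2}}\,V(\overline{\phi}^{n+\frac{1}{2}})}{Q(\overline{\phi}^{n+\frac{1}{2}})}(\phi^{n+1}-\phi^n)\,d\bx,
\end{equation*}
which is exactly the nonlinear coupling term produced in step one. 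Adding this to the telescoped linear identity and moving the dissipation term to the right yields the claimed equality.

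I do not anticipate a serious obstacle; the only real subtlety is verifying that the scalar factor in (b) and the scalar factor multiplying the time difference in (c) are \emph{identical}, so that when one multiplies (c) by $2\delta t\,q^{n+\frac{1}{2}}$ the result matches the SAV-coupling contribution from (b) term-by-term. This is precisely why the scheme was constructed with the common midpoint extrapolation $\overline{\phi}^{n+\frac{1}{2}}$ appearing in both equations. A minor point worth noting is that the theorem statement writes $\mu^{n+1}$ on the right-hand side while the scheme defines $\mu^{n+\frac{1}{2}}$; my proof will produce the latter, consistent with the scheme.
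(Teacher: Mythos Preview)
Your proposal is correct and is precisely the standard argument: test equation (a) with $\delta t\,\mu^{n+\frac{1}{2}}$, use the self-adjointness of $\cL$ together with the midpoint identity $(\phi^{n+1}-\phi^n,\phi^{n+\frac{1}{2}})=\tfrac{1}{2}(\|\phi^{n+1}\|^2-\|\phi^n\|^2)$, multiply (c) by $2\delta t\,q^{n+\frac{1}{2}}$, and add. The paper itself does not supply a proof of this theorem but simply quotes it from \cite{ShenXuYang2019}; your write-up is exactly what that reference does, and your observation that the statement should read $\mu^{n+\frac{1}{2}}$ rather than $\mu^{n+1}$ is also correct.
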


\subsection{Multiple scalar auxiliary variable (MSAV) method}
In the previous subsection, we briefly illustrate the idea of the SAV method. In many scenarios, the energy expression is complicated. Thus, introducing more than one auxiliary variables is necessary. To present the Multiple scalar auxiliary variable (MSAV) method \cite{Chenshen2018}, we consider a more general form of the free energy
\beq \label{eq:general-energy}
\cE = \int_\Omega \Big( \frac{1}{2} \phi \cL\phi + \sum_{i=1}^{k}F_i(\phi)  \Big) d\bx,
\eeq 
where $\cL$ is a linear operator, and $F_i(\phi)$, $i=1,2,\cdots,k$ are the bulk potentials. Then the general gradient flow model in \eqref{eq:generic-model} is specified as
\beq  \label{eq:gradient-flow-M}
\frac{\partial \phi}{\partial t} = -\cG  \Big[ \cL \phi + \sum_{i=1}^k F_i'(\phi) \Big],
\eeq 
which has the following energy law
\beq
\frac{d}{dt} \cE(\phi) = (\frac{\delta \cE}{\delta \phi}, \frac{\partial \phi}{\partial t}) = - \Big( \Big[ \cL \phi + \sum_{i=1}^k F_i'(\phi) \Big], \cG \Big[ \cL \phi + \sum_{i=1}^k F_i'(\phi) \Big] \Big).
\eeq 
When $\cG$ is semi-positive definite, we have $\frac{d\cE}{dt} \leq 0$, and when $\cG$ is a skew-symmetric operator, we have $\frac{d\cE}{dt} = 0$. For the problem in \eqref{eq:gradient-flow-M}, multiple scalar auxiliary variables are introduced as 
\beq \label{eq:MSAV-q}
q_i(t) :=Q_i(\phi) = \sqrt{ \int_\Omega  (F_i(\phi) - \frac{1}{2}\gamma_i \phi^2) d\bx +C_i}, \quad i=1,2,\cdots,k.
\eeq
Here $C_i$ are positive constants that make sure $q_i(t)$ are well defined. And $\gamma_i$ are the regularization constants \cite{ChenZhaoYangANM2018}. 
With the scalar auxiliary variables $q_i(t)$, the gradient flow model \eqref{eq:gradient-flow-M} can be reformulated into an equivalent form
\begin{subequations} \label{eq:general-gradient-flow-MSAV}
\begin{align}
& \frac{\partial \phi}{\partial t} = - \cG \Big( \cL\phi + \sum_{i=1}^k \gamma_i \phi  +\sum_{i=1}^{k} \frac{ q_i(t)}{Q_i(\phi)} V_i(\phi) \Big), \\
& \frac{d q_j(t)}{dt} = \frac{1}{2Q_j(\phi)} \int_\Omega V_j(\phi) \partial_t\phi d\bx, \quad  j=1,2,\cdots,k,
\end{align}
\end{subequations}
where $V_i(\phi)=F_i'(\phi)-\gamma_i \phi$,  $i= 1,...,k$.

With the introduction of the multiple scalar auxiliary variables in \eqref{eq:MSAV-q}, we can get the modified free energy as:
\beq \label{eq:MSAV-energy}
\hat{E} = \int_\Omega \Big( \frac{1}{2}\phi \cL\phi + \sum_{i=1}^k \frac{\gamma_i}{2}\phi^2 \Big) d\bx +\sum_{i=1}^{k}(q_i^2 - C_i).
\eeq 
For the reformulated model \eqref{eq:general-gradient-flow-MSAV}, it has the following energy law
\begin{subequations}
\begin{align}
&\frac{d\hat{E}}{dt} = \int_\Omega \frac{\delta \hat{E}}{\delta \phi}\frac{\partial \phi}{\partial t} d\bx + \sum_{i=1}^{k}\frac{\delta \hat{E}}{\delta q_i}\frac{dq_i}{dt}\notag \\
&= -\Big( \Big[ \cL\phi + \sum_{i=1}^k \gamma_i \phi  +\sum_{i=1}^{k} \frac{ q_i(t)}{Q_i(\phi)} V_i(\phi) \Big]  , \cG \Big[ \cL\phi + \sum_{i=1}^k \gamma_i \phi  +\sum_{i=1}^{k} \frac{ q_i(t)}{Q_i(\phi)} V_i(\phi) \Big] \Big).\notag
\end{align}
\end{subequations}

Similarly, second-order numerical schemes can be designed for the reformulated model in \eqref{eq:general-gradient-flow-MSAV}. In particular, the following two schemes can be easily obtained.

\begin{scheme}[Second-order MSAV-BDF2 Scheme]
\begin{subequations}\label{eq:MSAV-BDF}
\begin{align}
& \frac{3 \phi^{n+1}-4\phi^n+\phi^{n-1}}{2\delta t}  = - \cG \mu^{n+1}, \\
& \mu^{n+1}=   \cL\phi^{n+1} + \sum_{i=1}^k \gamma_i \phi^{n+1} + \sum_{i=1}^{k}\frac{ q_i^{n+1}}{Q_i(\overline{\phi}^{n+1})} V_i(\overline{\phi}^{n+1}), \\
& \frac{ 3q_j^{n+1} - 4q_j^{n}+q_j^{n-1}}{2\delta t} = \int_\Omega  \frac{V_j(\overline{\phi}^{n+1})}{2Q_j(\overline{\phi}^{n+1})} \frac{3 \phi^{n+1}-4\phi^n+\phi^{n-1}}{2\delta t}  d\bx, \quad j=1,2,\cdots,k,
\end{align}
\end{subequations}
where $\overline{\phi}^{n+1}=\frac{3}{2}{\phi}^{n}-\frac{1}{2}{\phi}^{n-1}$ and $V_i(\overline{\phi}^{n+1})=F_i'(\overline{\phi}^{n+1})-\gamma_i\overline{\phi}^{n+1}$, $i=1,...,k$.
\end{scheme}

\begin{scheme}[Second-order MSAV-CN Scheme]
\begin{subequations}\label{eq:MSAV-CN}
\begin{align}
& \frac{\phi^{n+1}-\phi^n}{\delta t} = - \cG \mu^{n+\frac{1}{2}}, \\
& \mu^{n+\frac{1}{2}} = \cL\phi^{n+\frac{1}{2}} + \sum_{i=1}^k \gamma_i  \phi^{n+\frac{1}{2}} + \sum_{i=1}^{k}\frac{ q_i^{n+\frac{1}{2}}}{Q_i(\overline{\phi}^{n+\frac{1}{2}})} V_i(\overline{\phi}^{n+\frac{1}{2}}),\\
& \frac{ q_j^{n+1} - q_j^{n}}{\delta t} = \int_\Omega \frac{ V_j(\overline{\phi}^{n+\frac{1}{2}})}{2Q_j(\overline{\phi}^{n+\frac{1}{2}})} \frac{ \phi^{n+1}-\phi^n}{\delta t}  d\bx, \quad j=1,2,\cdots, k,
\end{align}
\end{subequations}
where $\overline{\phi}^{n+\frac{1}{2}}=\frac{3}{2}{\phi}^{n}-\frac{1}{2}{\phi}^{n-1}$ and $V_i(\overline{\phi}^{n+\frac{1}{2}})=F_i'(\overline{\phi}^{n+\frac{1}{2}})-\gamma_i\overline{\phi}^{n+\frac{1}{2}}$, $i=1,...,k$. 
\end{scheme}

The MSAV-BDF2 scheme has the following discrete energy law.
\begin{theorem}\label{Theo:MSAV-BDF}
The Scheme \ref{eq:MSAV-BDF} is unconditionally energy stable in the sense that \cite{Chenshen2018}
\begin{subequations}
\begin{align}
&  \frac{1}{4}[({\phi}^{n+1},  (\cL+ \sum_{i=1}^k \gamma_i  I ){\phi}^{n+1})+(2{\phi}^{n+1}-{\phi}^{n},  (\cL+ \sum_{i=1}^k \gamma_i I)(2{\phi}^{n+1}-{\phi}^{n}))]\notag\\
&\quad  -\frac{1}{4}[({\phi}^{n},  (\cL+ \sum_{i=1}^k\gamma_i  I ){\phi}^{n})+(2{\phi}^{n}-{\phi}^{n-1},  (\cL+ \sum_{i=1}^k\gamma_i I)(2{\phi}^{n}-{\phi}^{n-1}))]\notag\\
& \qquad\quad
+\sum_{i=1}^{k}[({q_i}^{n+1})^2+(2{q_i}^{n+1}-{q_i}^{n})^2] - \sum_{i=1}^{k}[({q_i}^{n})^2+(2{q_i}^{n}-{q_i}^{n-1})^2]   \notag\\
& \qquad\qquad\qquad \qquad\qquad\qquad\qquad \qquad\qquad\qquad  \le- {\delta t}  (\cG\mu^{n+1},\mu^{n+1}).\notag
\end{align}
\end{subequations}
\end{theorem}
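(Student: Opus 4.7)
The plan is to mimic the standard BDF2 energy argument that establishes Theorem \ref{Theo:SAV-BDF}, the only change being that the $k$ nonlinear bulk contributions are now tracked separately. The central algebraic tool is the BDF2 polarization identity
\begin{equation*}
2a(3a-4b+c)=a^{2}-b^{2}+(2a-b)^{2}-(2b-c)^{2}+(a-2b+c)^{2},
\end{equation*}
which I will use at the $L^{2}$ level for $\phi$ and componentwise for each scalar $q_{i}$.

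First I would take the $L^{2}$ inner product of equation (a) of the MSAV-BDF2 scheme with $\mu^{n+1}$ to obtain $\frac{1}{2\delta t}(\mu^{n+1},3\phi^{n+1}-4\phi^{n}+\phi^{n-1}) = -(\cG\mu^{n+1},\mu^{n+1})$, and substitute the expression for $\mu^{n+1}$ from (b). The left-hand side then splits into a linear piece driven by $(\cL+\sum_{i=1}^{k}\gamma_{i}I)\phi^{n+1}$ and $k$ nonlinear pieces of the form
\begin{equation*}
\int_{\Omega}\frac{q_{i}^{n+1}\,V_{i}(\overline{\phi}^{n+1})}{Q_{i}(\overline{\phi}^{n+1})}\,\frac{3\phi^{n+1}-4\phi^{n}+\phi^{n-1}}{2\delta t}\,d\bx.
\end{equation*}

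Second, to cancel the $k$ nonlinear pieces I would multiply each scalar equation (c) by $2q_{j}^{n+1}$ and sum over $j=1,\dots,k$. By design of the scheme, the right-hand side of this new identity is exactly the negative of the corresponding nonlinear piece above, so the unknown ratios $V_{j}(\overline{\phi}^{n+1})/Q_{j}(\overline{\phi}^{n+1})$ drop out entirely when the two contributions are summed. Applying the polarization identity with $(a,b,c)=(q_{j}^{n+1},q_{j}^{n},q_{j}^{n-1})$ to the left-hand side of (c) converts each scalar contribution into
\begin{equation*}
(q_{j}^{n+1})^{2}+(2q_{j}^{n+1}-q_{j}^{n})^{2}-(q_{j}^{n})^{2}-(2q_{j}^{n}-q_{j}^{n-1})^{2}+(q_{j}^{n+1}-2q_{j}^{n}+q_{j}^{n-1})^{2},
\end{equation*}
whose first four terms are precisely the $q_{j}$ increments in the statement and whose last term is non-negative.

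Third, since $\cL+\sum_{i=1}^{k}\gamma_{i}I$ is self-adjoint and non-negative (the latter being built into the choice of the regularization constants $\gamma_{i}$), I would apply the same polarization identity with $(a,b,c)=(\phi^{n+1},\phi^{n},\phi^{n-1})$ at the level of the inner product $(\,\cdot\,,(\cL+\sum_{i}\gamma_{i}I)\,\cdot\,)$ to convert the linear piece into the $\phi$-increment appearing in the theorem statement, plus the non-negative remainder $\frac{1}{4}(\phi^{n+1}-2\phi^{n}+\phi^{n-1},(\cL+\sum_{i}\gamma_{i}I)(\phi^{n+1}-2\phi^{n}+\phi^{n-1}))$. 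Combining the three steps, multiplying through by $2\delta t$, and discarding all non-negative remainders on the left gives the claimed inequality. The main obstacle I anticipate is purely bookkeeping: one must verify that the factor $2\delta t$ and the signs line up so that the nonlinear pieces cancel exactly across the spatial and scalar equations; no new analytical ingredient is needed beyond the single-SAV proof, since each auxiliary variable $q_{j}$ contributes an independent and identically structured copy of the same BDF2 manipulation.
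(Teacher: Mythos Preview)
Your proposal is correct and follows exactly the standard BDF2 energy argument. The paper itself does not supply a proof of this theorem; it merely states the result and cites \cite{Chenshen2018}, so there is nothing in the paper to compare against beyond noting that your argument is the one used in that reference and in \cite{ShenXuYang2019} for the single-SAV case.

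One small wording issue: in your second step you say the right-hand side of (c) multiplied by $2q_{j}^{n+1}$ is ``exactly the negative'' of the corresponding nonlinear piece. In fact it is exactly \emph{equal} to it, so you substitute rather than cancel by addition; the effect on the algebra is the same, and the nonlinear ratios $V_{j}/Q_{j}$ disappear as you describe. Your closing remark about bookkeeping is apt: after multiplying through by $\delta t$ the $\phi$-increment carries the factor $\tfrac14$ while the $q_{j}$-increments carry $\tfrac12$, so the constants in the stated inequality should be checked against the cited source, but this does not affect the structure or validity of the argument.
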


Meanwhile, the MSAV-CN scheme has the following discrete energy law.
\begin{theorem}\label{Theo:MSAV-CN}
The Scheme \ref{eq:MSAV-CN} is unconditionally energy stable in the sense that\cite{Chenshen2018}
\[\frac{1}{2}({\phi}^{n+1},  (\cL+\sum_{i=1}^k \gamma_i I ){\phi}^{n+1})+\sum_{i=1}^{k}({{q}_i}^{n+1})^2-\frac{1}{2}({\phi}^{n},  (\cL+\sum_{i=1}^k \gamma_i I ){\phi}^{n})-\sum_{i=1}^{k}({{q}_i}^{n})^2=- {\delta t} (\cG\mu^{n+\frac{1}{2}},\mu^{n+\frac{1}{2}}).\]
\end{theorem}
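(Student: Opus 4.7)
The plan is to derive the energy identity by testing the three scheme equations with carefully chosen quantities and then summing, exactly mirroring the classical SAV--CN energy proof but tracking the sum over the $k$ auxiliary variables.

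First I would take the $L^2$ inner product of the $\phi$-update equation with $\delta t\,\mu^{n+\frac{1}{2}}$, which immediately yields
\begin{equation*}
(\phi^{n+1}-\phi^n,\mu^{n+\frac{1}{2}}) = -\delta t\,(\cG\mu^{n+\frac{1}{2}},\mu^{n+\frac{1}{2}}).
\end{equation*}
The right-hand side is already the dissipation term appearing in the theorem, so the remaining work is to identify the left-hand side with the discrete time-difference of the modified energy.

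Next I would substitute the definition of $\mu^{n+\frac{1}{2}}$ into the inner product and split it into three pieces. For the linear piece $\cL\phi^{n+\frac{1}{2}}+\sum_i\gamma_i\phi^{n+\frac{1}{2}}$, I would use self-adjointness of $\cL$ together with $\phi^{n+\frac{1}{2}}=\tfrac{1}{2}(\phi^{n+1}+\phi^n)$ and the identity $(a-b,a+b)=\|a\|^2-\|b\|^2$ (applied in the $\cL+\sum_i\gamma_i I$ inner product) to produce exactly
\begin{equation*}
\tfrac{1}{2}(\phi^{n+1},(\cL+\textstyle\sum_i\gamma_i I)\phi^{n+1})-\tfrac{1}{2}(\phi^n,(\cL+\textstyle\sum_i\gamma_i I)\phi^n).
\end{equation*}
For the nonlinear piece $\sum_i \frac{q_i^{n+\frac{1}{2}}}{Q_i(\overline{\phi}^{n+\frac{1}{2}})}V_i(\overline{\phi}^{n+\frac{1}{2}})$, I would test the $q_j$-update equation with $2\delta t\,q_j^{n+\frac{1}{2}}=\delta t(q_j^{n+1}+q_j^n)$. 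The left-hand side collapses via $(q_j^{n+1}-q_j^n)(q_j^{n+1}+q_j^n)=(q_j^{n+1})^2-(q_j^n)^2$, and the right-hand side becomes $\int_\Omega \frac{V_j(\overline{\phi}^{n+\frac{1}{2}})}{Q_j(\overline{\phi}^{n+\frac{1}{2}})}(\phi^{n+1}-\phi^n)\,q_j^{n+\frac{1}{2}}\,d\bx$, which is precisely the contribution of the $j$-th nonlinear term in $(\phi^{n+1}-\phi^n,\mu^{n+\frac{1}{2}})$. Summing over $j=1,\dots,k$ converts the nonlinear piece of the left-hand side into $\sum_{i=1}^k\bigl[(q_i^{n+1})^2-(q_i^n)^2\bigr]$.

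Finally I would assemble the three contributions and recognize the resulting left-hand side as the discrete difference of the modified energy in \eqref{eq:MSAV-energy} (with the $C_i$-constants cancelling), giving the claimed identity. The only place requiring care, and what I regard as the main obstacle, is the consistent bookkeeping between the factor $1/2$ in the definition of $q_j^{n+\frac{1}{2}}=\tfrac{1}{2}(q_j^{n+1}+q_j^n)$ and the factor $1/(2Q_j)$ in the $q_j$-equation: these must balance so that each nonlinear term in $(\phi^{n+1}-\phi^n,\mu^{n+\frac{1}{2}})$ matches exactly with $(q_j^{n+1})^2-(q_j^n)^2$ without spurious factors. No inequality is needed since the scheme is Crank--Nicolson-type; equality holds identically, which is why the theorem states $=$ rather than $\le$.
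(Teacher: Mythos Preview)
Your proposal is correct and is exactly the standard argument for the SAV/MSAV Crank--Nicolson energy identity: test the $\phi$-equation with $\delta t\,\mu^{n+\frac{1}{2}}$, use self-adjointness of $\cL$ and the midpoint identity for the linear part, and multiply each $q_j$-equation by $2\delta t\,q_j^{n+\frac{1}{2}}$ so that the nonlinear contributions collapse to $\sum_i[(q_i^{n+1})^2-(q_i^n)^2]$. Note that the paper itself does not supply a proof of this theorem; it merely states the result and cites \cite{Chenshen2018}, so there is no in-paper argument to compare against---but the route you outline is precisely the one used in that reference and in the single-SAV case (Theorem~\ref{Theo:SAV-CN}).
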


\section{Our Remedy: the relaxed SAV (RSAV) method}

Notice the definition in \eqref{eq:SAV-q} tells us $\frac{q(t)}{Q(\phi)}=1$. Hence, we can observe the two energies \eqref{eq:original-energy} and \eqref{eq:SAV-energy} are equivalent in the PDE level. Meanwhile, the two PDE models \eqref{eq:general-gradient-flow} and \eqref{eq:gradient-flow-SAV} are equivalent. However, after temporal discretization, the numerical results of $q(t)$ and $Q(\phi)$ are not equal anymore, which means the discrete energies of \eqref{eq:original-energy} and \eqref{eq:SAV-energy} are not necessarily equivalent anymore. The major issue is that $q^{n+1}$ is no longer equal to $Q(\phi^{n+1})$ numerically. Thus, an energy stable scheme that satisfies the modified energy law in \eqref{eq:SAV-energy} does not necessarily satisfy the original energy law in \eqref{eq:original-energy}. 

To fix the inconsistency issue for $q^{n+1}$ and $Q(\phi^{n+1})$ (that are supposed to be equal as introduced in \eqref{eq:SAV-q}), we propose a relaxation technique to penalize the difference between $q^{n+1}$ and $Q(\phi^{n+1})$. As will be clear in the following sections, the RSAV method introduces negligible extra computational cost, but it inherits all the baseline SAV method's good properties.

\subsection{Baseline RSAV method}
To start with a simple case, we introduce the RSAV method to solve the problem in \eqref{eq:gradient-flow-SAV}. If we utilize the semi-implicit BDF2 time marching method, we have the following RSAV-BDF2 scheme.

\begin{scheme}[Second-order RSAV-BDF2 Scheme]  \label{scheme:RSAV-BDF2}
We can update $\phi^{n+1}$ via the following two steps:
\begin{itemize}
\item Step 1. Calculate the intermediate solution ($\phi^{n+1}$, $\tilde{q}^{n+1}$) from the baseline SAV method.
\begin{subequations}\label{eq:RSAV-BDF2_step1}
\begin{align}
& \frac{3 \phi^{n+1}-4\phi^n+\phi^{n-1}}{2\delta t} =\mu^{n+1}, \\
&\mu^{n+1}=- \cG \Big(  \cL\phi^{n+1} + \gamma_0 \phi^{n+1} + \frac{ \tilde{q}^{n+1}}{Q(\overline{\phi}^{n+1})} V(\overline{\phi}^{n+1})\Big), \\
& \frac{ 3\tilde{q}^{n+1} - 4q^{n}+q^{n-1}}{2\delta t} = \int_\Omega \frac{ V(\overline{\phi}^{n+1})}{2Q(\overline{\phi}^{n+1})} \frac{3 \phi^{n+1}-4\phi^n+\phi^{n-1}}{2\delta t}  d\bx,
\end{align}
\end{subequations}
Where $\overline{\phi}^{n+1}=\frac{3}{2}{\phi}^{n}-\frac{1}{2}{\phi}^{n-1}$ and $V(\overline{\phi}^{n+1})=F'(\overline{\phi}^{n+1})-\gamma_0\overline{\phi}^{n+1}$.
\item Step 2. Update the scalar auxiliary variable $q^{n+1}$ via a relaxation step as 
\beq\label{new_r_BDF}
q^{n+1} = \xi_0 \tilde{q}^{n+1} + (1-\xi_0) Q(\phi^{n+1}), \quad \xi_0 \in \cV.
\eeq
Here, $\cV$ is a set defined by $\cV = \cV_1 \cap \cV_2$, where
\begin{subequations}
\begin{align}
& \cV_1= \{\xi | \xi \in [0, 1] \}, \\
& \cV_2 = \Big\{ \xi | ({q}^{n+1})^2+(2{q}^{n+1}-{q}^{n})^2-((\tilde{q}^{n+1})^2+(2\tilde{q}^{n+1}-{q}^{n})^2)  \nonumber \\
& \qquad \qquad  \le  {\delta t} \eta  (\cG\mu^{n+1},\mu^{n+1}), \quad q^{n+1} = \xi \tilde{q}^{n+1} + (1-\xi) Q(\phi^{n+1}) \Big\}.
\end{align}
\end{subequations}
Here, $\eta \in [0,1]$ is an artificial parameter that can be manually assigned.
\end{itemize}
\end{scheme}

Several vital observations for Scheme \ref{scheme:RSAV-BDF2} are given as follows.
\begin{rem}
We emphasis that the set $\cV$ in \eqref{new_r_BDF} is non-empty by noticing $1 \in \cV$.
\end{rem}

\begin{rem}
Scheme \ref{scheme:RSAV-BDF2} is second-order accurate in time. In particular, the relaxation step in \eqref{new_r_BDF} doesn't not affect the order of accuracy in time. Notice $\tilde{q}^{n+1} = Q(\phi(\bx, t_{n+1})) + O(\delta t^2)$ and $Q(\phi^{n+1}) = Q(\phi(\bx, t_{n+1})) + O(\delta t^2)$. Hence 
$$
q^{n+1} =  \xi_0 \tilde{q}^{n+1} + (1-\xi_0) Q(\phi^{n+1})= Q(\phi(\bx, t_{n+1})) + O(\delta t^2).
$$
\end{rem}

\begin{rem}[Optimal choice for $\xi_0$]
Here we explain the optimal choice for the relaxation parameter $\xi_0$ in \eqref{new_r_BDF}. $\xi_0$ can be chosen as a solution of the following optimization problem,
\beq
\xi_0 = \min_{\xi \in [0, 1]} \xi, \quad \mbox{ s.t.  }  ({q}^{n+1})^2+(2{q}^{n+1}-{q}^{n})^2-((\tilde{q}^{n+1})^2+(2\tilde{q}^{n+1}-{q}^{n})^2)\le  {\delta t} \eta  (\cG\mu^{n+1},\mu^{n+1}),
\eeq 
with $q^{n+1} = \xi\tilde{q}^{n+1} + (1-\xi)Q(\phi^{n+1})$. This can be simplified as
\beq \label{eq:BDF-optimal-xi0}
\xi_0 = \min_{\xi \in [0, 1]} \xi, \quad \mbox{ s.t.  }  a\xi^2 + b\xi + c \leq 0,
\eeq 
where the coefficients are
\begin{subequations}
\begin{align}
&a=5(\tilde{q}^{n+1}-Q(\phi^{n+1}))^2,\notag \\
&b=2(\tilde{q}^{n+1}-Q(\phi^{n+1}))(5Q(\phi^{n+1})-2q^n),\notag \\
&c=(Q(\phi^{n+1}))^2+(2Q(\phi^{n+1})-q^n)^2-(\tilde{q}^{n+1})^2-(2\tilde{q}^{n+1}-q^n)^2-{\delta t}\eta (\cG \mu^{n+1},\mu^{n+1}).\notag 
\end{align}
\end{subequations}
Notice the fact $\delta t \eta (\cG \mu^{n+1}, \mu^{n+1}) \geq 0$, and $a+b+c \leq 0$. Given $a\neq 0$, the optimization problem in \eqref{eq:BDF-optimal-xi0} can be solved as
$$\xi_0 = \max\{0, \frac{-b -\sqrt{b^2-4ac}}{2a}\}.$$
\end{rem}


\begin{theorem}
The Scheme \ref{scheme:RSAV-BDF2} is unconditionally energy stable.
\end{theorem}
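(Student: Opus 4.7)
The plan is to derive the RSAV-BDF2 energy inequality by splicing together two ingredients: (i) the discrete energy law for the baseline SAV-BDF2 step producing the intermediate pair $(\phi^{n+1}, \tilde q^{n+1})$, and (ii) the defining inequality of the admissible set $\cV_2$ that controls the gap between $\tilde q^{n+1}$ and the relaxed $q^{n+1}$. The relaxation step never touches $\phi^{n+1}$, so any estimate involving only $\phi$ and $\mu^{n+1}$ carries over verbatim from the baseline scheme; the work is entirely on the scalar variable.

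First I would observe that Step 1 of Scheme \ref{scheme:RSAV-BDF2} is exactly the SAV-BDF2 scheme \ref{sch:SAV-BDF} with the output variable renamed $\tilde q^{n+1}$, so Theorem \ref{Theo:SAV-BDF} applies directly and yields
\begin{align*}
& \tfrac{1}{4}\bigl[(\phi^{n+1},(\cL+\gamma_0 I)\phi^{n+1})+(2\phi^{n+1}-\phi^{n},(\cL+\gamma_0 I)(2\phi^{n+1}-\phi^{n}))\bigr]+(\tilde q^{n+1})^2+(2\tilde q^{n+1}-q^{n})^2 \\
& \quad -\tfrac{1}{4}\bigl[(\phi^{n},(\cL+\gamma_0 I)\phi^{n})+(2\phi^{n}-\phi^{n-1},(\cL+\gamma_0 I)(2\phi^{n}-\phi^{n-1}))\bigr]-(q^{n})^2-(2q^{n}-q^{n-1})^2 \\
& \qquad\qquad \le -\delta t\,(\cG\mu^{n+1},\mu^{n+1}).
\end{align*}
Next I would invoke the definition of $\cV_2$, which by construction guarantees
\[
(q^{n+1})^2+(2q^{n+1}-q^{n})^2-\bigl[(\tilde q^{n+1})^2+(2\tilde q^{n+1}-q^{n})^2\bigr]\le \delta t\,\eta\,(\cG\mu^{n+1},\mu^{n+1}).
\]
Adding the two inequalities cancels the $\tilde q^{n+1}$ terms and replaces them by their relaxed counterparts, giving a modified energy decay with right-hand side $-(1-\eta)\delta t\,(\cG\mu^{n+1},\mu^{n+1})$. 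Since $\cG$ is semi-positive definite and $\eta\in[0,1]$, this quantity is non-positive for every time step $\delta t>0$, which is the sought unconditional stability.

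The only thing left to verify is that the procedure is actually well defined, i.e. that $\cV$ is non-empty so that a legitimate $\xi_0$ exists; this is immediate since $\xi=1$ gives $q^{n+1}=\tilde q^{n+1}$, making the $\cV_2$ inequality hold trivially (the left-hand side vanishes while the right-hand side is non-negative), and $1\in\cV_1$. Thus the selection step in \eqref{new_r_BDF} always succeeds and the combined bound above is legal. I do not anticipate a serious obstacle here: the key point is really a bookkeeping argument that the relaxation step is designed precisely so that the additional energy contribution it injects is dominated by a fraction $\eta$ of the dissipation budget already produced by Step 1, leaving a non-negative leftover $(1-\eta)\delta t\,(\cG\mu^{n+1},\mu^{n+1})$ that carries the decay.
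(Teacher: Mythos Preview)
Your proposal is correct and follows essentially the same approach as the paper: invoke Theorem~\ref{Theo:SAV-BDF} for Step~1 to obtain the baseline dissipation inequality in terms of $\tilde q^{n+1}$, add the defining inequality of $\cV_2$ from Step~2, and observe that the resulting right-hand side $-(1-\eta)\delta t\,(\cG\mu^{n+1},\mu^{n+1})$ is non-positive since $\eta\in[0,1]$. Your inclusion of the non-emptiness check for $\cV$ is a nice touch; the paper records that fact separately as a remark rather than inside the proof.
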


\begin{proof}
According to the Theorem \ref{Theo:SAV-BDF}, we could get
\begin{subequations}
\begin{align}
& \frac{1}{4}[({\phi}^{n+1},  (\cL+\gamma_0 I ){\phi}^{n+1})+(2{\phi}^{n+1}-{\phi}^{n},  (\cL+\gamma_0 I)({\phi}^{n+1}-{\phi}^{n}))]+(\tilde{q}^{n+1})^2+(2\tilde{q}^{n+1}-{q}^{n})^2 \notag \\
&\qquad -\frac{1}{4}[({\phi}^{n},  (\cL+\gamma_0 I ){\phi}^{n})+(2{\phi}^{n}-{\phi}^{n-1},  (\cL+\gamma_0 I)({\phi}^{n}-{\phi}^{n-1}))]-({q}^{n})^2-(2{q}^{n}-{q}^{n-1})^2\notag \\
& \qquad\qquad\qquad\qquad \qquad\qquad\qquad\qquad \qquad\qquad\qquad\qquad \le- {\delta t} (\cG \mu^{n+1},\mu^{n+1}),\notag 
\end{align}
\end{subequations}
for the first step of the scheme \ref{scheme:RSAV-BDF2}.
At the same time,  we could get
\beq
(({q}^{n+1})^2+(2{q}^{n+1}-{q}^{n})^2)-((\tilde{q}^{n+1})^2+(2\tilde{q}^{n+1}-{q}^{n})^2)\leq  {\delta t} \eta  (\cG\mu^{n+1},\mu^{n+1}),
\eeq 
from \eqref{new_r_BDF}.
Adding the above two equations together, we could have
\begin{subequations}
\begin{align}
& \frac{1}{4}[({\phi}^{n+1},  (\cL+\gamma_0 I ){\phi}^{n+1})+(2{\phi}^{n+1}-{\phi}^{n},  (\cL+\gamma_0 I)({\phi}^{n+1}-{\phi}^{n}))]+({q}^{n+1})^2+(2{q}^{n+1}-{q}^{n})^2 \notag \\
&\qquad -\frac{1}{4}[({\phi}^{n},  (\cL+\gamma_0 I ){\phi}^{n})+(2{\phi}^{n}-{\phi}^{n-1},  (\cL+\gamma_0 I)({\phi}^{n}-{\phi}^{n-1}))]-({q}^{n})^2-(2{q}^{n}-{q}^{n-1})^2\notag \\
& \qquad\qquad\qquad\qquad \qquad\qquad\qquad\qquad \qquad\qquad \le- {\delta t}(1-\eta) (\cG \mu^{n+1},\mu^{n+1})\le0,\notag 
\end{align}
\end{subequations}
since $1-\eta \ge 0$. This completes the proof.
\end{proof}

\begin{scheme}[Second-order RSAV-CN Scheme] \label{scheme:RSAV-CN}
We update $\phi^{n+1}$ via the following two steps:
\begin{itemize}
\item Step 1. Calculate the intermediate solution ($\phi^{n+1}$, $\tilde{q}^{n+1}$) using the baseline SAV-CN method as below.
\begin{subequations}\label{eq:RSAV-CN_step1}
\begin{align}
& \frac{\phi^{n+1}-\phi^n}{\delta t} = - \cG \mu^{n+\frac{1}{2}}, \\
& \mu^{n+\frac{1}{2}} = \cL\phi^{n+\frac{1}{2}} + \gamma_0 \phi^{n+\frac{1}{2}} + \frac{ \tilde{q}^{n+\frac{1}{2}}}{Q(\overline{\phi}^{n+\frac{1}{2}})} V(\overline{\phi}^{n+\frac{1}{2}}),\\
& \frac{ \tilde{q}^{n+1} - q^{n}}{\delta t} = \int_\Omega \frac{ V(\overline{\phi}^{n+\frac{1}{2}})}{2Q(\overline{\phi}^{n+\frac{1}{2}})} \frac{ \phi^{n+1}-\phi^n}{\delta t}  d\bx.
\end{align}
\end{subequations}
\item Step 2. Update the scalar auxiliary variable $q^{n+1}$ as
\beq \label{eq:RSAV-CN-step2}
q^{n+1} = \xi_0 \tilde{q}^{n+1} + (1-\xi_0) Q(\phi^{n+1}), \quad  \xi_0 \in \cV,
\eeq
with the feasible set $\cV$ defined as $\cV=\cV_1\cap \cV_2$, where 
\begin{subequations}
\begin{align}
& \cV_1= \{\xi | \xi \in [0, 1] \}, \\
& \cV_2 = \Big\{ \xi | ({q}^{n+1})^2- (\tilde{q}^{n+1})^2  \le  {\delta t} \eta  (\cG\mu^{n+\frac{1}{2}},\mu^{n+\frac{1}{2}}), \quad q^{n+1} = \xi \tilde{q}^{n+1} + (1-\xi) Q(\phi^{n+1}) \Big\}.
\end{align}
\end{subequations}
\end{itemize}
\end{scheme}

Similarly, we have the following critical observations for Scheme \ref{scheme:RSAV-CN}.
\begin{rem}
The set $\cV$ in \eqref{eq:RSAV-CN-step2} is non-empty, given that $1 \in \cV$.
\end{rem}
\begin{rem}
The Scheme \ref{scheme:RSAV-CN} is second-order accurate in time. Because the relaxation step in \eqref{eq:RSAV-CN-step2} doesn't not affect the order of accuracy. Notice $\tilde{q}^{n+1} = Q(\phi(\bx, t_{n+1})) + O(\delta t^2)$ and $Q(\Phi^{n+1}) = Q(\phi(\bx, t_{n+1})) + O(\delta t^2)$. Hence 
$$
q^{n+1} =  \xi_0 \tilde{q}^{n+1} + (1-\xi_0) Q(\phi^{n+1})= Q(\phi(\bx, t_{n+1})) + O(\delta t^2).
$$
\end{rem}
\begin{rem}[Optimal choice for $\xi_0$]
Here we elaborate the optimal choice for the relaxation parameter $\xi_0$.  We can choose $\xi_0$ as the solution of the following optimization problem
\beq \label{eq:optimization-CN}
\xi_0 = \min_{\xi \in [0, 1]} \xi, \quad \mbox{ s.t.  }   (q^{n+1})^2 - (\tilde{q}^{n+1})^2 \leq \delta t \eta (\mu^{n+\frac{1}{2}}, \cG\mu^{n+\frac{1}{2}}).
\eeq 
This can be simplified as
\beq
\xi_0 = \min_{\xi \in [0, 1]} \xi, \quad \mbox{ s.t.  }  a\xi^2 + b\xi + c \leq 0,
\eeq 
where the coefficients are
\begin{subequations}
\begin{align}
& a = ( \tilde{q}^{n+1} - Q(\phi^{n+1}))^2, \quad b = 2\big(\tilde{q}^{n+1}-Q(\phi^{n+1})\big)Q(\phi^{n+1}), \\
& c = [Q(\phi^{n+1})]^2 - (\tilde{q}^{n+1})^2 - \delta t \eta  (\mu^{n+\frac{1}{2}}, \cG\mu^{n+\frac{1}{2}}).
\end{align}
\end{subequations}
Notice $a+b+c<0$. Given $a \neq 0$, the solution to \eqref{eq:optimization-CN} is given as 
$$
\xi_0 = \max\{0, \frac{-b -\sqrt{b^2-4ac}}{2a}\}.
$$
\end{rem}


\begin{theorem}
The Scheme \ref {scheme:RSAV-CN} is unconditionally energy stable.
\end{theorem}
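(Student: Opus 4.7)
The approach would closely parallel the proof already carried out for Scheme \ref{scheme:RSAV-BDF2}, with the SAV-BDF2 building block replaced by its Crank-Nicolson counterpart. The structure of Scheme \ref{scheme:RSAV-CN} is identical in spirit: Step 1 is one step of the baseline SAV-CN method applied to $(\phi^n, q^n)$, producing the intermediate pair $(\phi^{n+1}, \tilde{q}^{n+1})$, and Step 2 is a convex combination of $\tilde{q}^{n+1}$ and $Q(\phi^{n+1})$ subject to the quadratic dissipation constraint encoded in $\cV_2$. So the same two-ingredient strategy should apply: quote the baseline discrete energy law for the intermediate variable, then patch in the relaxation inequality to swap $\tilde{q}^{n+1}$ for the true $q^{n+1}$.

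Concretely, I would first invoke Theorem \ref{Theo:SAV-CN} applied to the Step 1 system \eqref{eq:RSAV-CN_step1}, which produces the modified energy identity
$$\frac{1}{2}(\phi^{n+1}, (\cL+\gamma_0 I)\phi^{n+1}) + (\tilde{q}^{n+1})^2 - \frac{1}{2}(\phi^{n}, (\cL+\gamma_0 I)\phi^{n}) - (q^{n})^2 = -\delta t\,(\cG \mu^{n+\frac{1}{2}}, \mu^{n+\frac{1}{2}}).$$
Then I would use the defining constraint of $\cV_2$, namely $(q^{n+1})^2 - (\tilde{q}^{n+1})^2 \le \delta t\, \eta\,(\cG \mu^{n+\frac{1}{2}}, \mu^{n+\frac{1}{2}})$, which is guaranteed by the choice $\xi_0 \in \cV$ in \eqref{eq:RSAV-CN-step2}. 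Adding these two relations eliminates $\tilde{q}^{n+1}$ in favor of $q^{n+1}$ and collapses the right-hand side to $-\delta t\,(1-\eta)\,(\cG \mu^{n+\frac{1}{2}}, \mu^{n+\frac{1}{2}})$.

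To finish, I would observe that $\eta \in [0,1]$ gives $1-\eta \ge 0$ and that $\cG$ is semi-positive definite, so the right-hand side is non-positive, which is exactly unconditional energy stability for the relaxed modified energy. The only auxiliary point worth noting is that the relaxation step is well-posed because $\cV$ is non-empty (as already remarked, $1 \in \cV$), so a valid $\xi_0$ always exists. I do not expect a genuine obstacle here: the constraint in $\cV_2$ was designed precisely so that it slots into the SAV-CN energy identity without spoiling the sign on the right, and the argument reduces to adding two inequalities.
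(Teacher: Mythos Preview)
Your proposal is correct and follows essentially the same approach as the paper's own proof: invoke Theorem \ref{Theo:SAV-CN} for Step 1 to obtain the energy identity in terms of $\tilde{q}^{n+1}$, add the relaxation constraint from $\cV_2$ to replace $\tilde{q}^{n+1}$ by $q^{n+1}$, and conclude using $1-\eta\ge 0$ together with the semi-positive definiteness of $\cG$.
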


\begin{proof}
For the step in \ref{eq:RSAV-CN_step1}, thanks to the Theorem \ref{Theo:SAV-CN}, we could get
\beq  \label{eq:RSAV-CN-proof-1}
\frac{1}{2}({\phi}^{n+1},  (\cL+\gamma_0 I ){\phi}^{n+1})+(\tilde{q}^{n+1})^2-\frac{1}{2}({\phi}^{n},  (\cL+\gamma_0 I ){\phi}^{n})-({q}^{n})^2=- {\delta t}  (\cG\mu^{n+\frac{1}{2}},\mu^{n+\frac{1}{2}}).
\eeq 
From \eqref{eq:RSAV-CN-step2}, we know
\beq \label{eq:RSAV-CN-proof-2}
({q}^{n+1})^2-(\tilde{q}^{n+1})^2\le  {\delta t} \eta  (\cG\mu^{n+\frac{1}{2}},\mu^{n+\frac{1}{2}}).
\eeq 

Adding two equations \eqref{eq:RSAV-CN-proof-1} and \eqref{eq:RSAV-CN-proof-2}, and acknowledging the inequality $1-\eta \ge 0$, we could arrive at
\begin{subequations}
\begin{align}
&\frac{1}{2}({\phi}^{n+1},  (\cL+\gamma_0 I ){\phi}^{n+1})+({q}^{n+1})^2-\frac{1}{2}({\phi}^{n},  (\cL+\gamma_0 I ){\phi}^{n})-({q}^{n})^2\notag \\
& \qquad\qquad\qquad\qquad \qquad\qquad\qquad\qquad \qquad\qquad \le- {\delta t}(1-\eta)  (\cG\mu^{n+\frac{1}{2}},\mu^{n+\frac{1}{2}})\le0.\notag 
\end{align}
\end{subequations}
 This completes the proof.
\end{proof}

\subsection{The relaxed MSAV approach}
In a similar manner, we introduce the relaxation technique to the MASV method to fix the inconsistency issues between $q_i(t)$ and $Q_i(\phi)$ after discretization. The two second-order MSAV schemes could be improved as follows.

\begin{scheme}[Second-order RMSAV-BDF2 Scheme]  \label{scheme:RMSAV-BDF2}
We update $\phi^{n+1}$ via the following two steps:
\begin{itemize}
\item Step 1, Calculate the intermediate solution ($\phi^{n+1}$, $\tilde{q}^{n+1}$) using the MSAV-BDF2 method as below.
\begin{subequations}\label{eq:RMSAV-BDF2_step1}
\begin{align}
& \frac{3 \phi^{n+1}-4\phi^n+\phi^{n-1}}{2\delta t}  = - \cG \mu^{n+1}, \\
& \mu^{n+1}=   \cL\phi^{n+1} + \sum_{i=1}^k \gamma_i  \phi^{n+1} + \sum_{i}^{k}\frac{\tilde{q_i}^{n+1}}{Q_i(\overline{\phi}^{n+1})} V_i(\overline{\phi}^{n+1}), \\
& \frac{ 3\tilde{q_j}^{n+1} - 4q_j^{n}+q_j^{n-1}}{2\delta t} = \int_\Omega  \frac{V_j(\overline{\phi}^{n+1})}{2Q_j(\overline{\phi}^{n+1})} \frac{3 \phi^{n+1}-4\phi^n+\phi^{n-1}}{2\delta t}  d\bx, \quad j=1,2,\cdots, k,
\end{align}
\end{subequations}
where $\overline{\phi}^{n+1}=\frac{3}{2}{\phi}^{n}-\frac{1}{2}{\phi}^{n-1}$ and $V_i(\overline{\phi}^{n+1})=F_i'(\overline{\phi}^{n+1})-\gamma_i \overline{\phi}^{n+1}$, $i=1,2,\cdots, k$.
\item Step 2, update the $k$ scalar auxiliary variables as 
\beq\label{RMSAV_new_r_BDF}
q_i^{n+1} = \xi_0 \tilde{q_i}^{n+1} + (1-\xi_0) Q_i(\phi^{n+1}), \quad i=1,2,\cdots, k, \quad \xi_0 \in \cV,
\eeq
where the feasible set $\cV$ is defined as $\cV=\cV_1\cap \cV_2$ with
\begin{subequations}
\begin{align}
& \cV_1=\{\xi| \xi \in [0, 1] \}, \\
& \cV_2 = \Big\{ \xi | \sum_{i=1}^k \Big[  ({q}_i^{n+1})^2+(2{q}_i^{n+1}-{q}_i^{n})^2-((\tilde{q}_i^{n+1})^2+(2\tilde{q}_i^{n+1}-{q}_i^{n})^2) \Big] \nonumber \\
&   \le  {\delta t} \eta  (\cG\mu^{n+1},\mu^{n+1}), \quad q_i^{n+1} = \xi \tilde{q}_i^{n+1} + (1-\xi) Q(\phi^{n+1}), i=1,2,\cdots,k \Big\}, \quad \eta \in [0, 1].
\end{align}
\end{subequations}
\end{itemize}

\begin{rem}[Optimal choice for $\xi_0$]
Similarly, we can propose an optimal choice for the relaxation parameter $\xi_0$ as the solution of an optimization problem. And in the end, 
$$
\xi_0 = \max\{0, \frac{-b -\sqrt{b^2-4ac}}{2a}\},
$$
with the coefficients given by
\begin{subequations}
\begin{align}
&a=5\sum_{i=1}^{k}(\tilde{q_i}^{n+1}-Q_i(\phi^{n+1}))^2,\notag \\
&b=\sum_{i=1}^{k}2(\tilde{q_i}^{n+1}-Q_i(\phi^{n+1}))(5Q_i(\phi^{n+1})-2q_i^n),\notag \\
&c=\sum_{i=1}^{k}[(Q_i(\phi^{n+1}))^2+(2Q_i(\phi^{n+1})-q_i^n)^2-(\tilde{q_i}^{n+1})^2-(2\tilde{q_i}^{n+1}-q_i^n)^2]-{\delta t}\eta  (\cG\mu^{n+1},\mu^{n+1}).\notag 
\end{align}
\end{subequations}
\end{rem}

\end{scheme}
\begin{theorem}
The Scheme \ref{scheme:RMSAV-BDF2} is unconditionally energy stable.
\end{theorem}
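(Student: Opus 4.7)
The plan is to mirror the two-step argument already used for the baseline RSAV-BDF2 scheme, promoting every single-index quantity to a sum over $i=1,\dots,k$. The structure of Scheme \ref{scheme:RMSAV-BDF2} is designed precisely so that Theorem \ref{Theo:MSAV-BDF} applies verbatim to the intermediate solution $(\phi^{n+1},\tilde{q}_i^{n+1})$ produced in Step 1, and the feasible set $\cV_2$ in Step 2 is tailored exactly so that adding the two estimates yields dissipation.

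First, I would invoke Theorem \ref{Theo:MSAV-BDF} on the Step 1 system \eqref{eq:RMSAV-BDF2_step1}, since that system is algebraically identical to the MSAV-BDF2 update with the auxiliary variables at the new level denoted $\tilde{q}_i^{n+1}$ rather than $q_i^{n+1}$. This gives the discrete energy inequality
\begin{align*}
& \tfrac{1}{4}\bigl[({\phi}^{n+1},(\cL+\textstyle\sum_{i=1}^k\gamma_i I){\phi}^{n+1})+(2{\phi}^{n+1}-{\phi}^{n},(\cL+\sum_{i=1}^k\gamma_i I)(2{\phi}^{n+1}-{\phi}^{n}))\bigr] \\
& \quad +\sum_{i=1}^{k}\bigl[(\tilde{q}_i^{n+1})^2+(2\tilde{q}_i^{n+1}-q_i^{n})^2\bigr] \\
& \quad -\tfrac{1}{4}\bigl[({\phi}^{n},(\cL+\textstyle\sum_{i=1}^k\gamma_i I){\phi}^{n})+(2{\phi}^{n}-{\phi}^{n-1},(\cL+\sum_{i=1}^k\gamma_i I)(2{\phi}^{n}-{\phi}^{n-1}))\bigr] \\
& \quad -\sum_{i=1}^{k}\bigl[(q_i^{n})^2+(2q_i^{n}-q_i^{n-1})^2\bigr] \le -\delta t\,(\cG\mu^{n+1},\mu^{n+1}).
\end{align*}

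Next, I would exploit the fact that, by construction, $\xi_0\in\cV=\cV_1\cap\cV_2$ is non-empty (the choice $\xi=1$ yields $q_i^{n+1}=\tilde{q}_i^{n+1}$, trivially satisfying both constraints since the left-hand side of the $\cV_2$ inequality vanishes and the right-hand side is non-negative). Hence the defining inequality of $\cV_2$ gives
\begin{equation*}
\sum_{i=1}^{k}\bigl[(q_i^{n+1})^2+(2q_i^{n+1}-q_i^{n})^2-(\tilde{q}_i^{n+1})^2-(2\tilde{q}_i^{n+1}-q_i^{n})^2\bigr]\le \delta t\,\eta\,(\cG\mu^{n+1},\mu^{n+1}).
\end{equation*}

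Finally, adding these two estimates eliminates the intermediate variables $\tilde{q}_i^{n+1}$ and produces
\begin{align*}
& \tfrac{1}{4}\bigl[({\phi}^{n+1},(\cL+\textstyle\sum_{i=1}^k\gamma_i I){\phi}^{n+1})+(2{\phi}^{n+1}-{\phi}^{n},(\cL+\sum_{i=1}^k\gamma_i I)(2{\phi}^{n+1}-{\phi}^{n}))\bigr] \\
& \quad +\sum_{i=1}^{k}\bigl[(q_i^{n+1})^2+(2q_i^{n+1}-q_i^{n})^2\bigr] -\text{(the same quantity at step $n$)} \le -\delta t(1-\eta)(\cG\mu^{n+1},\mu^{n+1}),
\end{align*}
which, since $\eta\in[0,1]$ and $\cG$ is semi-positive definite, is $\le 0$. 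There is no real obstacle; the only thing to be careful about is verifying that $\cV$ is non-empty so that the relaxation step is well-defined and the $\cV_2$ inequality is actually a property of $q_i^{n+1}$, which follows from the trivial observation $1\in\cV$ noted after Scheme \ref{scheme:RSAV-BDF2} and applied here in the multi-variable setting.
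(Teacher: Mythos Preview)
Your proposal is correct and follows essentially the same route as the paper: invoke Theorem \ref{Theo:MSAV-BDF} on the Step 1 output $(\phi^{n+1},\tilde{q}_i^{n+1})$, extract the $\cV_2$ inequality from the relaxation step, and add the two estimates to obtain the $-(1-\eta)\,\delta t\,(\cG\mu^{n+1},\mu^{n+1})\le 0$ bound. The only minor addition you make is the explicit non-emptiness check for $\cV$, which the paper handles separately as a remark rather than inside the proof.
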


\begin{proof}
For the first step of the scheme \ref{scheme:RMSAV-BDF2}, according to the Theorem \ref{Theo:MSAV-BDF}, We could get
\begin{subequations}
\begin{align}
&  \frac{1}{4}[({\phi}^{n+1},  (\cL+ \sum_{i=1}^k \gamma_i  I ){\phi}^{n+1})+(2{\phi}^{n+1}-{\phi}^{n},  (\cL+ \sum_{i=1}^k \gamma_i I)(2{\phi}^{n+1}-{\phi}^{n}))]\notag\\
&\quad  -\frac{1}{4}[({\phi}^{n},  (\cL+ \sum_{i=1}^k \gamma_i  I ){\phi}^{n})+(2{\phi}^{n}-{\phi}^{n-1},  (\cL+ \sum_{i=1}^k \gamma_i  I)(2{\phi}^{n}-{\phi}^{n-1}))]\notag\\
& \qquad\qquad\quad +\sum_{i=1}^{k}[({\tilde{q}_i}^{n+1})^2+(2{\tilde{q}_i}^{n+1}-{q_i}^{n})^2] - \sum_{i=1}^{k}[({q_i}^{n})^2+(2{q_i}^{n}-{q_i}^{n-1})^2]   \notag\\
& \qquad\qquad\qquad \qquad\qquad\qquad\qquad \qquad\qquad\qquad\quad  \le- {\delta t}  (\cG\mu^{n+1},\mu^{n+1}).\notag
\end{align}
\end{subequations}
At the same time, we know from \eqref{RMSAV_new_r_BDF}, we could know
\beq
\sum_{i=1}^{k}[({q_i}^{n+1})^2+(2{q_i}^{n+1}-{q}^{n})^2]-\sum_{i=1}^{k}[({\tilde{q}_i}^{n+1})^2+(2{\tilde{q}_i}^{n+1}-{q_i}^{n})^2]\le  {\delta t} \eta  (\cG\mu^{n+1},\mu^{n+1}).
\eeq 
Adding the above two equations together, we could have
\begin{subequations}
\begin{align}
&  \frac{1}{4}[({\phi}^{n+1},  (\cL+ \sum_{i=1}^k \gamma_i I ){\phi}^{n+1})+(2{\phi}^{n+1}-{\phi}^{n},  (\cL+ \sum_{i=1}^k \gamma_i  I)({\phi}^{n+1}-{\phi}^{n}))]\notag\\
&\quad  -\frac{1}{4}[({\phi}^{n},  (\cL+ \sum_{i=1}^k \gamma_i  I ){\phi}^{n})+(2{\phi}^{n}-{\phi}^{n-1},  (\cL+ \sum_{i=1}^k \gamma_i I)({\phi}^{n}-{\phi}^{n-1}))]\notag\\
& \qquad\qquad +\sum_{i=1}^{k}[({{q}_i}^{n+1})^2+(2{{q}_i}^{n+1}-{q_i}^{n})^2] - \sum_{i=1}^{k}[({q_i}^{n})^2+(2{q_i}^{n}-{q_i}^{n-1})^2]   \notag\\
& \qquad\qquad\qquad\qquad \qquad\qquad\qquad\qquad \quad \le- {\delta t}(1-\eta)  (\cG\mu^{n+1},\mu^{n+1})\le0, \notag 
\end{align}
\end{subequations}
by using the fact $1-\eta \ge 0$. This completes the proof.

\end{proof}

Then, if we use the semi-implicit CN time discretization, we can obtain the the following second-order scheme.
\begin{scheme}[Second-order RMSAV-CN Scheme] \label{scheme:RMSAV-CN}
We update $\phi^{n+1}$ via the following two steps:
\begin{itemize}
\item Step 1. Calculate the intermediate solution
\begin{subequations}\label{eq:RMSAV-CN_step1}
\begin{align}
& \frac{\phi^{n+1}-\phi^n}{\delta t} = - \cG \mu^{n+\frac{1}{2}}, \\
& \mu^{n+\frac{1}{2}} = \cL\phi^{n+\frac{1}{2}} + \sum_{i=1}^k \gamma_i \phi^{n+\frac{1}{2}} + \sum_{i=1}^{k}\frac{ \tilde{q}_i^{n+\frac{1}{2}}}{Q_i(\overline{\phi}^{n+\frac{1}{2}})} V_i(\overline{\phi}^{n+\frac{1}{2}}),\\
& \frac{ \tilde{q}_j^{n+1} - q_j^{n}}{\delta t} = \int_\Omega \frac{ V_j(\overline{\phi}^{n+\frac{1}{2}})}{2Q_j(\overline{\phi}^{n+\frac{1}{2}})} \frac{ \phi^{n+1}-\phi^n}{\delta t}  d\bx, \quad j=1,2,\cdots,k.
\end{align}
\end{subequations}
\item Step 2. Update the scalar auxiliary variable as
\beq \label{eq:RMSAV-CN-step2}
{q_i}^{n+1} = \xi_0 {\tilde{q}_i}^{n+1} + (1-\xi_0) Q_i(\phi^{n+1}), \quad i=1,2,\cdots, k, \quad \xi_0 \in \cV,
\eeq
with the feasible set $V$ defined as $\cV=\cV_1\cap \cV_2$, where 
\begin{subequations}
\begin{align}
& \cV_1= \{\xi | \xi \in [0, 1] \}, \\
& \cV_2 = \Big\{ \xi | \sum_{i=1}^k \Big[ ({q}_i^{n+1})^2- (\tilde{q}_i^{n+1})^2 \Big]  \le  {\delta t} \eta  (\cG\mu^{n+\frac{1}{2}},\mu^{n+\frac{1}{2}}), \nonumber \\
& \qquad \qquad q_i^{n+1} = \xi \tilde{q}_i^{n+1} + (1-\xi) Q(\phi^{n+1}), \quad i=1,2,\cdots,k \Big\}, \quad \eta \in [0, 1].
\end{align}
\end{subequations}
\end{itemize}
\end{scheme}

\begin{rem}
Similarly, the optimal choice for the relaxation parameter can be calculated as $\xi_0 = \max\{0, \frac{-b -\sqrt{b^2-4ac}}{2a}\}$ with the coefficients given by
\begin{subequations}
\begin{align}
& a = \sum_{i=1}^{k}( {\tilde{q}_i}^{n+1} - Q_i(\phi^{n+1}))^2, \quad b =\sum_{i=1}^{k} 2\big({\tilde{q}_i}^{n+1}-Q_i(\phi^{n+1})\big)Q_i(\phi^{n+1}), \\
& c = \sum_{i=1}^{k} [Q_i(\phi^{n+1})]^2 - \sum_{i=1}^{k}({\tilde{q}_i}^{n+1})^2 - \delta t \eta  (\mu^{n+\frac{1}{2}}, \cG\mu^{n+\frac{1}{2}}).
\end{align}
\end{subequations}
\end{rem}


\begin{theorem}
The Scheme \ref {scheme:RMSAV-CN} is unconditionally energy stable.
\end{theorem}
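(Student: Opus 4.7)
The plan is to mirror the structure of the three preceding energy-stability proofs (for Schemes \ref{scheme:RSAV-BDF2}, \ref{scheme:RSAV-CN}, and \ref{scheme:RMSAV-BDF2}), since the RMSAV-CN scheme is the natural combination of the MSAV-CN update with the relaxation post-processing that has already been analyzed. The argument will be a straightforward two-line combination: an energy identity for the Step~1 intermediate variables, plus the defining inequality of the feasible set $\cV_2$, added together and simplified via $1-\eta\ge 0$.

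First I would treat Step~1 of Scheme \ref{scheme:RMSAV-CN} as a standard MSAV-CN update in which the role of $q_i^{n+1}$ is played by the intermediate value $\tilde{q}_i^{n+1}$. Applying Theorem \ref{Theo:MSAV-CN} verbatim to the system \eqref{eq:RMSAV-CN_step1} then yields the identity
\begin{equation}
\tfrac{1}{2}({\phi}^{n+1},(\cL+\sum_{i=1}^{k}\gamma_i I){\phi}^{n+1}) + \sum_{i=1}^{k}(\tilde{q}_i^{n+1})^{2} - \tfrac{1}{2}({\phi}^{n},(\cL+\sum_{i=1}^{k}\gamma_i I){\phi}^{n}) - \sum_{i=1}^{k}(q_i^{n})^{2} = -\delta t\,(\cG\mu^{n+\frac{1}{2}},\mu^{n+\frac{1}{2}}).\notag
\end{equation}

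Next I would invoke the Step~2 relaxation: by construction in \eqref{eq:RMSAV-CN-step2}, the chosen $\xi_0$ lies in $\cV=\cV_1\cap\cV_2$, so the defining inequality of $\cV_2$ gives
\begin{equation}
\sum_{i=1}^{k}\bigl[(q_i^{n+1})^{2} - (\tilde{q}_i^{n+1})^{2}\bigr] \le \delta t\,\eta\,(\cG\mu^{n+\frac{1}{2}},\mu^{n+\frac{1}{2}}).\notag
\end{equation}
Adding this to the Step~1 identity cancels the $\tilde{q}_i^{n+1}$ terms and replaces them with $q_i^{n+1}$, producing
\begin{equation}
\tfrac{1}{2}({\phi}^{n+1},(\cL+\sum_{i=1}^{k}\gamma_i I){\phi}^{n+1}) + \sum_{i=1}^{k}(q_i^{n+1})^{2} - \tfrac{1}{2}({\phi}^{n},(\cL+\sum_{i=1}^{k}\gamma_i I){\phi}^{n}) - \sum_{i=1}^{k}(q_i^{n})^{2} \le -\delta t\,(1-\eta)(\cG\mu^{n+\frac{1}{2}},\mu^{n+\frac{1}{2}}).\notag
\end{equation}
Since $\eta\in[0,1]$ so that $1-\eta\ge 0$, and $\cG$ is semi-positive definite, the right-hand side is non-positive, which is the desired modified-energy dissipation inequality.

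I do not anticipate any real obstacle: the non-emptiness of $\cV$ (because $\xi=1$ trivially satisfies both $\cV_1$ and $\cV_2$) ensures Step~2 is well-defined, and the only subtlety worth flagging is that the relaxation step does not propagate errors backward into Step~1, so Theorem \ref{Theo:MSAV-CN} applies cleanly to $\tilde{q}_i^{n+1}$ even though the stored history $q_i^n$ came from a previous relaxed update. The proof is therefore essentially a transcription of the RMSAV-BDF2 argument with the CN energy identity substituted in.
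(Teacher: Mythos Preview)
Your proposal is correct and follows essentially the same argument as the paper: invoke Theorem~\ref{Theo:MSAV-CN} for Step~1 with $\tilde{q}_i^{n+1}$ in place of $q_i^{n+1}$, add the defining inequality of $\cV_2$ from Step~2, and conclude via $1-\eta\ge 0$ and the semi-positive definiteness of $\cG$. Your version is in fact slightly cleaner than the paper's, which contains minor typos (a dropped factor of $\tfrac{1}{2}$ and a stray $\tilde{q}_i^{n+1}$ in the final displayed inequality).
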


\begin{proof}
For the first step of \eqref{eq:RMSAV-CN_step1}, due to the Theorem \ref{Theo:MSAV-CN}, we could get
\beq 
({\phi}^{n+1},  (\cL+\sum_{i=1}^k \gamma_i I ){\phi}^{n+1})+\sum_{i=1}^{k}({\tilde{q}_i}^{n+1})^2-({\phi}^{n},  (\cL+\sum_{i=1}^k \gamma_i I ){\phi}^{n})-\sum_{i=1}^{k}({{q}_i}^{n})^2 =- {\delta t}  (\cG\mu^{n+\frac{1}{2}},\mu^{n+\frac{1}{2}}).\notag
\eeq 
From \eqref{eq:RMSAV-CN-step2}, we know
\beq
\sum_{i=1}^{k}({{q}_i}^{n+1})^2-\sum_{i=1}^{k}({\tilde{q}_i}^{n+1})^2\le  {\delta t} \eta  (\cG\mu^{n+\frac{1}{2}},\mu^{n+\frac{1}{2}}).
\eeq 
Adding above two equations, and using $1-\eta \ge 0$, we could arrive at
\[({\phi}^{n+1},  (\cL+\sum_{i=1}^k \gamma_i I ){\phi}^{n+1})+\sum_{i=1}^{k}({\tilde{q}_i}^{n+1})^2-({\phi}^{n},  (\cL+\sum_{i=1}^k \gamma_i I ){\phi}^{n})-\sum_{i=1}^{k}({{q}_i}^{n})^2\le- {\delta t}(1-\eta)  (\cG\mu^{n+\frac{1}{2}},\mu^{n+\frac{1}{2}})\le 0.\]
This completes the proof.
\end{proof}

\section{Numerical results}
In this section, we implement the proposed numerical algorithms and apply them to several classical phase-field models that include the Allen-Cahn (AC) equation, the Cahn-Hilliard (CH) equation, the Molecular Beam Epitaxy (MBE) model, the phase-field crystal (PFC) model and the diblock copolymer model. 

For simplicity, we only consider the phase-field models with periodic boundary conditions. However, we emphasize that our proposed algorithms apply to other thermodynamically consistent boundary conditions that satisfy the energy dissipation laws. Given the periodic boundary conditions, we use the Fourier pseudo-spectral method for spatial discretization. Let $N_x,N_y$ be two positive even integers. The spatial domain $\Omega = [0,L_x]\times[0,L_y]$ is uniformly partitioned with mesh size $h_x = L_{x}/N_{x}, h_y = L_{y}/N_{y}$ and 
$$
\Omega_{h} =\left\{(x_{j},y_{k})|x_{j} = j h_x, y_{k} = kh_y,~0\leq j\leq
N_{x}-1,0\leq k\leq N_{y}-1\right\}.
$$
The details for spatial discretization are omitted. Interested readers can refer to our previous work \cite{ChenZhaoGongCICP2019}. Also, given the BDF2 and CN schemes are both second-order accurate, we only compare the baseline SAV-CN scheme and the RSAV-CN scheme in this paper.

\subsection{Allen-Cahn equation}
In the first example, we consider the Allen-Cahn equation. Consider the free energy
$\cE = \int_\Omega \frac{\varepsilon^2}{2}|\nabla \phi|^2 + \frac{1}{4} (\phi^2 -1)^2 d\bx$,  with mobility operator $\cG = 1$, the general gradient flow model in \eqref{eq:generic-model} reduces to the corresponding Allen-Cahn equation
\beq
\partial_t \phi = -\lambda (-\varepsilon^2 \Delta \phi + \phi^3 - \phi).
\eeq 
In the SAV formulation, we introduce the scalar auxiliary variable
$$
q(t):= Q(\phi(\bx,t)) = \sqrt{\int_\Omega\frac{1}{4} (\phi^2 - 1-\gamma_0)^2 d\bx + C}.
$$
Then the SAV reformulated equations read as
\begin{subequations} \label{eq:AC-SAV-2}
\begin{align}
&\partial_t \phi = -\lambda \Big[-\varepsilon^2 \Delta \phi + \gamma_0 \phi + \frac{q(t)}{Q(\phi)}V(\phi)\Big],  \quad V(\phi) = \phi(\phi^2 - 1-\gamma_0),\\
& \frac{d}{dt}q(t) = \int_\Omega \frac{ V(\phi)}{2Q(\phi)} \partial_t\phi d\bx.
\end{align}
\end{subequations}

We verify that the relaxed SAV-CN scheme is second-order accurate in time. Consider the domain $\Omega=[0, 1]^2$, and we pick the smooth initial condition 
\beq \label{eq:initial-condition}
\phi(x,y,t=0)=0.01 \cos(2\pi x) \cos(2\pi y),
\eeq
and set the model parameters: $\varepsilon=0.01$ and $\lambda=1$. To solve the AC equation in \eqref{eq:AC-SAV-2}, we use uniform meshes $N_x=N_y=128$, and numerical parameters $C_0=1$, $\eta =0.95$, and $\gamma_0=1$. 

Given the analytical solutions are unknown, we calculate the error as the difference between the numerical solutions using the current time step and the numerical solutions using the adjacent finer time step.  The  numerical errors in $L^2$ norm with various time steps are summarized in Figure \ref{fig:AC-mesh-refinement}. A second-order convergence for the numerical solutions of $\phi$ and $q$ are both observed.

\begin{figure}
\center
\subfigure[Time step refinement test for $\phi$]{\includegraphics[width=0.4\textwidth]{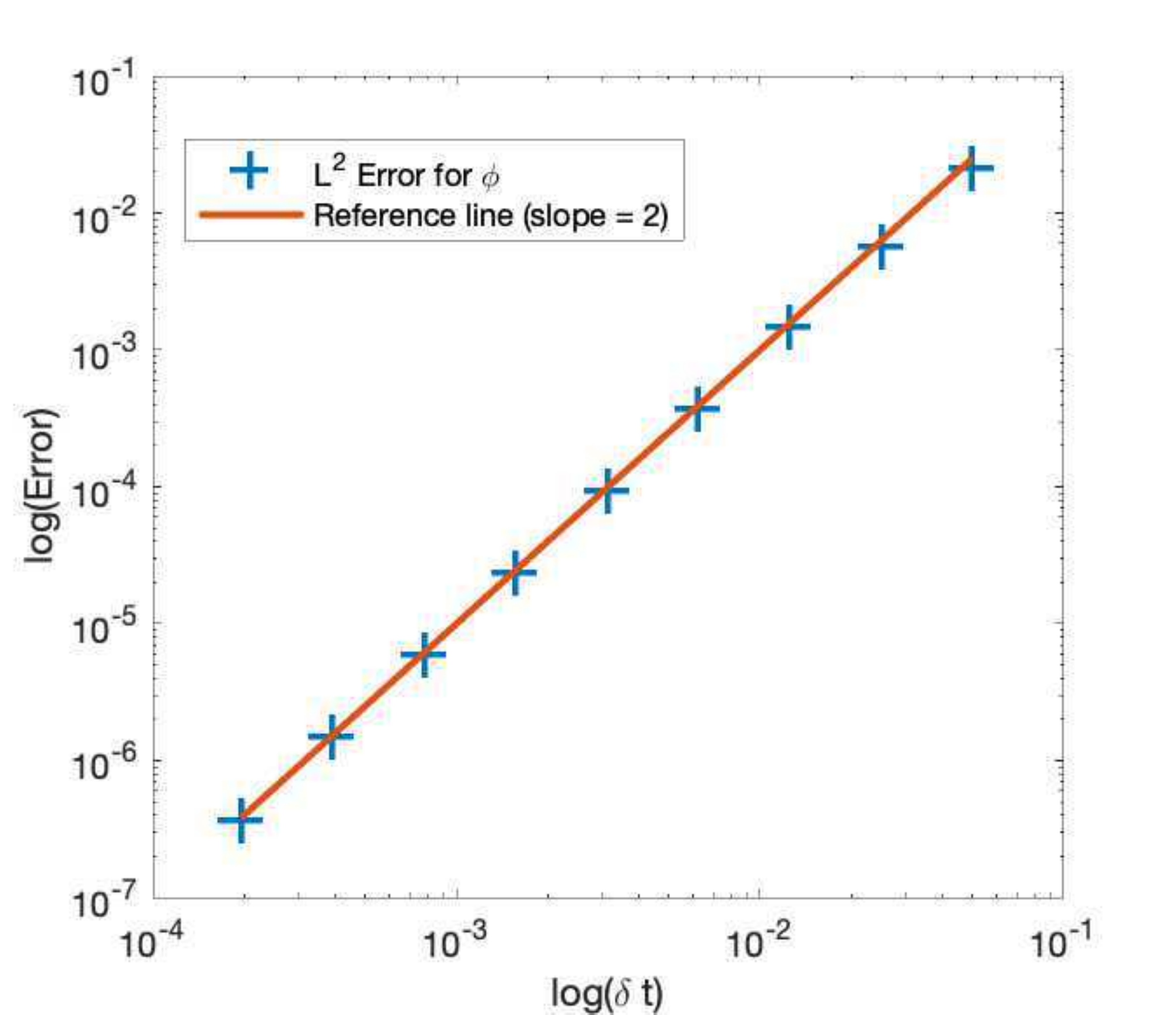}}
\subfigure[Time step refinement test for $q$]{\includegraphics[width=0.4\textwidth]{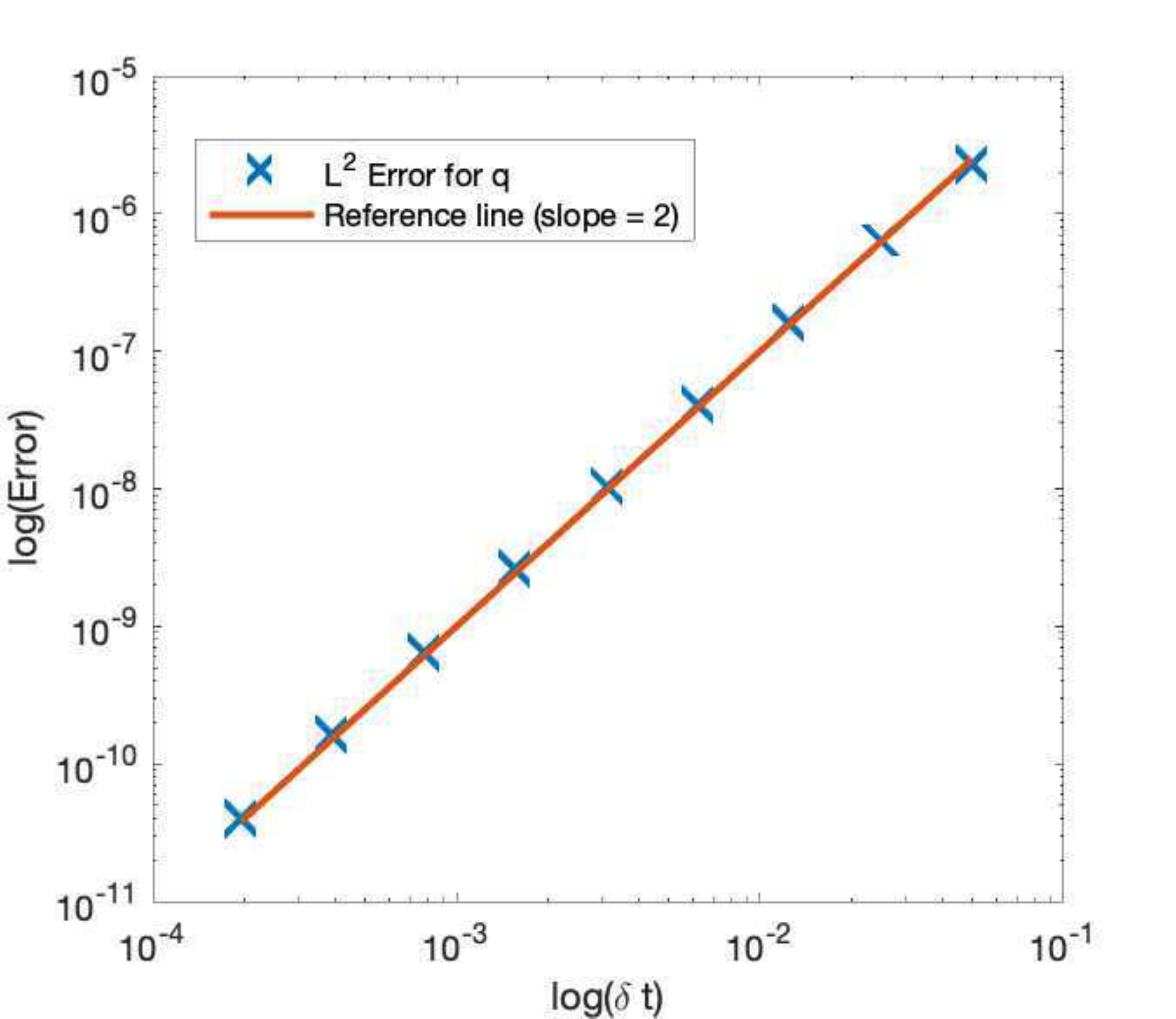}}
\caption{Time step mesh refinement tests of RSAV-CN scheme for solving the AC equation. This figure indicates that the proposed RSAV-CN algorithm is second-order accuracy in time when solving the AC equation.}
\label{fig:AC-mesh-refinement}
\end{figure}

Next, we conduct a detailed comparison between the baseline SAV-CN scheme and the RSAV-CN scheme. We use the same parameters with the example above. Consider the domain $\Omega=[0, L_x] \times[0, L_y]$,  and choose the initial condition as
\begin{subequations} \label{eq:initial-star}
\begin{align}
& \phi(x, y) = \tanh \frac{1.5 + 1.2 \cos(6\theta) - 2\pi r}{ \sqrt{2} \varepsilon}, \\
& \theta= \arctan \frac{y-0.5L_y}{x-0.5L_x}, \quad  r = \sqrt{ (x-\frac{L_x}{2})^2 + (y-\frac{L_y}{2})^2}.
\end{align}
\end{subequations}
In this example, we set $L_x=L_y=1$.
This initial condition represents a star shape in 2D, as shown in Figure \ref{fig:AC-star-example}(a). The comparison of calculated numerical energies for the AC equation is summarized in Figure \ref{fig:AC-compare}(a). We observe that the RSAV-CN method is more accurate than the baseline SAV-CN method, even with a larger time step. In addition, the numerical errors between $q^{n+1}$ and $Q(\phi^{n+1})$ are shown in Figure \ref{fig:AC-compare}(b). We observe that the RSAV method can effectively reduce the numerical errors between $q^{n+1}$ and $Q(\phi^{n+1})$. This is essential for preserving the consistency between the modified energy and the original energy after temporal discretization.

\begin{figure}
\center
\subfigure[Log-log plot for the energy evolution]{\includegraphics[width=0.4\textwidth]{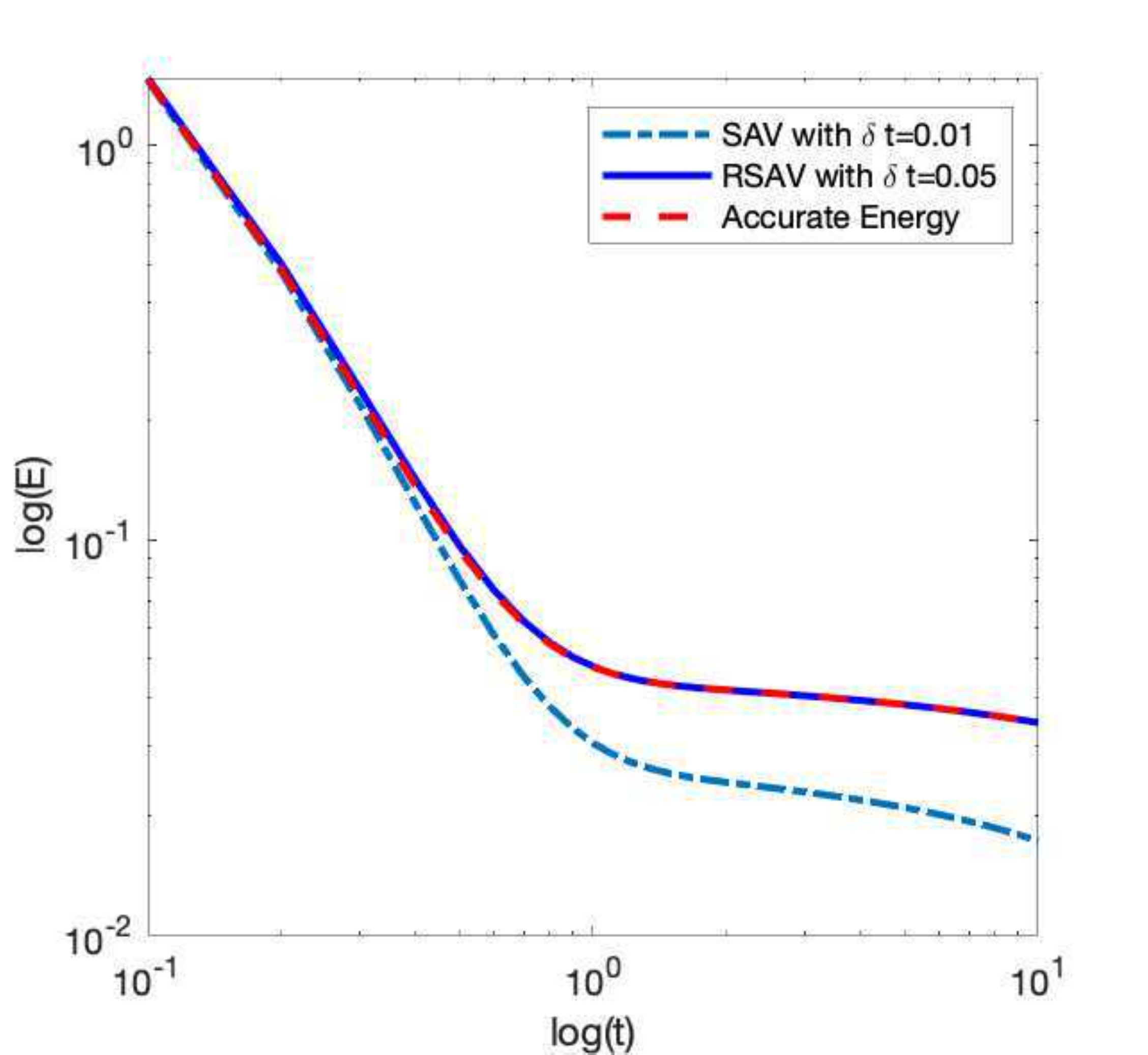}}
\subfigure[Numerical results of $q(t) - Q(\phi(\bx,t))$]{\includegraphics[width=0.4\textwidth]{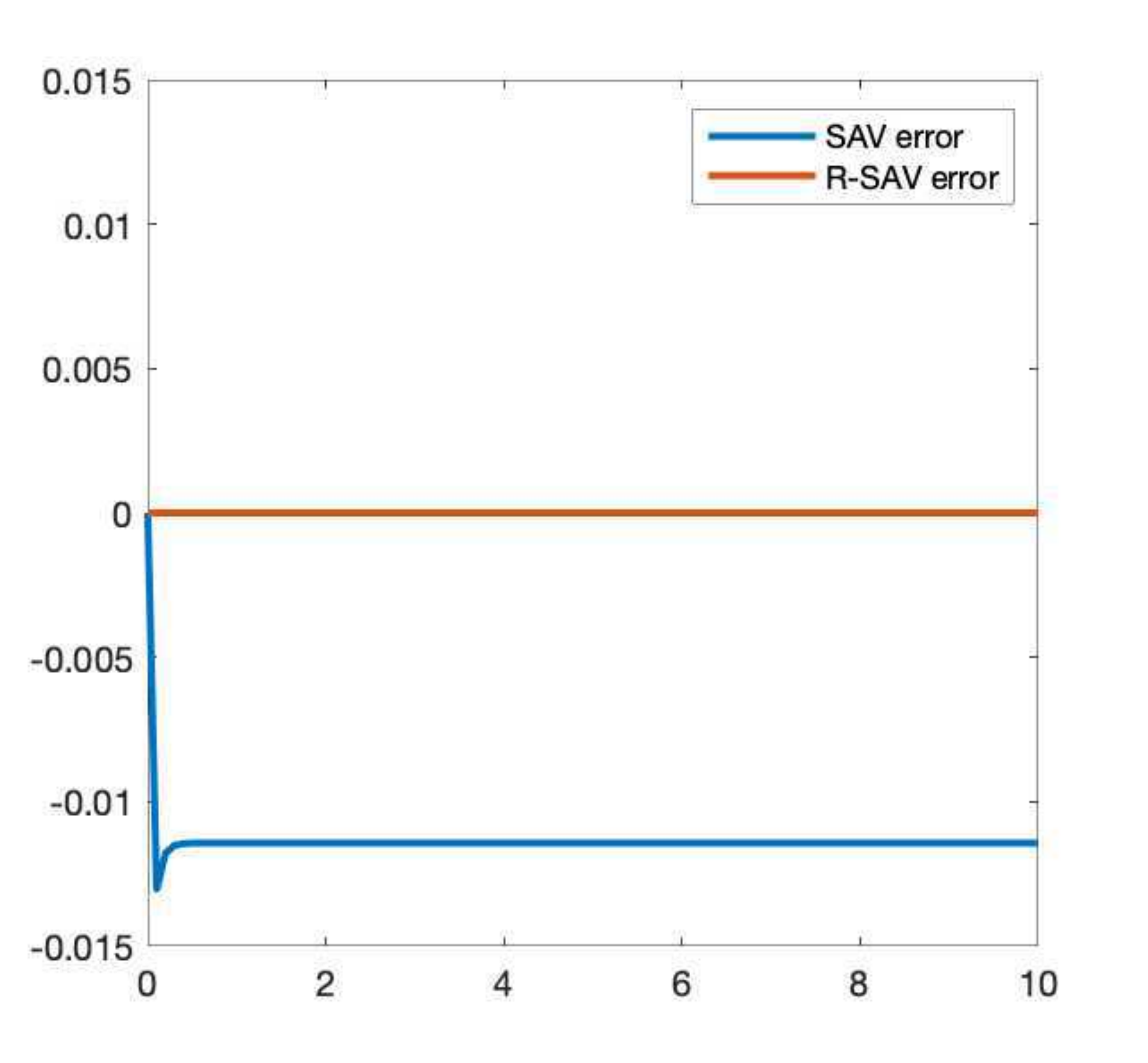}}
\caption{A comparison between the baseline SAV method and the relaxed SAV method in solving the Allen-Cahn equation. In (a) the numerical energies using the baseline SAV and the relaxed SAV are shown. The RSAV-CN scheme provides accurate result even with a much larger time step than the baseline SAV-CN scheme. In (b), the numerical results for $q(t)-Q(\phi(\bx,t))$ are shown, where we observe that the baseline SAV introduces numerical errors for $q(t)-Q(\phi(\bx,t))$, but the relaxed SAV method shows better consistency between $q^{n+1}$ and $Q(\phi^{n+1})$.}
\label{fig:AC-compare}
\end{figure}

Then long-time simulation is carried out using the RSAV-CN method with a time step $\delta t=0.01$.  The profiles of $\phi$ at various times are summarized in Figure \ref{fig:AC-star-example}. The star shape first relaxes to a round shape and keeps shrinking afterward. This is due to the Allen-Cahn model doesn't preserve the total volume of the phase variable. In the next section, we use the same initial condition to simulate dynamics driven by the Cahn-Hilliard equation, for which the total volume is preserved.

\begin{figure}
\center
\subfigure[Profiles of $\phi$ at $t=0, 10, 50$]{
\includegraphics[width=0.24\textwidth]{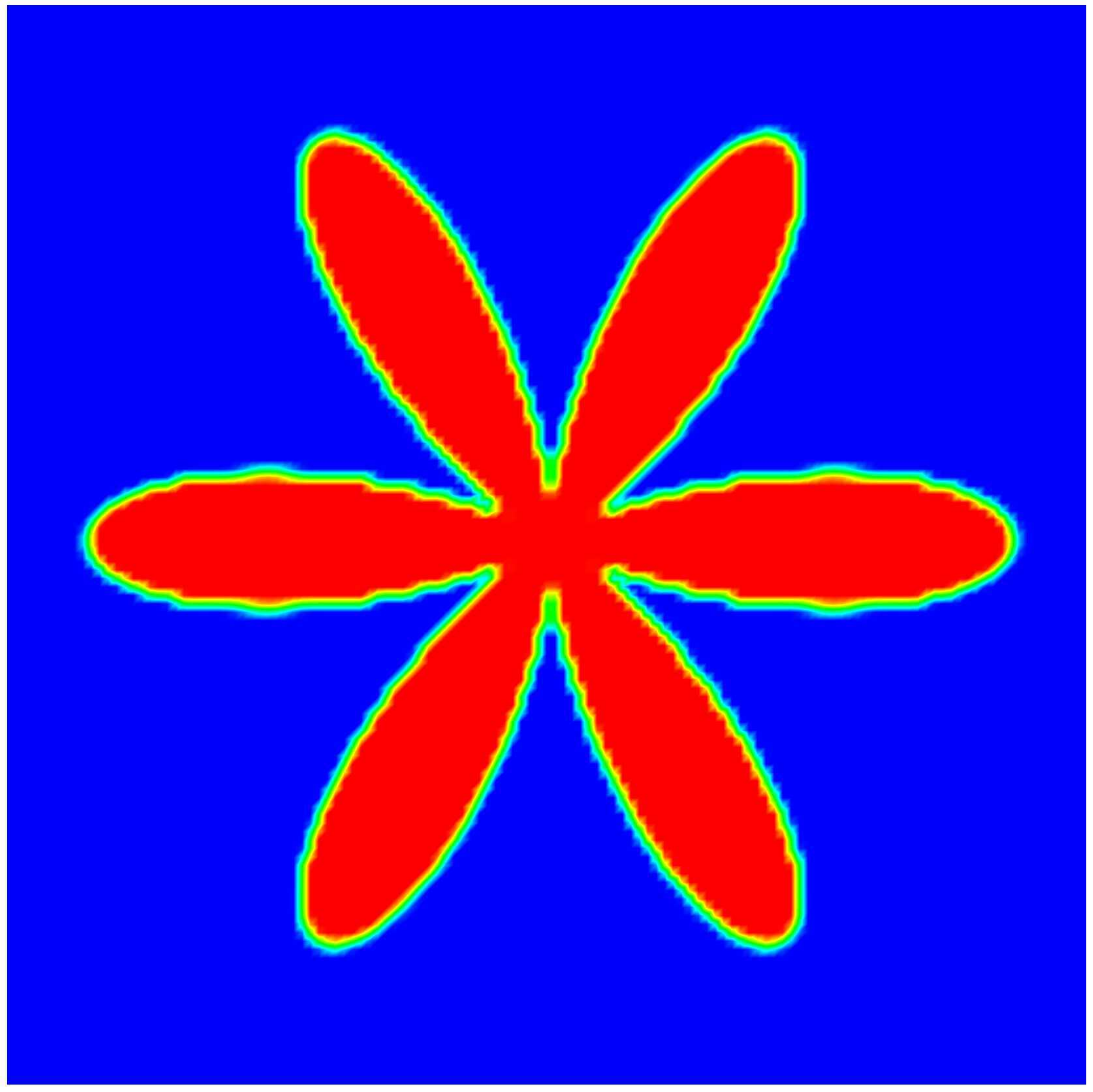}
\includegraphics[width=0.24\textwidth]{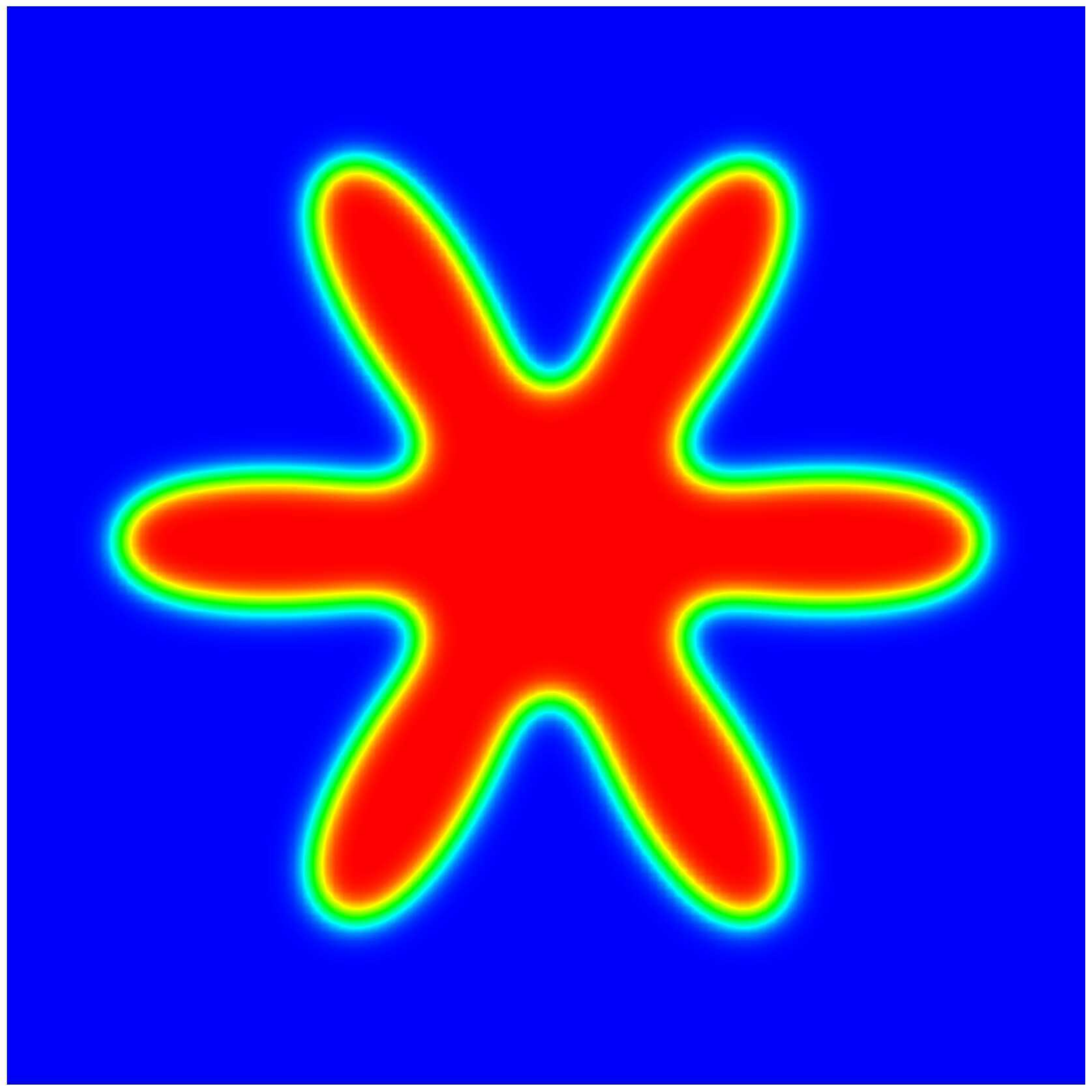}
\includegraphics[width=0.24\textwidth]{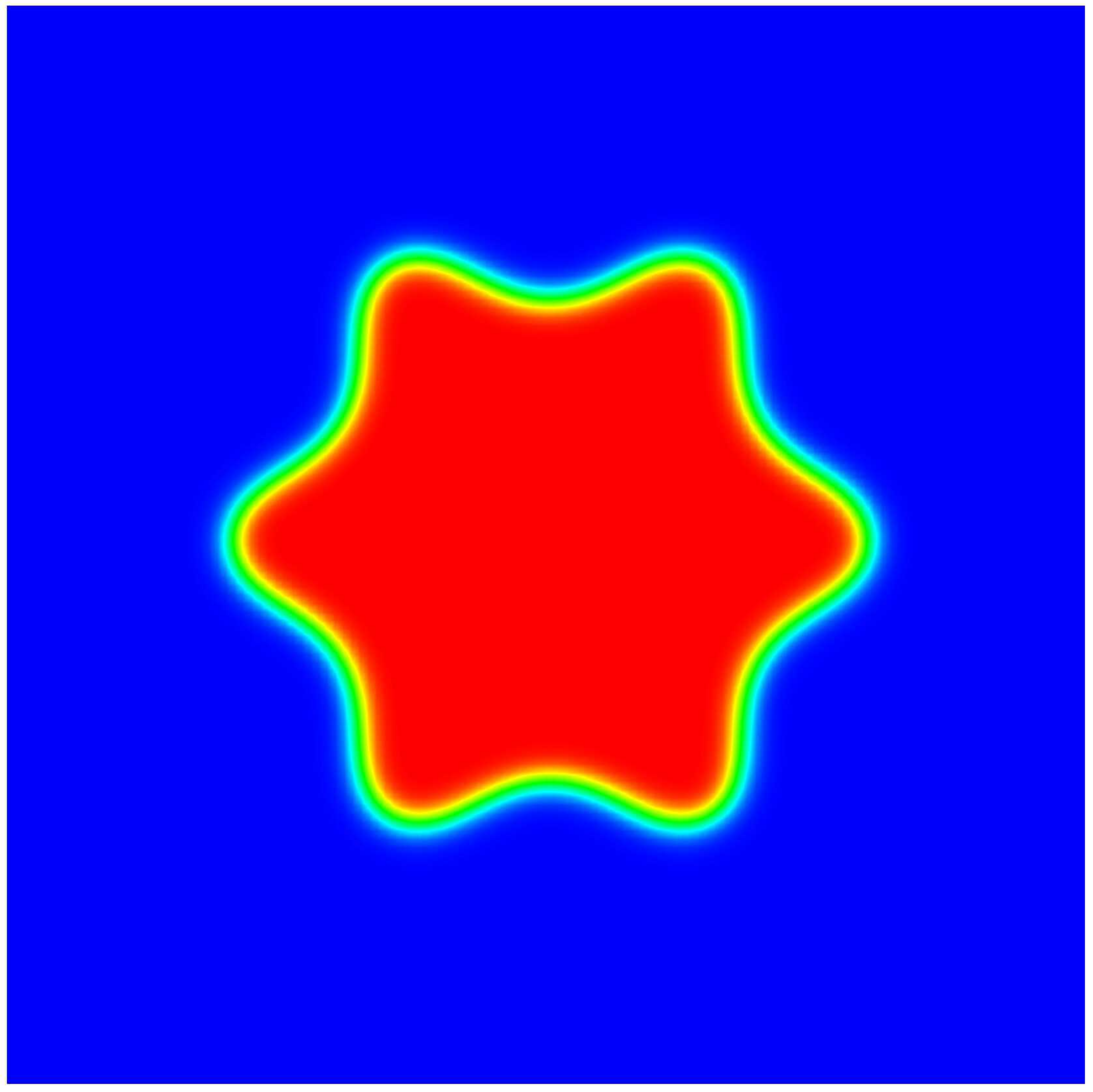}
}

\subfigure[Profiles of $\phi$ at $t=100, 150, 200$]{
\includegraphics[width=0.24\textwidth]{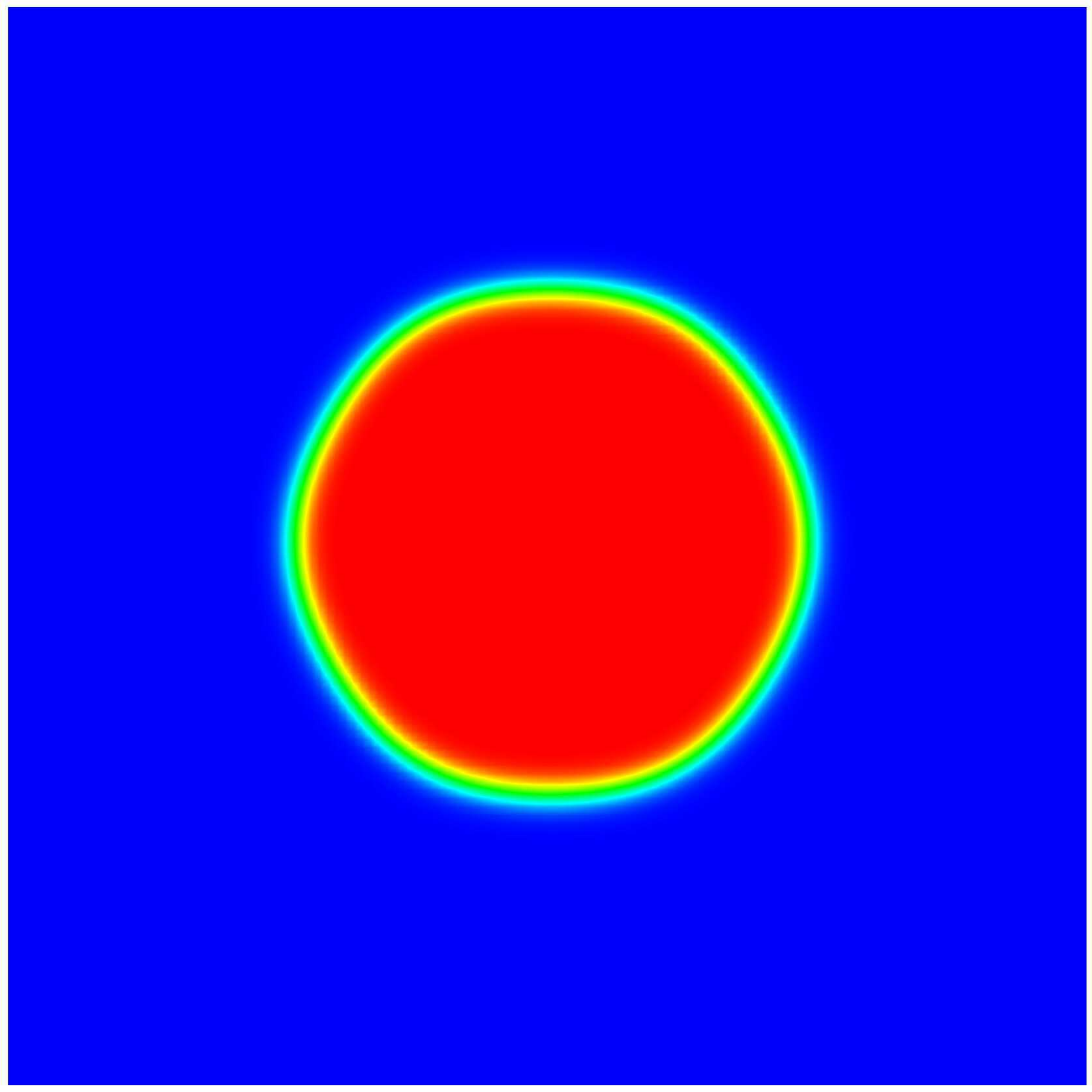}
\includegraphics[width=0.24\textwidth]{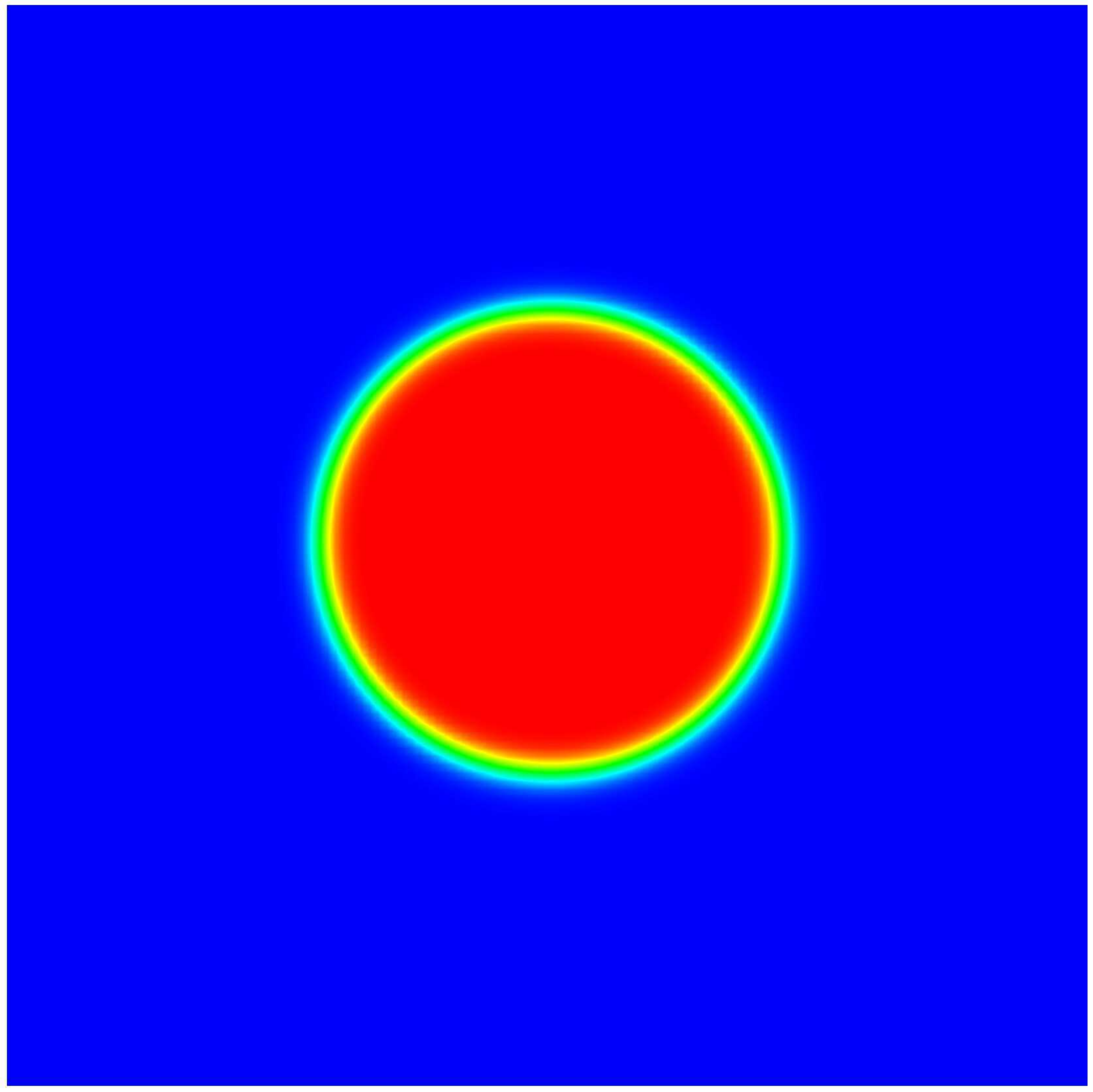}
\includegraphics[width=0.24\textwidth]{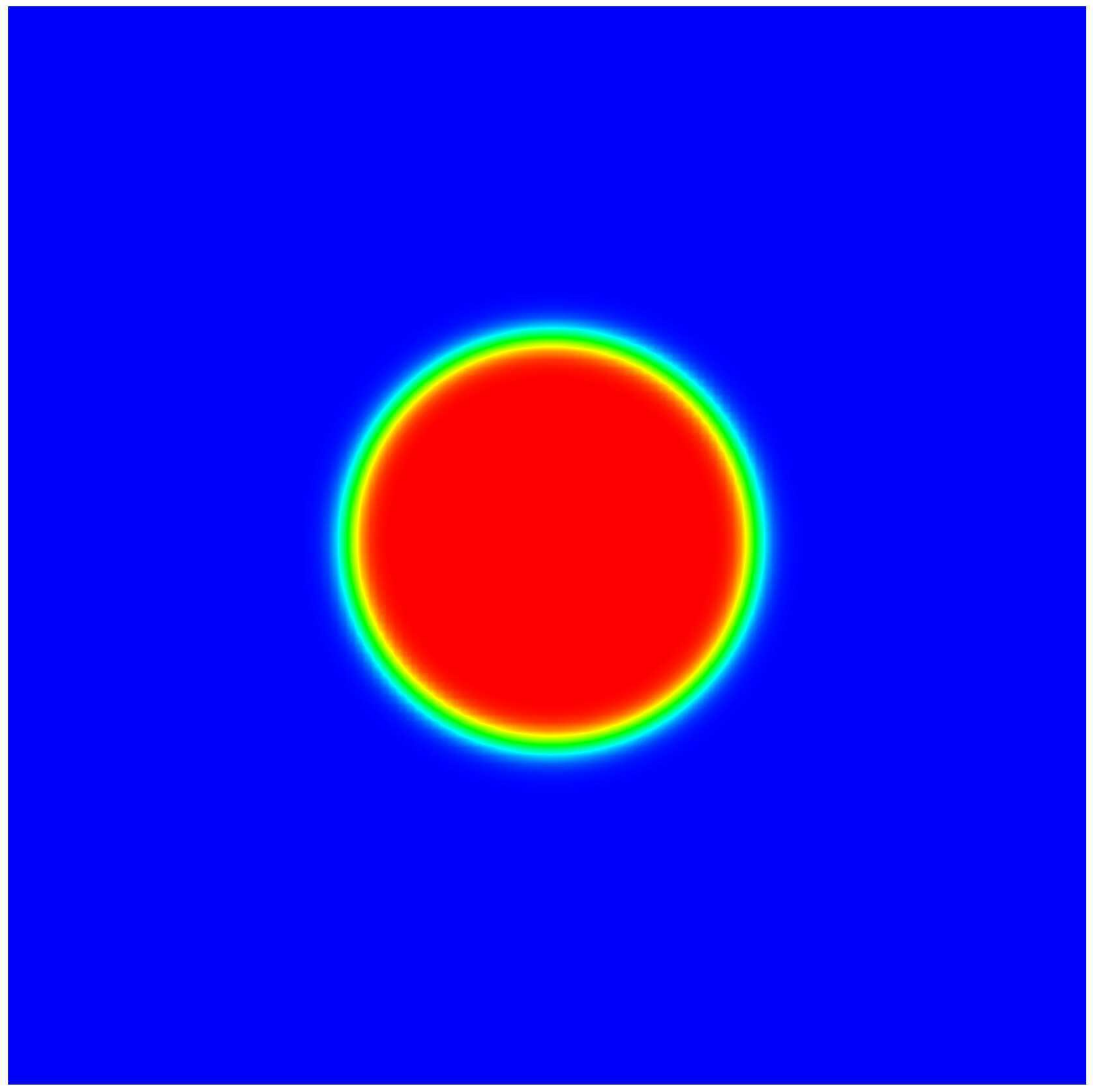}
}
\caption{Dynamics driven by the Allen-Cahn equation. The initial condition is a star-shaped profile in (a). And the profiles of $\phi$ at various times are shown.}
\label{fig:AC-star-example}
\end{figure}

\subsection{Cahn-Hilliard Equation}
In the second example, we consider the well-known Cahn-Hilliard equation with a double-well potential. Mainly, consider the free energy
$\cE = \int_\Omega \frac{\varepsilon^2}{2}|\nabla \phi|^2 + \frac{1}{4} (\phi^2 -1)^2 d\bx$, and mobility $\cG = -\lambda \Delta$. The general gradient flow model in \eqref{eq:generic-model} reduces to the corresponding  Cahn-Hilliard equation
\begin{subequations}
\begin{align}
&\partial_t \phi = \lambda \Delta \mu, \\
&\mu = - \varepsilon^2 \Delta \phi + \phi^3 - \phi.
\end{align}
\end{subequations}

In the SAV formulation, we introduce the scalar auxiliary variable
$$
q(t) : =Q(\phi(\bx, t))= \sqrt{\int_\Omega\frac{1}{4} (\phi^2 - 1-\gamma_0)^2 d\bx + C}.
$$
Then the reformulated equations are obtained as
\begin{subequations}
\begin{align}
&\partial_t \phi =\lambda \Delta \mu, \\
& \mu = - \varepsilon^2 \Delta \phi + \gamma_0 \phi + \frac{q(t)}{Q(\phi)}V(\phi), \quad V(\phi) = \phi(\phi^2-1-\gamma_0), \\
& \frac{d}{dt}q(t) = \int_\Omega \frac{ V(\phi)}{2Q(\phi)} \partial_t\phi d\bx.
\end{align}
\end{subequations}

First of all, we conduct the mesh refinement test to check the order of temporal convergence. We use the same initial condition as the AC case in \eqref{eq:initial-condition} for the time mesh refinement tests. We consider the domain $\Omega=[0, 1]^2$ and model parameters $\lambda=0.01$, $\varepsilon=0.01$. To solve the problem, we choose the numerical parameters $C_0=1$, $\gamma_0 =4$, $\eta = 0.95$, and $N_x=N_y=128$. Then we calculate the numerical solutions to $t=0.5$ with various time steps. The $L^2$ errors for numerical solutions (using strategies explained in the AC case) are calculated. The results are summarized in Figure \ref{fig:CH-mesh}. A second-order temporal convergence for both $\phi$ and $q$ is observed.

\begin{figure}
\center
\subfigure[Temporal mesh refinement test for $\phi$]{\includegraphics[width=0.4\textwidth]{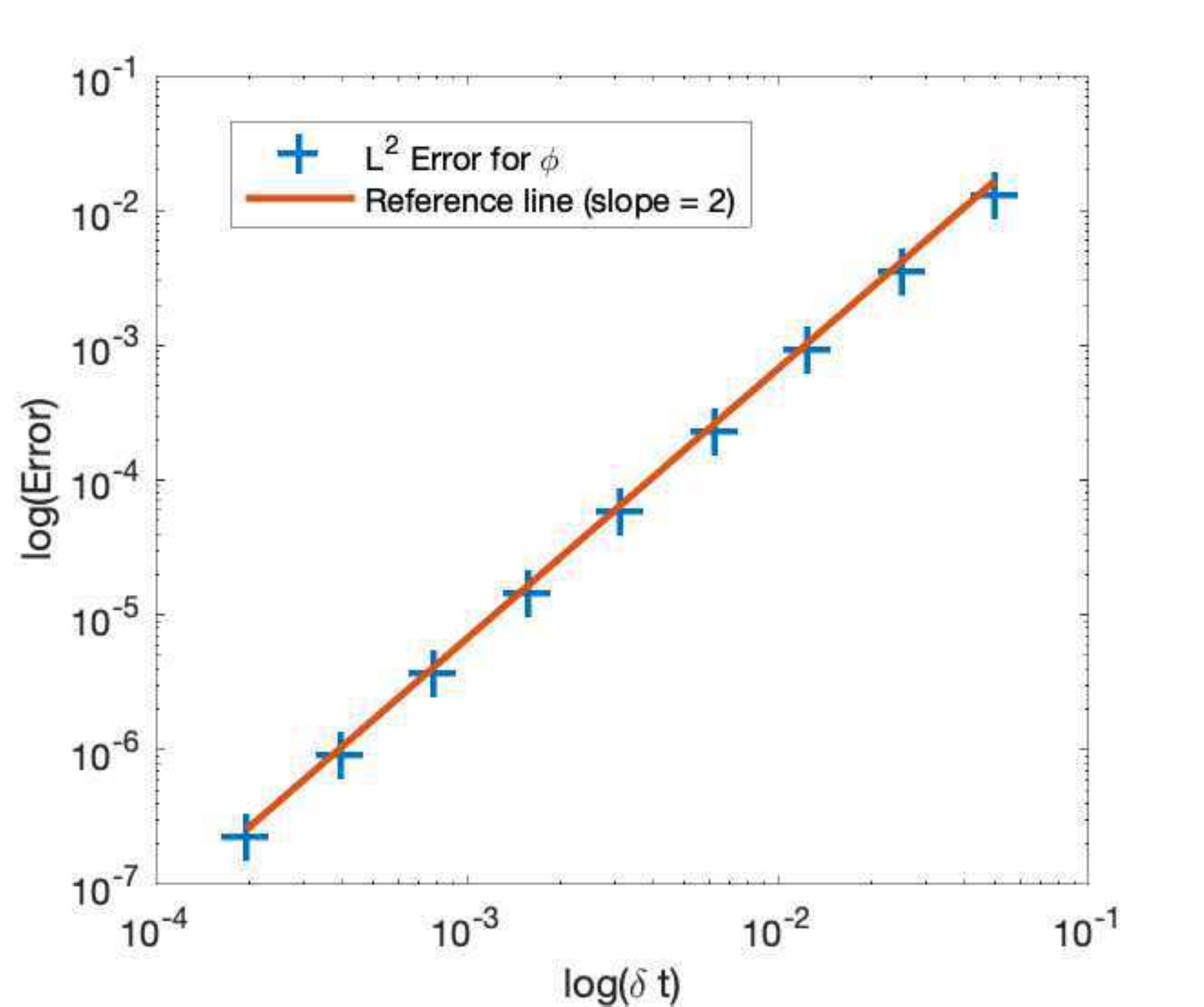}}
\subfigure[Temporal mesh refinement test for $q$]{\includegraphics[width=0.4\textwidth]{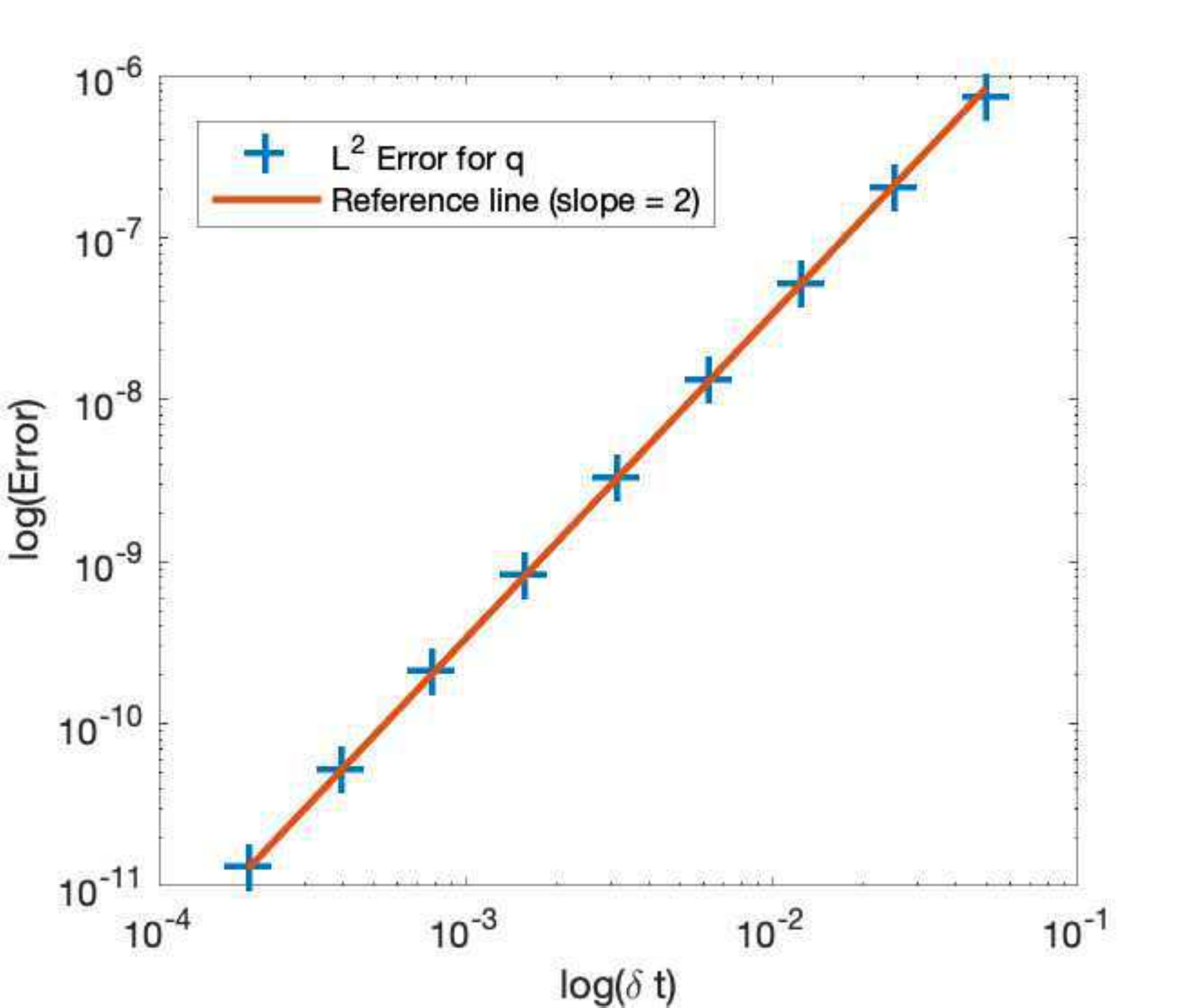}}
\caption{Time step mesh refinement tests of RSAV-CN method for solving the CH equation. This figure indicates that the proposed RSAV-CN algorithm is second-order accuracy in time when solving the CH equation.}
\label{fig:CH-mesh}
\end{figure}

After we verify that the RSAV-CN scheme is second-order accurate, we compare the accuracy of the baseline SAV-CN scheme and the RSAV-CN scheme for solving the Cahn-Hilliard equation. We use the same initial conditions as in \eqref{eq:initial-star}.  The model parameters used are $\lambda=0.1$, $\varepsilon=0.01$, $\gamma_0=4$. And to solve the problem, we choose the numerical parameters $C_0=1$, $\eta=0.95$, and $N_x=N_y=128$. The comparison of calculated energies are shown in Figure \ref{fig:CH-compare}(a), and the numerical errors for $q(t) - Q(\phi(\bx, t))$) using both the baseline SAV-CN scheme and the relaxed SAV-CN schemes are shown in Figure \ref{fig:CH-compare}(b). We observe that the relaxation step improves the accuracy significantly. In addition, the relaxation guarantees the consistency of numerical solution $q^{n+1}$ with its original definition $Q(\phi^{n+1})$, which indicates the numerical consistency of modified energy and the original energy.

\begin{figure}
\center
\subfigure[Log-log plot for the energy evolution]{\includegraphics[width=0.4\textwidth]{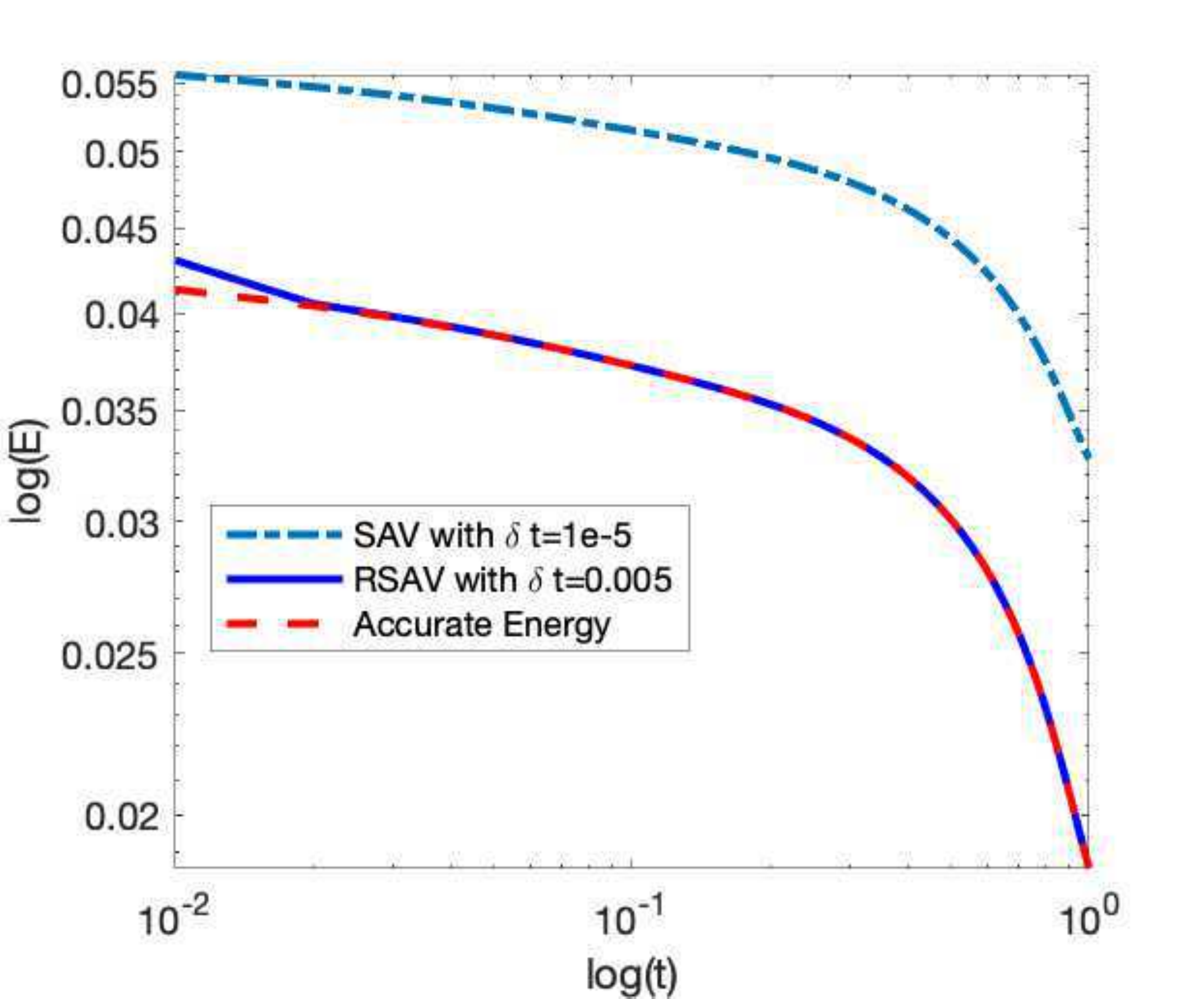}}
\subfigure[Numerical results of $q(t) - Q(\phi(\bx,t))$]{\includegraphics[width=0.4\textwidth]{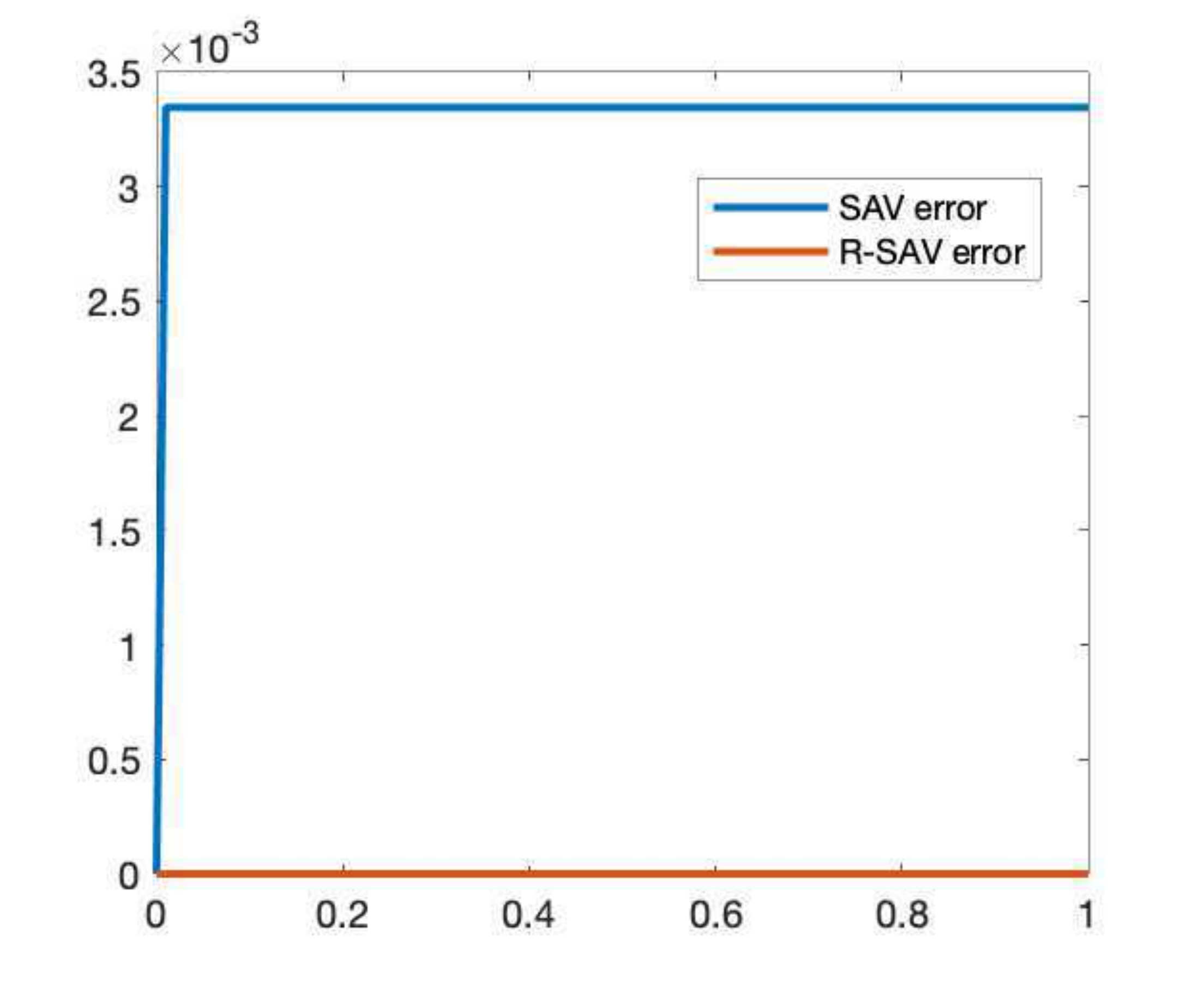}}
\caption{A comparison between the baseline SAV method and the relaxed SAV method for solving the Cahn-Hilliard model. In (a) the numerical energies using the baseline SAV and the relaxed SAV are shown. The RSAV method provides accurate result even with larger time step than the baseline SAV method. In (b), the numerical results for $q(t)-Q(\phi(\bx,t))$ are shown, where we observe that the baseline SAV introduces numerical errors for $q(t)-Q(\phi(\bx,t))$, but the relaxed SAV has properly relaxed the error close to 0.}
\label{fig:CH-compare}
\end{figure}

The profiles of $\phi$ at various time slots using the RSAV-CN scheme with a time step $\delta t=0.001$ are summarized in Figure \ref{fig:CH-example}. The initial profile of $\phi$ is a star shape shown in (a), and it smooths out into a disk.

\begin{figure}
\center
\subfigure[profiles of $\phi$ at $t=0,0.1, 0.5$]{
\includegraphics[width=0.24\textwidth]{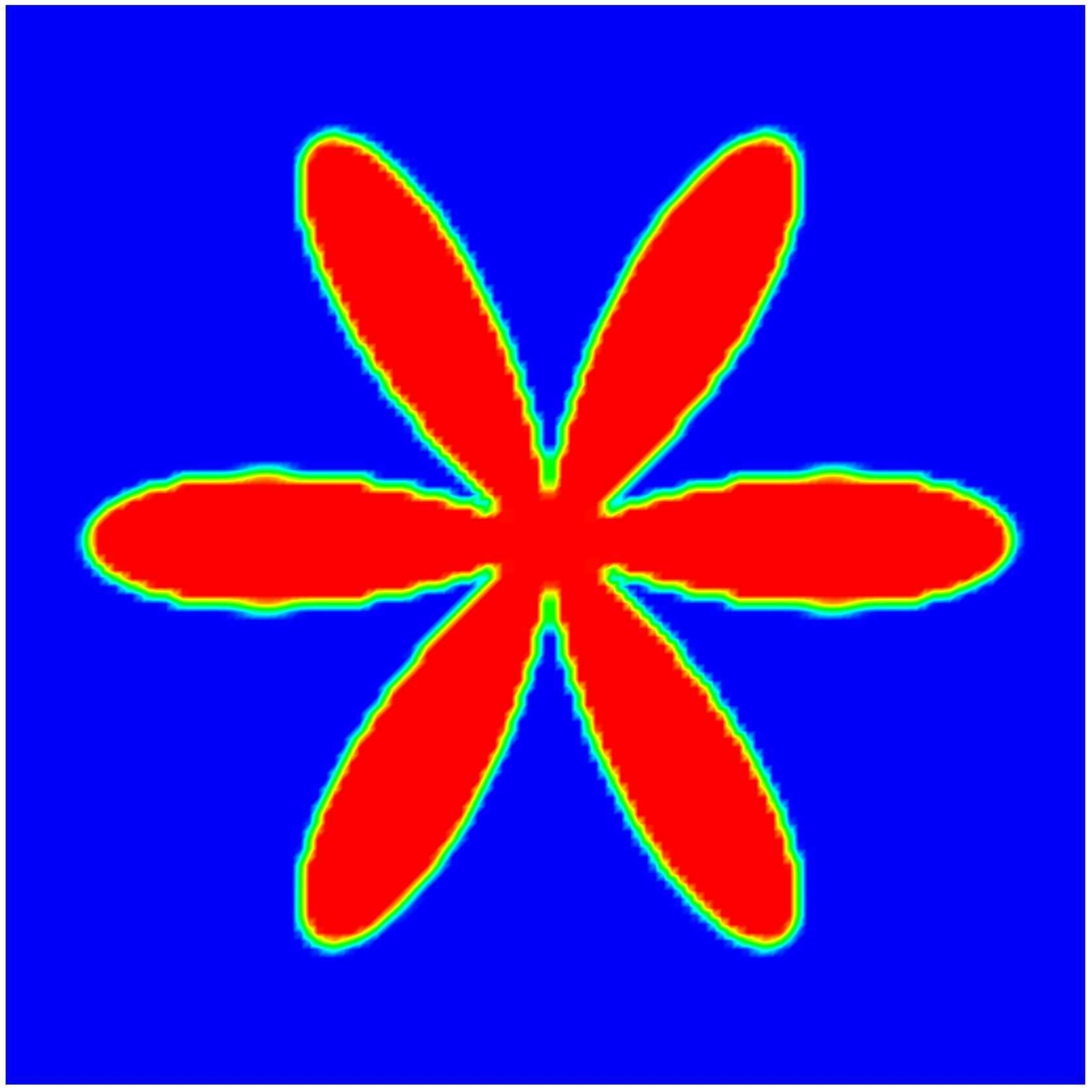}
\includegraphics[width=0.24\textwidth]{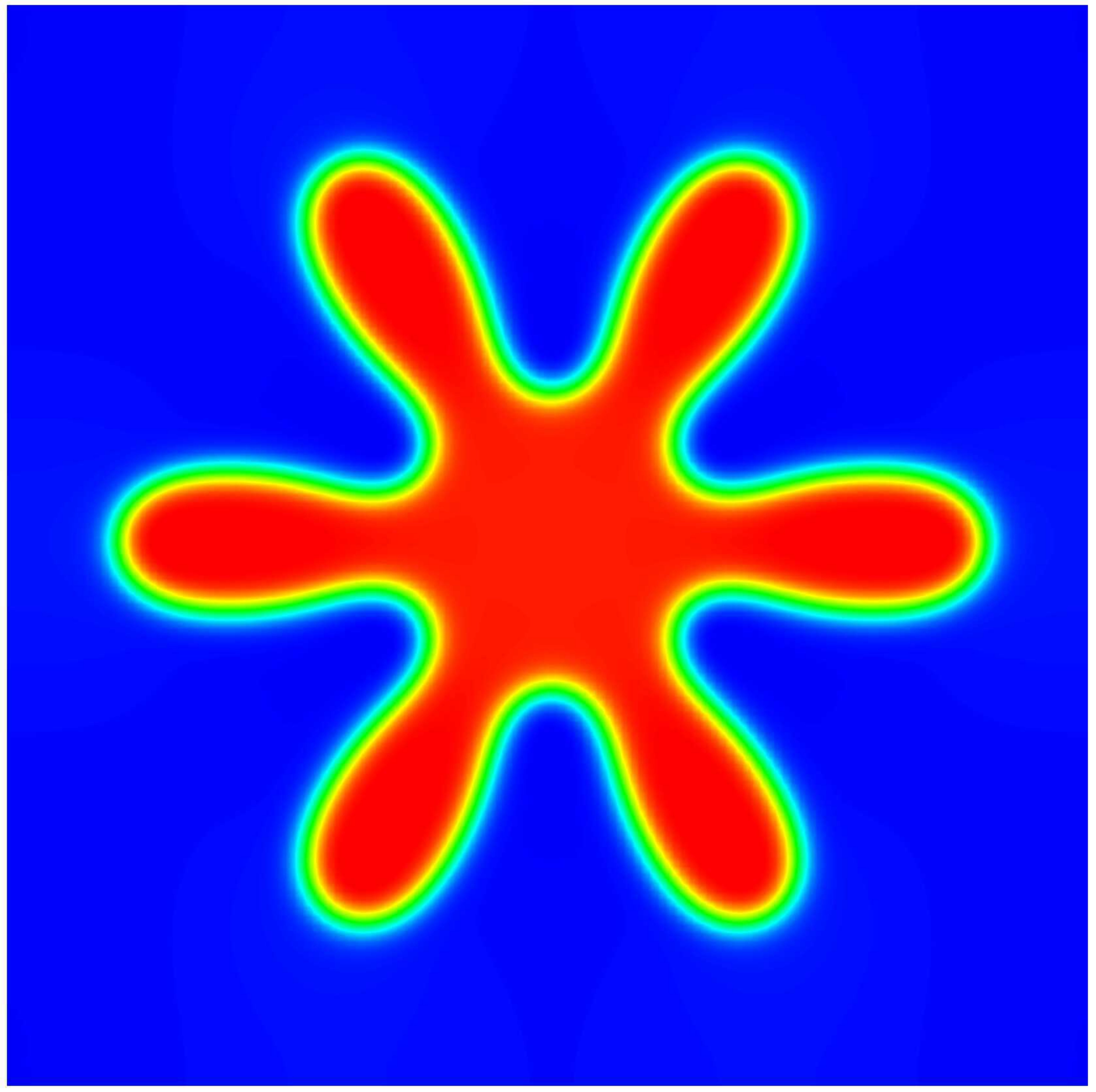}
\includegraphics[width=0.24\textwidth]{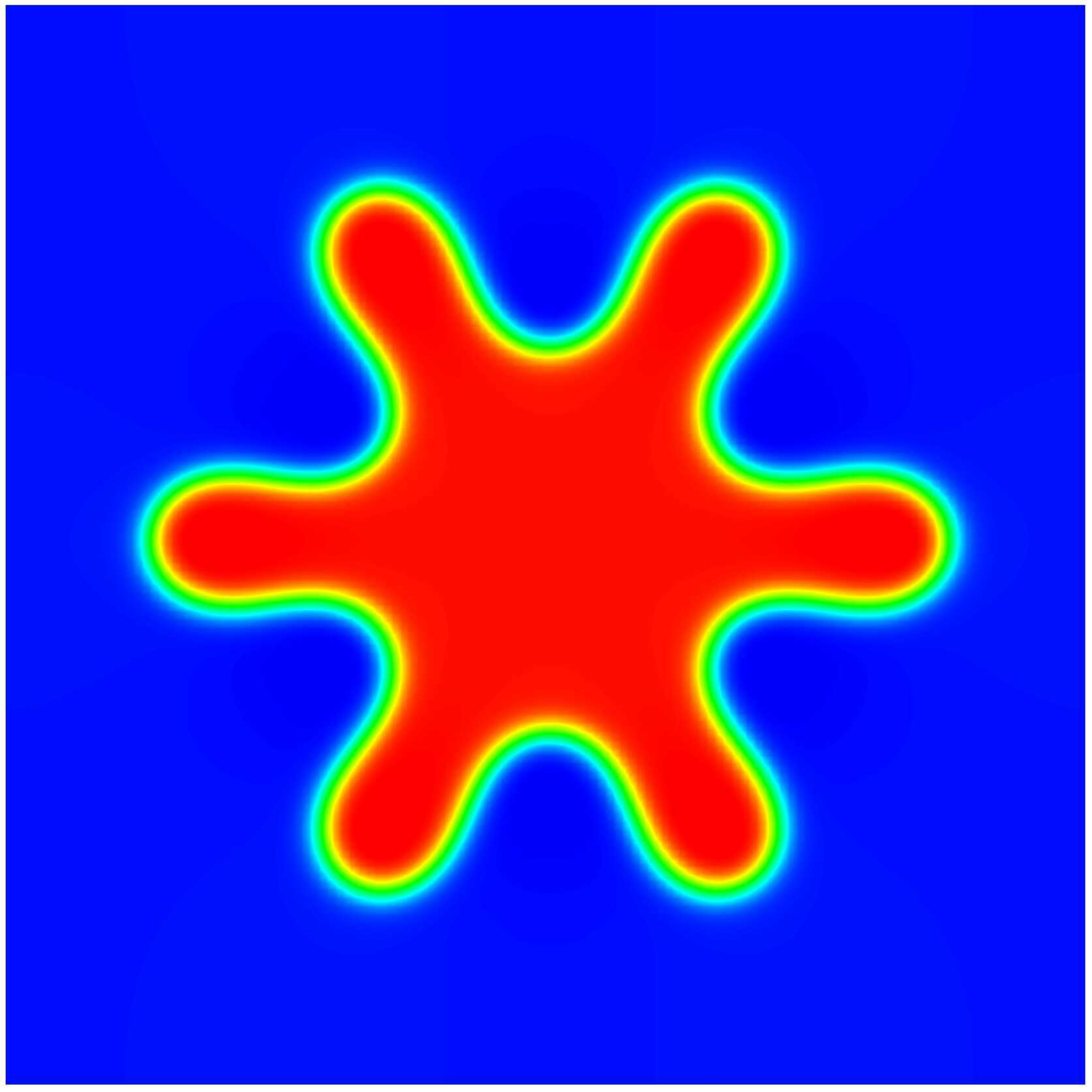}
}

\subfigure[profiles of $\phi$ at $t=1,1.2, 3$]{
\includegraphics[width=0.24\textwidth]{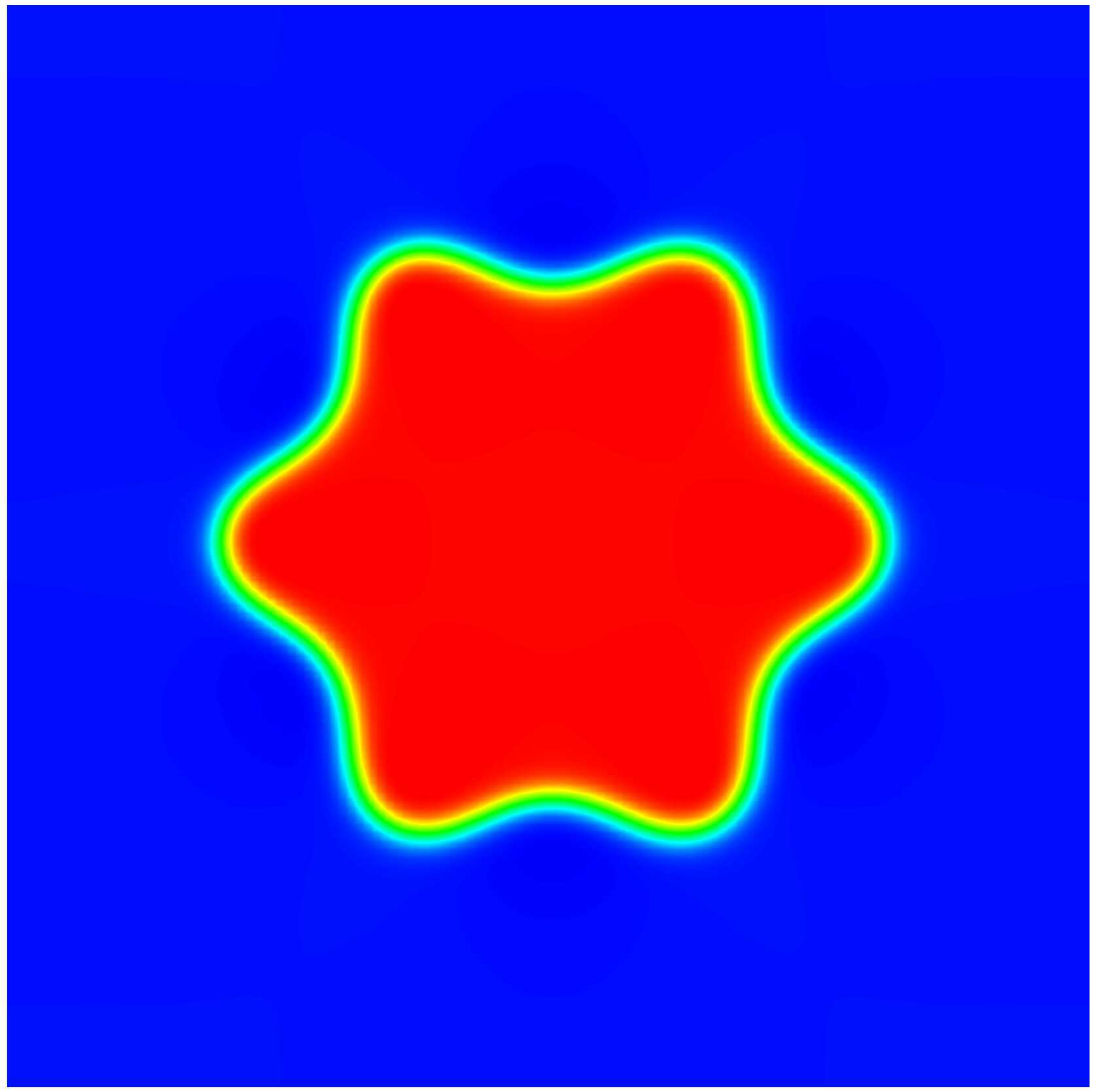}
\includegraphics[width=0.24\textwidth]{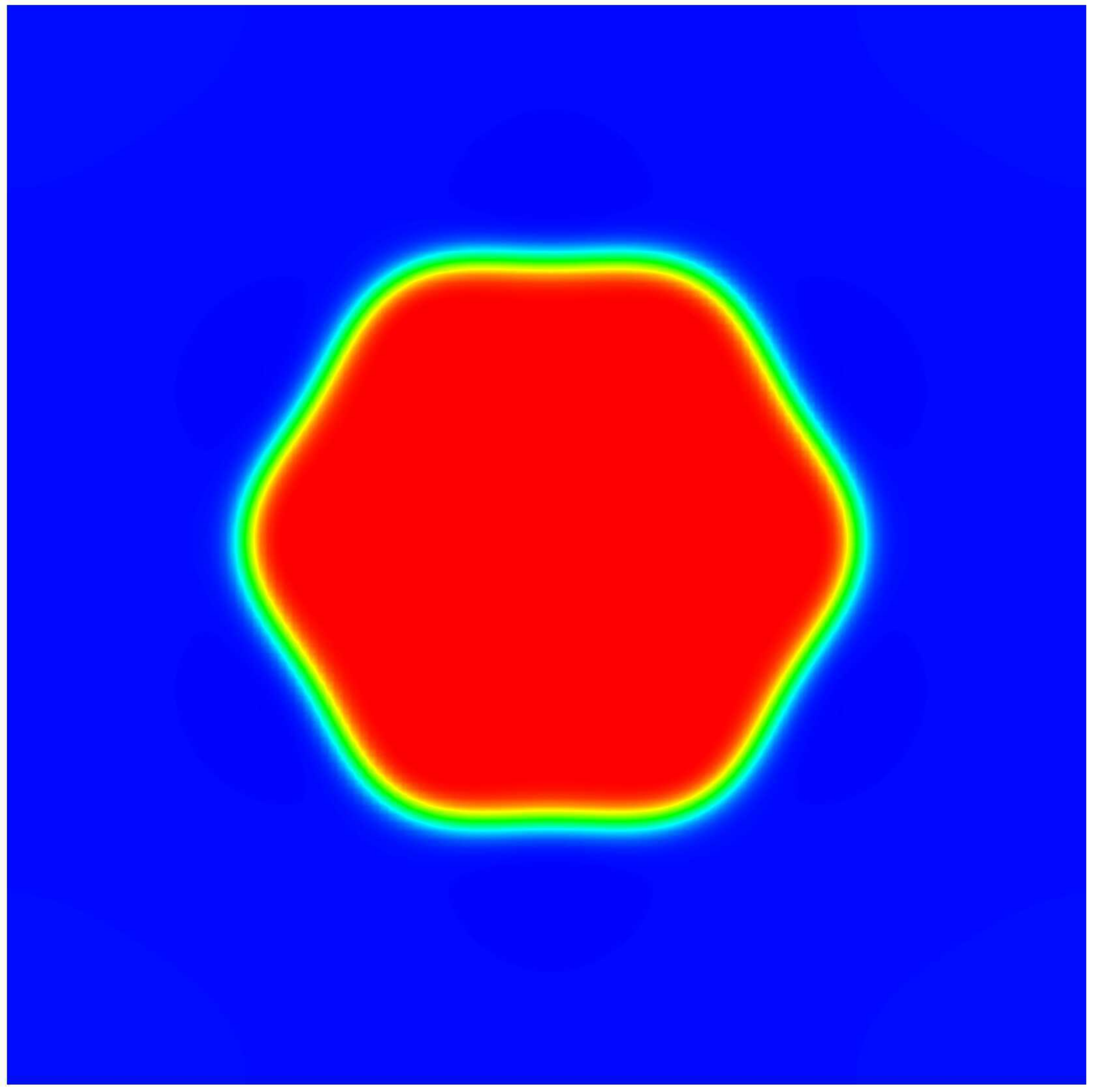}
\includegraphics[width=0.24\textwidth]{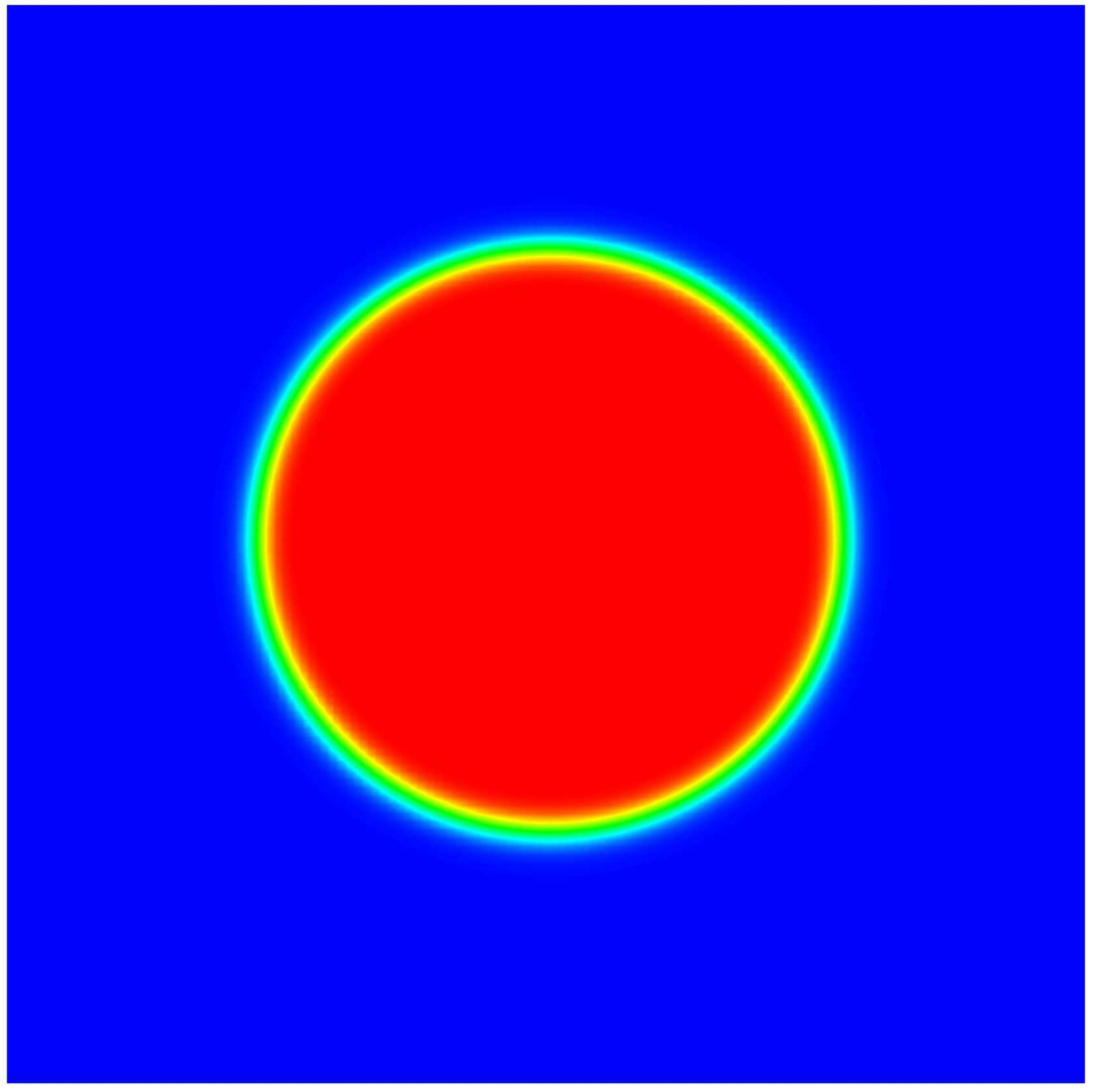}
}
\caption{Dynamics driven by the Cahn-Hilliard equation. The profiles of $\phi$ at various times are shown. }
\label{fig:CH-example}
\end{figure}

Next, we investigate the coarsening dynamics driven by the Cahn-Hilliard equation. We consider the domain $\Omega=[0, 4]^2$, and set $\lambda=0.1$, $\varepsilon=0.01$. Set the initial condition as 
$\phi(x, y, t=0) = \hat{\phi}_0 + 0.05 rand(x, y)$,
where $rand(x,y)$ generates rand numbers between $-1$ and 1, and $\hat{\phi}_0$ is a constant. We use the relaxed SAV-CN scheme to solve it with meshes $N_x=N_y=512$, model parameters $\gamma_0=4$, $C_0=1$, $\eta = 0.95$ and time step $\delta t= 0.001$. The results are summarized in Figure  \ref{fig:CH-coarsening}. It is observed that when the volume difference between two phases are small, saying in Figure \ref{fig:CH-coarsening}(a), the spinodal decomposition dynamics takes place; when the volume difference between two phases are larger, saying in Figure \ref{fig:CH-coarsening}(b), the nucleation dynamics takes place.

\begin{figure}
\center
\subfigure[$\hat{\phi}_0 =0$, $\phi$ at $t=0.1, 0.5, 5, 50$]{
\includegraphics[width=0.242\textwidth]{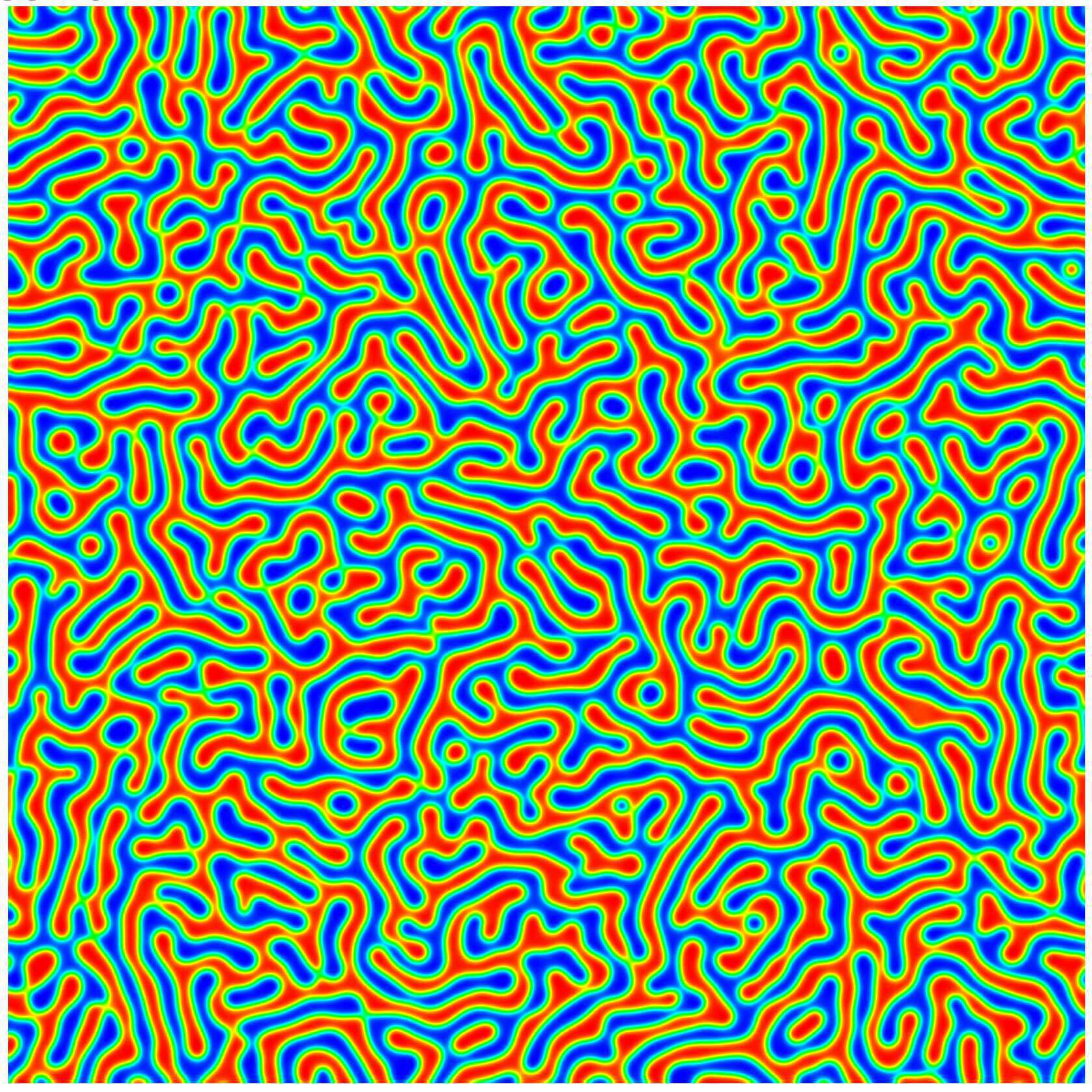}
\includegraphics[width=0.242\textwidth]{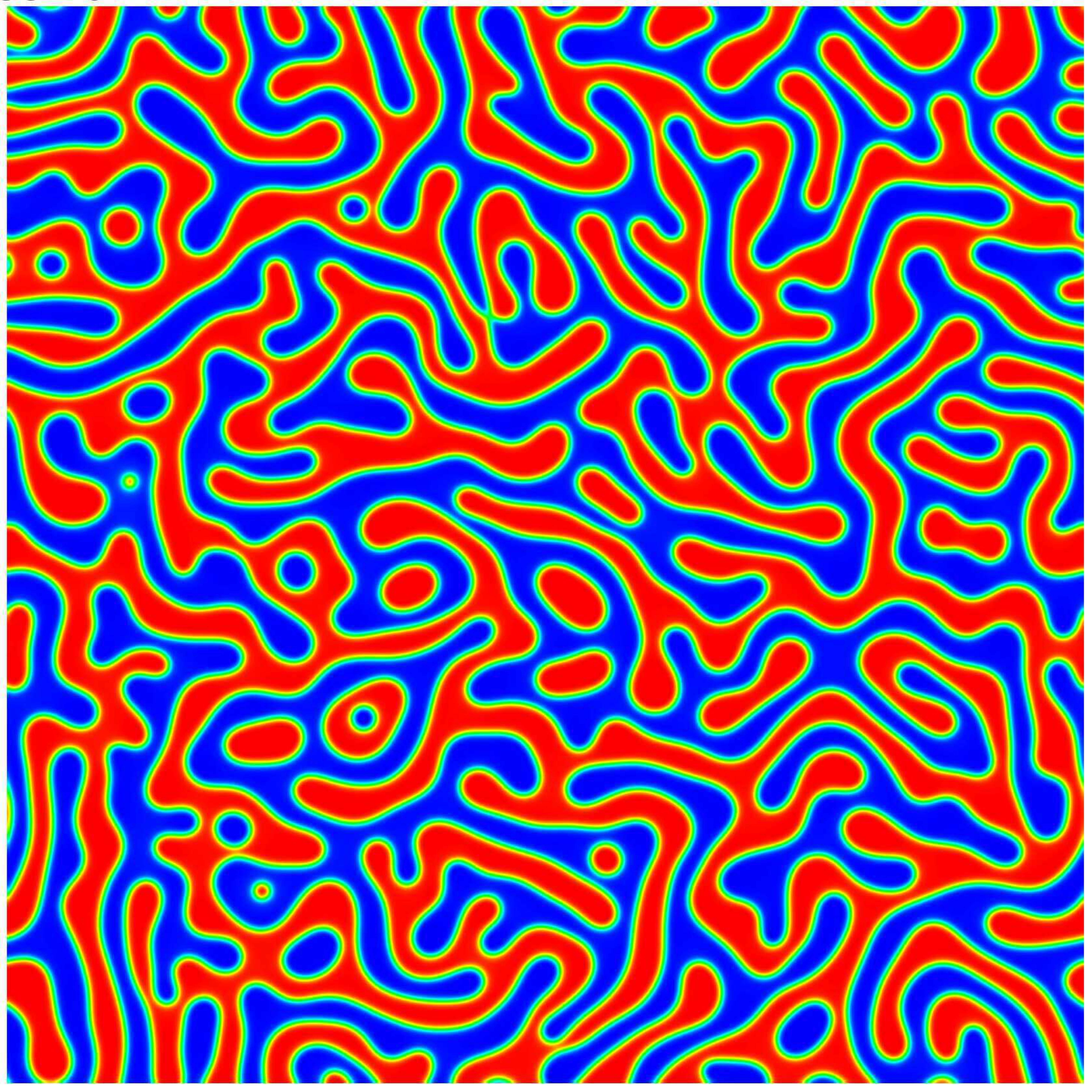}
\includegraphics[width=0.242\textwidth]{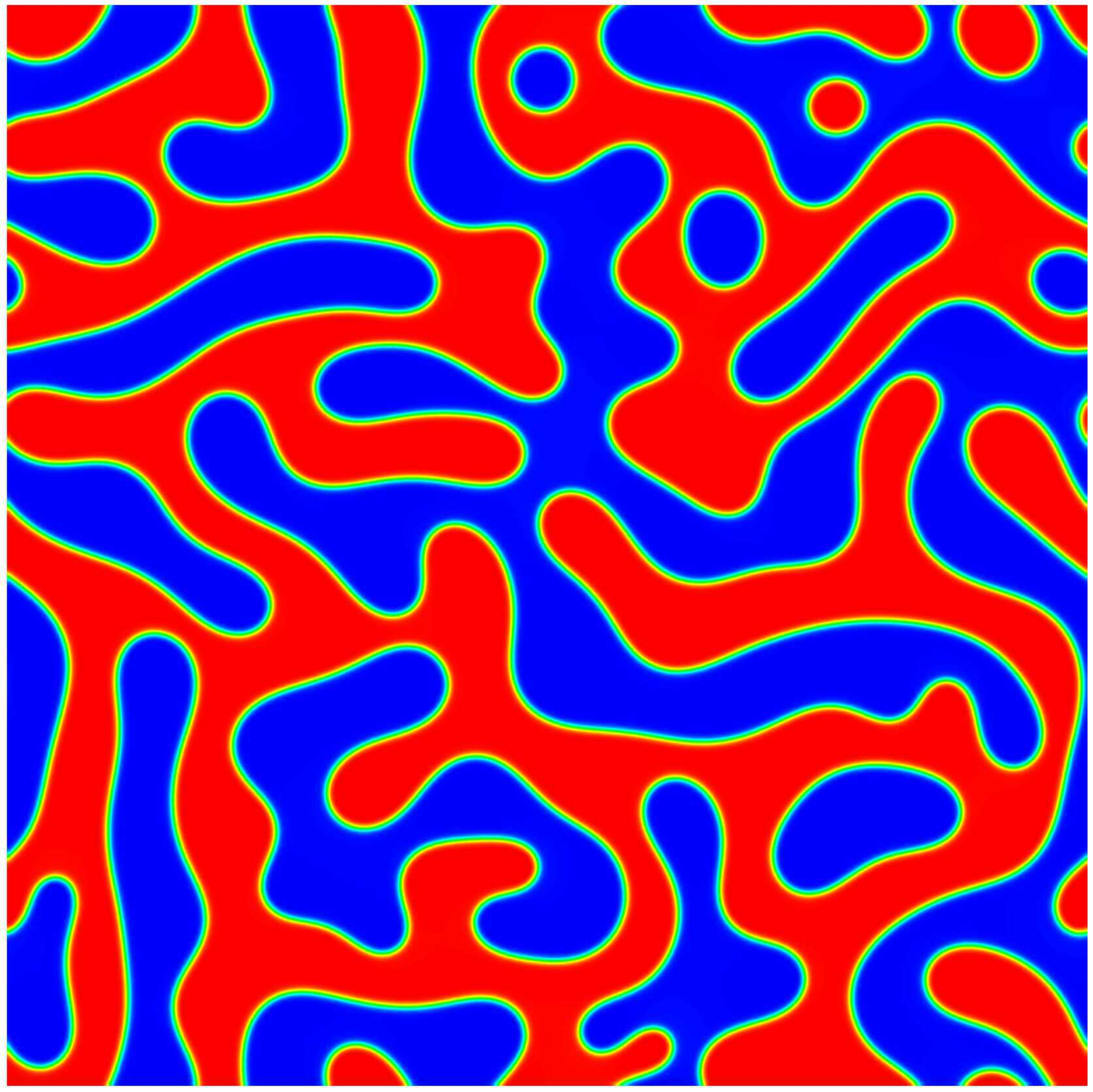}
\includegraphics[width=0.242\textwidth]{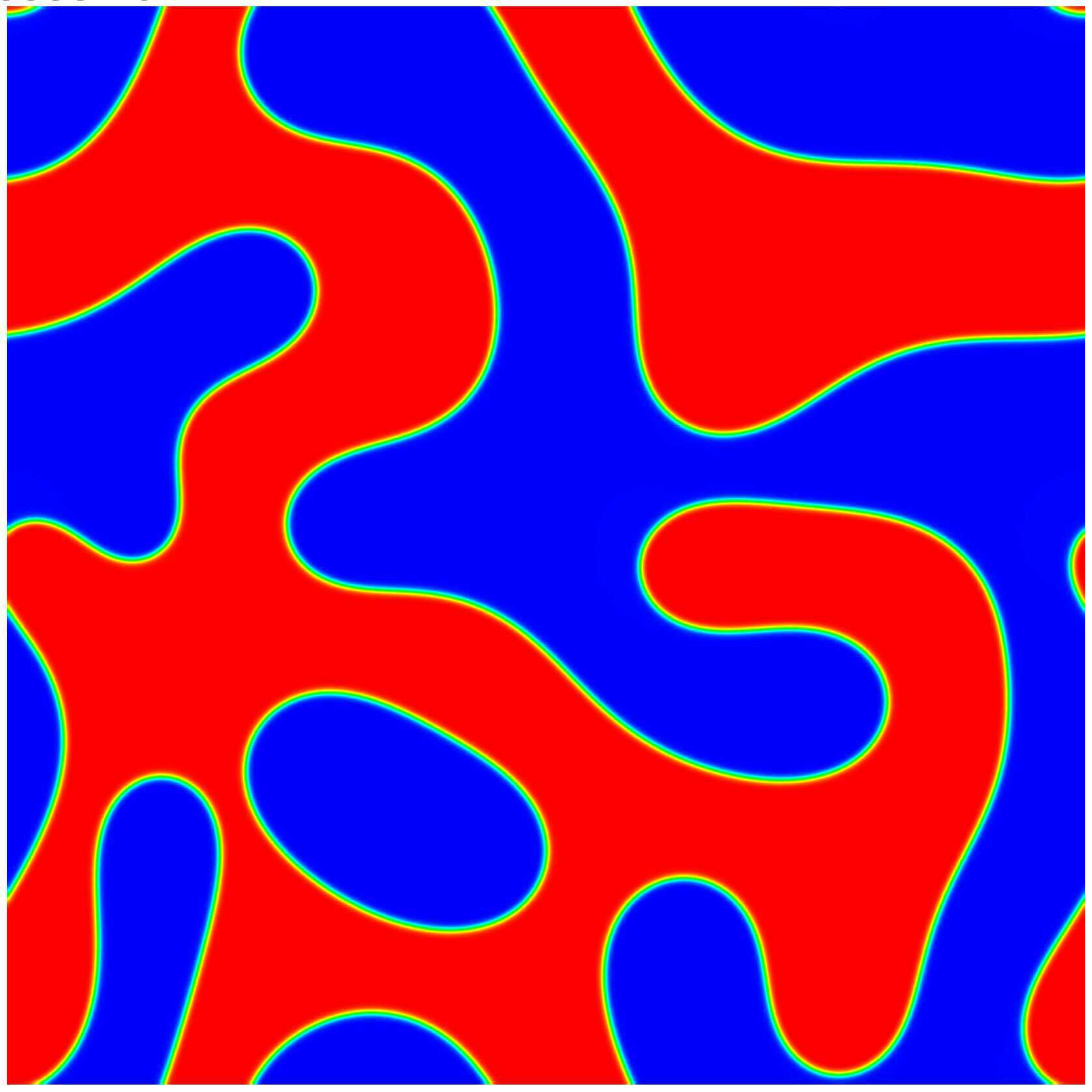}
}

\subfigure[$\hat{\phi}_0=0.4$, $\phi$ at $t=0.1, 0.5, 5, 50$]{
\includegraphics[width=0.242\textwidth]{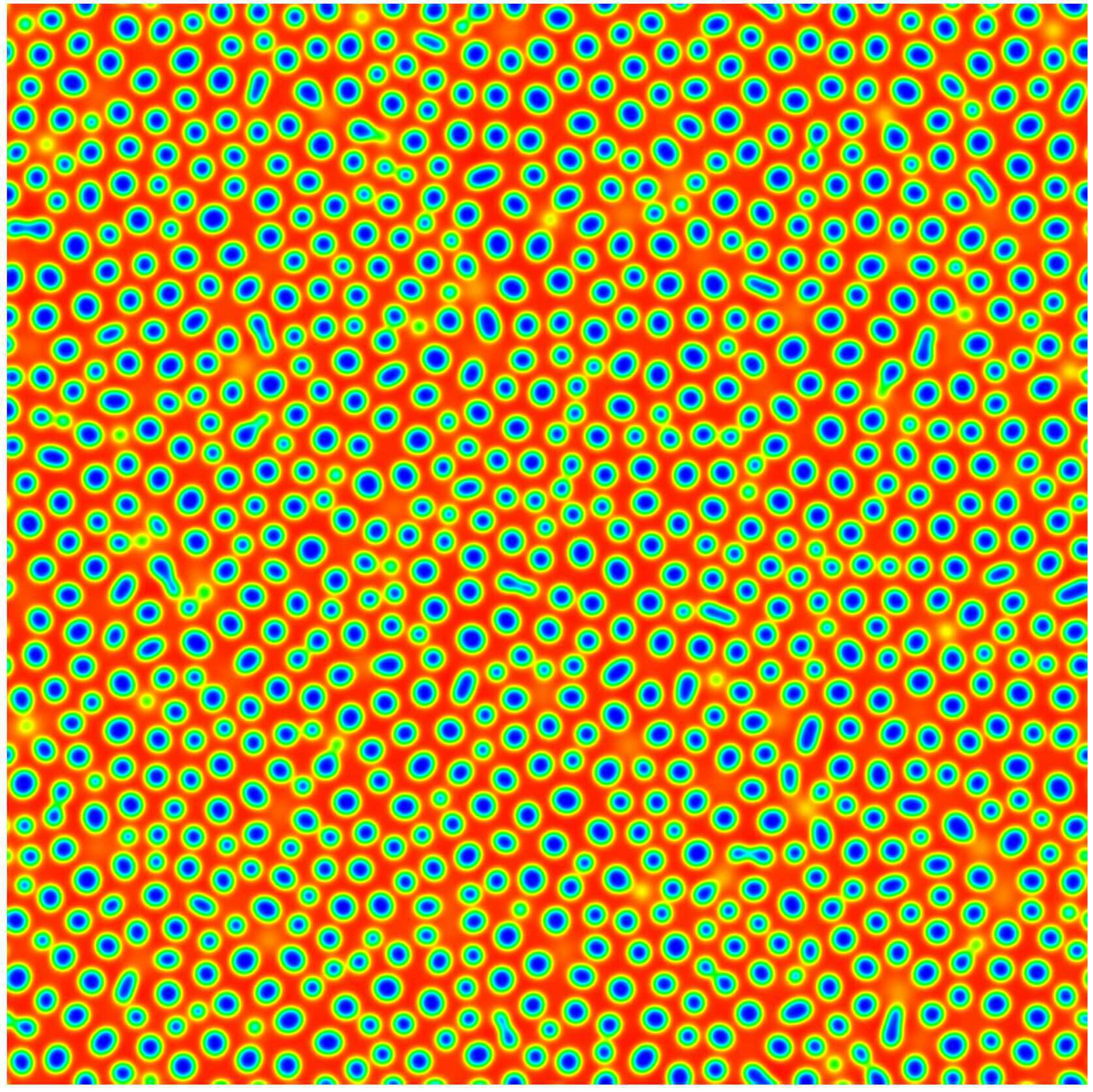}
\includegraphics[width=0.242\textwidth]{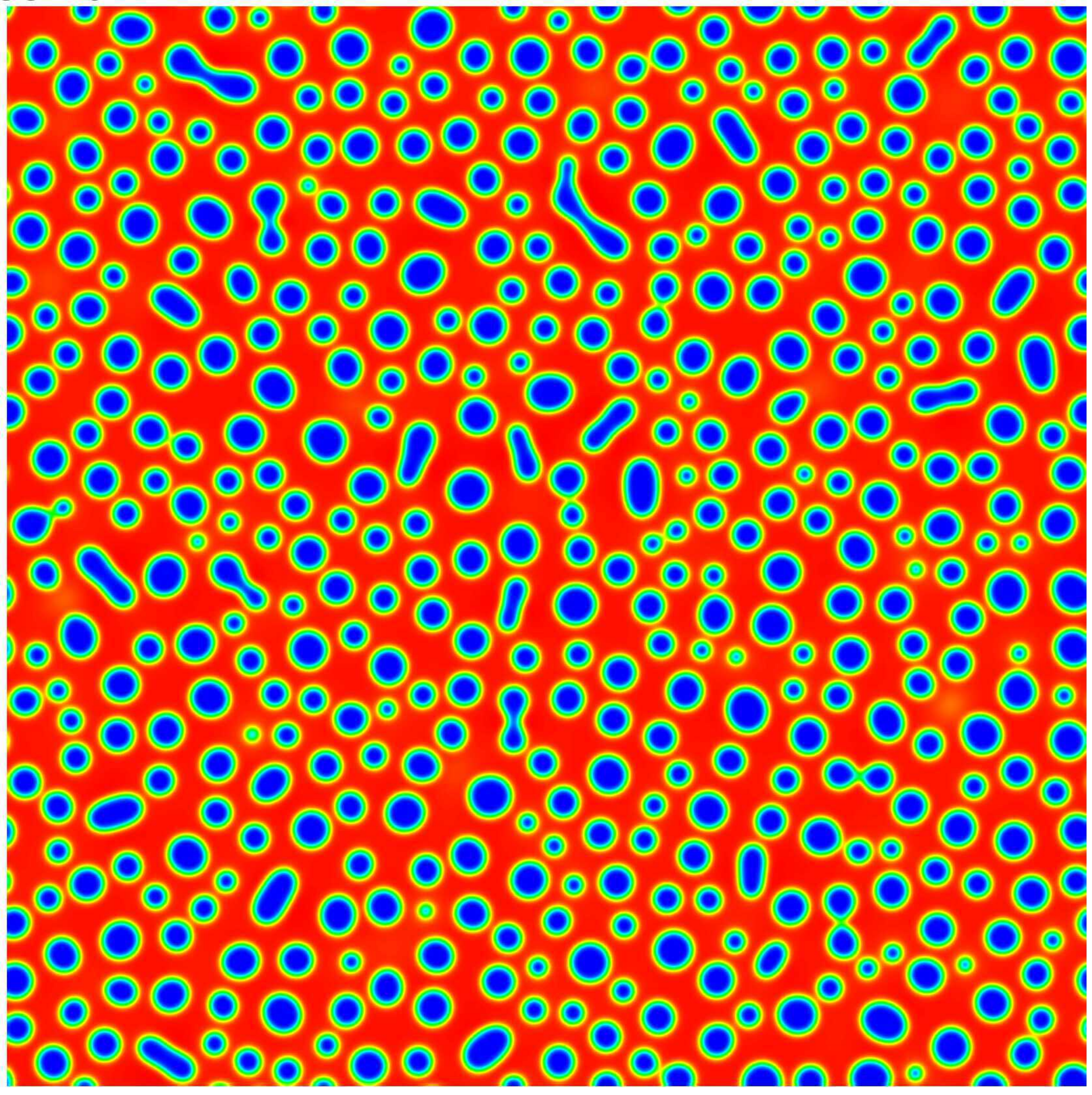}
\includegraphics[width=0.242\textwidth]{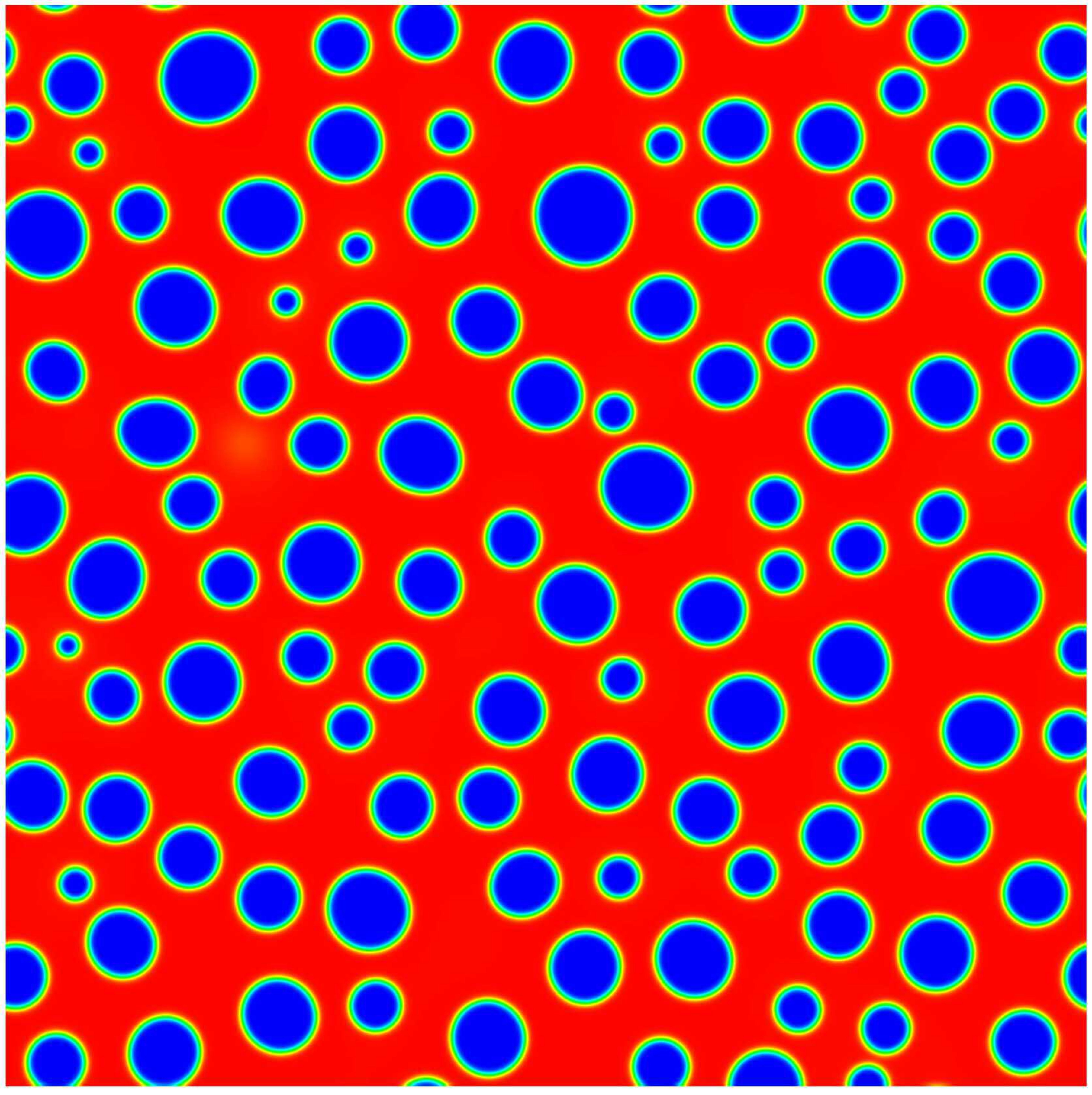}
\includegraphics[width=0.242\textwidth]{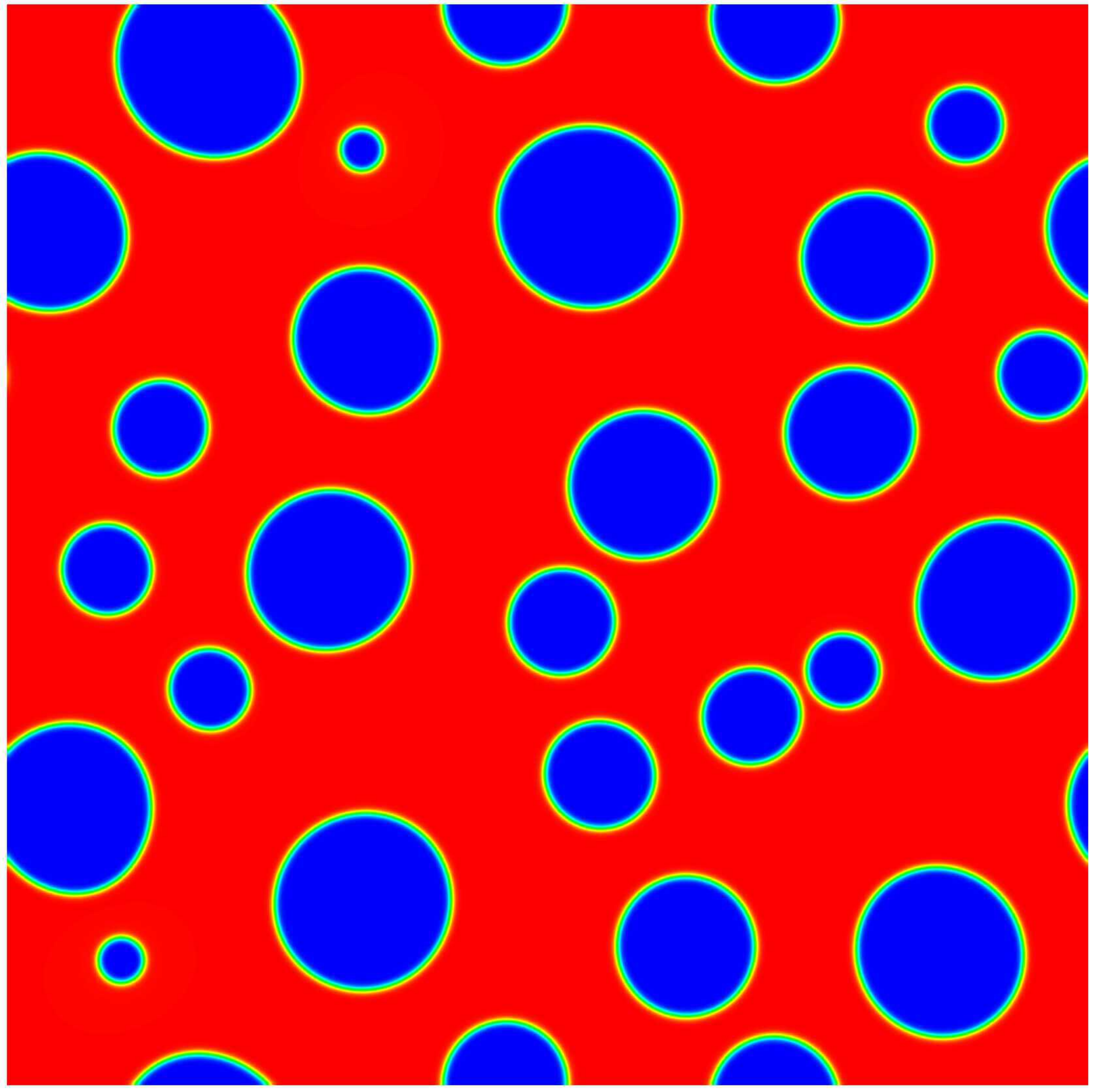}
}
\caption{Coarsening dynamics driven by the Cahn-Hilliard equation. In (a), we set  $\hat{\phi}_0 =0$, and the profiles of $\phi$ at $t=0.1, 0.5,, 5, 50$ are shown; (b) we set  $\hat{\phi}_0=0.4$, and the profiles of $\phi$ at $t=0.1, 0.5, 5, 50$ are shown.}
\label{fig:CH-coarsening}
\end{figure}

\subsection{Molecular beam epitaxy model with slope selection}
In the next example, we consider the molecular beam epitaxy (MBE) model with slope selection. Given $\phi$ denoting the MBE thickness, the free energy is defined as $\cE=\int_\Omega \frac{\varepsilon^2}{2}(\Delta \phi)^2 + \frac{1}{4}(|\nabla \phi|^2 - 1)^2 d\bx$, with the mobility operator, $\cG = 1$, the general gradient flow model in \eqref{eq:generic-model} is specified as the MBE model with slope selection, which reads as
\beq
\partial_t \phi = - \varepsilon^2 \Delta^2 \phi + \nabla \cdot (|\nabla \phi|^2 - 1) \nabla \phi).
\eeq 
With the similar idea as the previous examples, we introduce the scalar auxiliary variable
\beq
q(t) : =Q(\phi(\bx, t)) = \sqrt{\frac{1}{4} \int_\Omega (|\nabla \phi|^2 - 1 - \gamma_0)^2 d\bx + C_0},
\eeq  
then the reformulated model reads as
\begin{subequations}
\begin{align}
& \partial_t \phi = -\varepsilon^2 \Delta^2 \phi + \gamma_0\Delta \phi + \nabla \cdot ( \frac{ q(t)}{Q(\phi)} \nabla \phi), \\
& \frac{d}{dt} q(t) = \int_\Omega \frac{\nabla \phi \cdot \nabla \partial_t  \phi}{2Q(\phi)}d\bx.
\end{align}
\end{subequations}

As a routine, we test the temporal convergence of the relaxed SAV-CN scheme for the MBE model. Consider the domain $\Omega=[0, 1]^2$, and the model parameter $\epsilon=0.1$. We use the same initial condition as before, i.e. $\phi(x, y, t=0)= 0.01 \cos(2\pi x) \cos(2\pi y)$.  We choose $N_x=N_y=128$,  $\gamma_0=4$, $C_0=0$, and $\eta=0.95$. The numerical errors at $t=0.5$ are calculated and summarized in Figure \ref{fig:MBE-mesh}. A second-order convergence for both $\phi$ and $q$ are observed, when the time step is not too large.

\begin{figure}
\center
\subfigure[time mesh refinement test for $\phi$]{\includegraphics[width=0.4\textwidth]{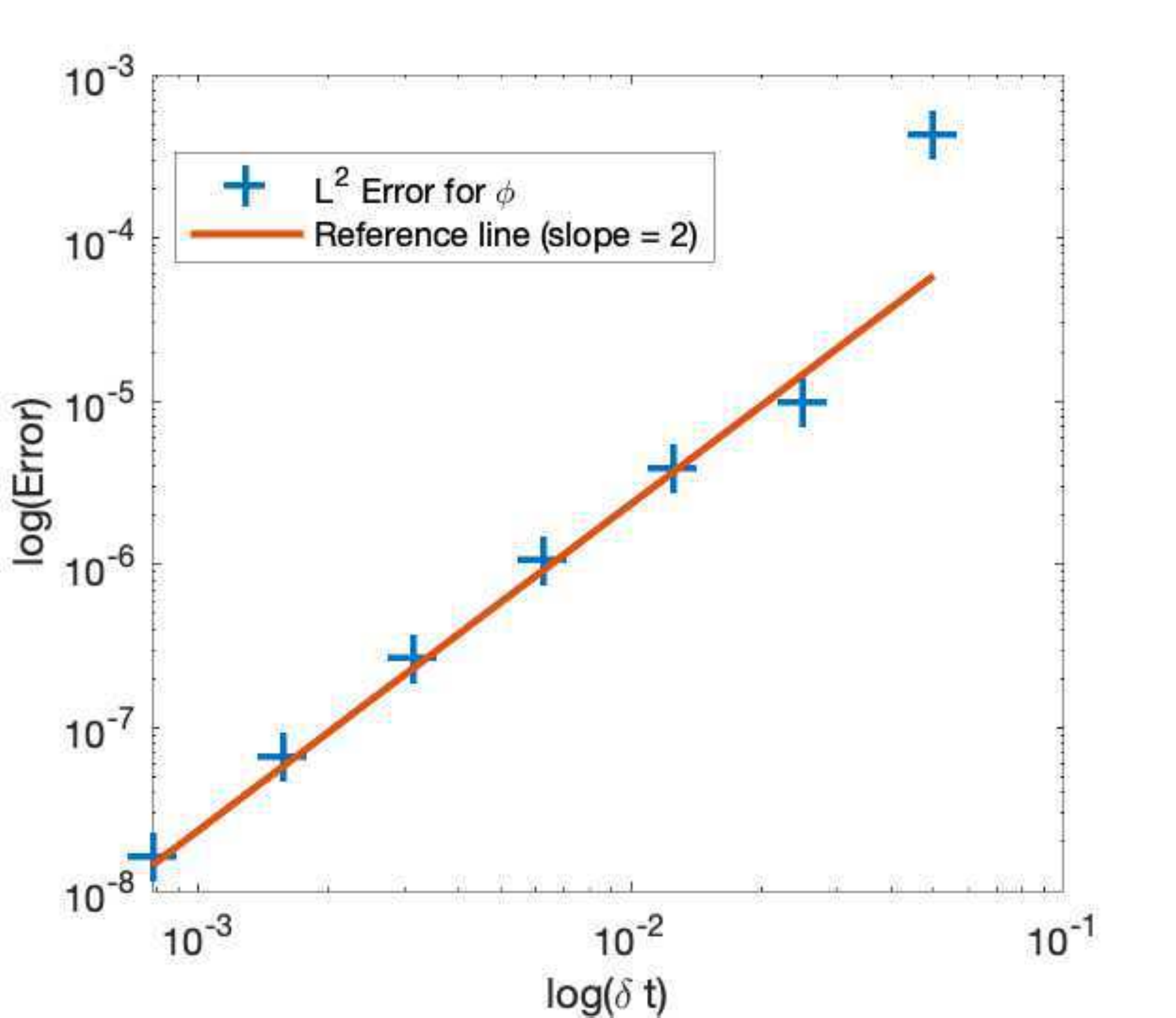}}
\subfigure[time mesh refinement test for $q$]{\includegraphics[width=0.4\textwidth]{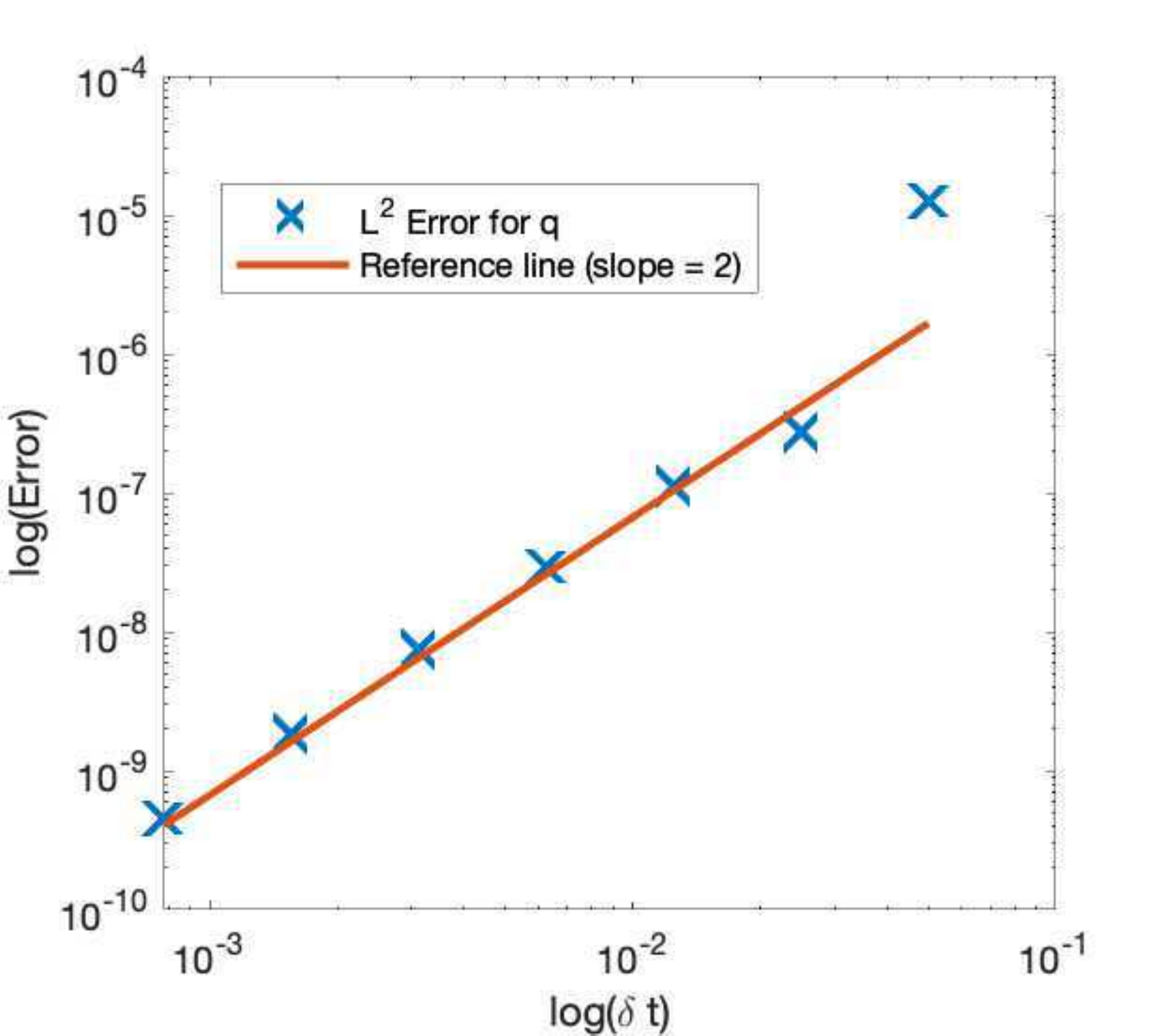}}
\caption{Time step mesh refinement tests of RSAV-CN method for solving the MBE equation. This figure indicates that the proposed RSAV-CN algorithm is second-order accuracy in time when solving the MBE equation.}
\label{fig:MBE-mesh}
\end{figure}

Then, we compare the accuracy between the baseline SAV-CN method and the relaxed SAV-CN method for solving the MBE model. We use the classical benchmark problem for the MBE model. Mainly we consider the domain $\Omega=[0, 2\pi]^2$, with $\epsilon^2 = 0.1$. We solve the problem with $N_x=N_y=128$, $\gamma_0=4$, $C_0=1$, $\eta=0.95$. The numerical comparisons between the baseline SAV-CN scheme and the relaxed SAV-CN scheme are summarized in Figure \ref{fig:MBE-compare}. We observe that the relaxation step increases the numerical accuracy and guarantees the numerical consistency between $q^{n+1}$ and $Q(\phi^{n+1})$. Here we emphasize that the relaxation step is computationally negligible. 

\begin{figure}
\center
\subfigure[Numerical energies]{\includegraphics[width=0.4\textwidth]{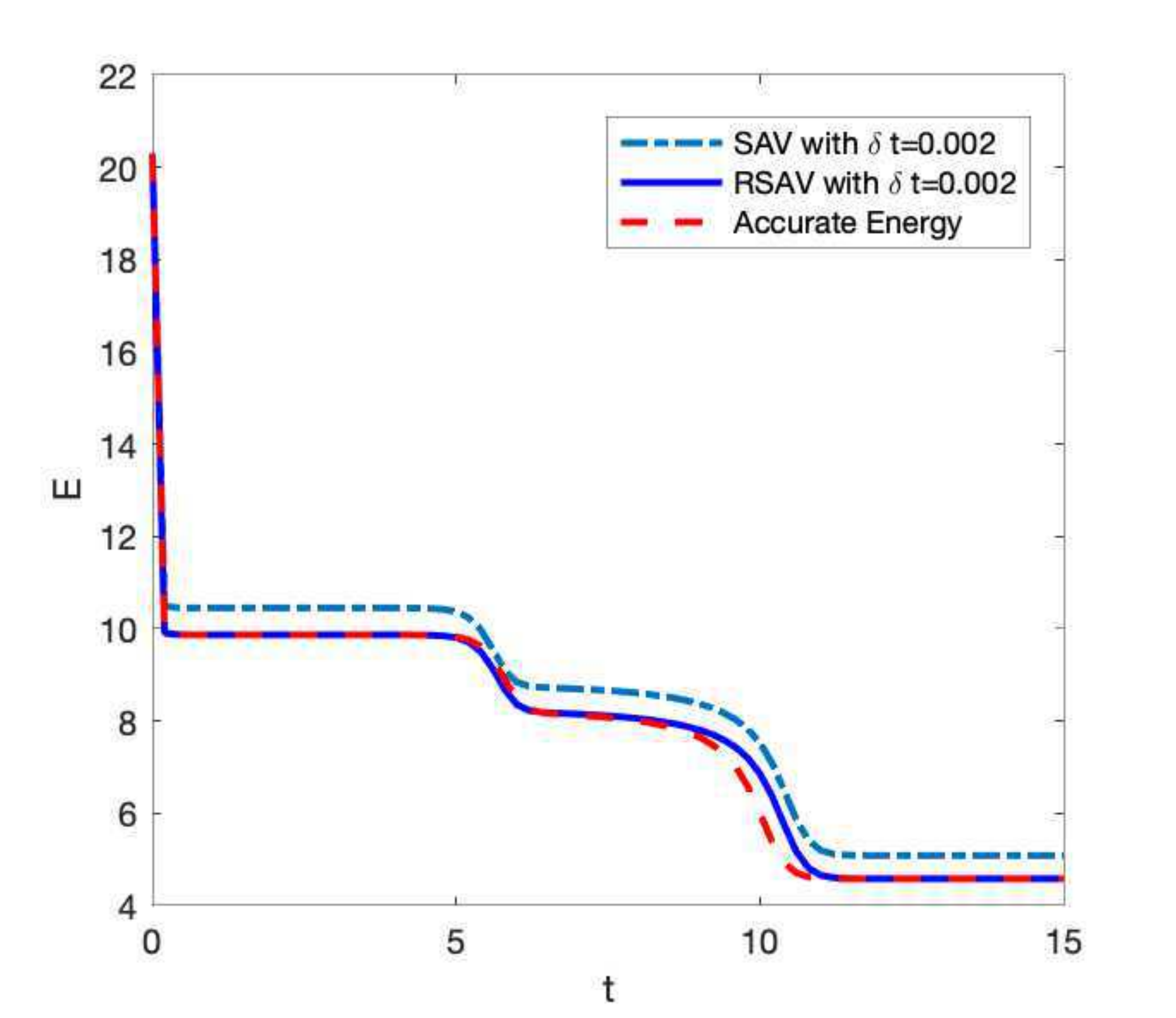}}
\subfigure[Numerical errors for $q(t)-Q(\phi(\bx,t))$]{\includegraphics[width=0.4\textwidth]{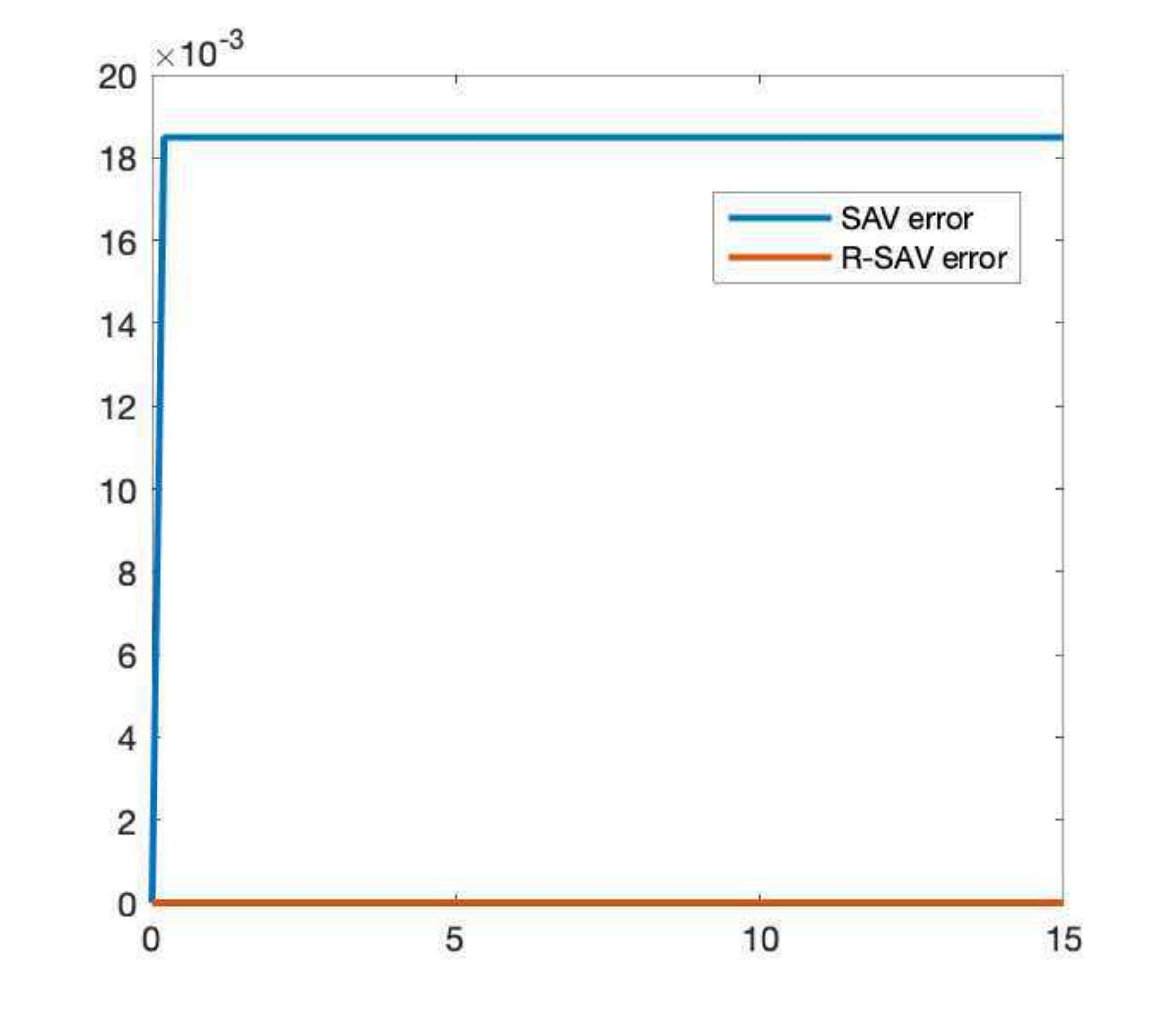}}
\caption{A comparison between the baseline SAV-CN method and the RSAV-CN method for solving the MBE model. In (a) the comparisons of numerical energies are shown. The RSAV method provides more accurate result. In (b), the numerical results for $q(t)-Q(\phi(\bx,t))$ are shown, where we observe that the baseline SAV introduces numerical errors for $q(t)-Q(\phi(\bx,t))$, but the relaxed SAV gauntnesses the consistency of $q(t)$ and $Q(\phi(\bx, t)$ numerically.}
\label{fig:MBE-compare}
\end{figure}

\subsection{Other phase-field models}
Thanks to the SAV method's generality, the relaxed SAV method also applies to various dissipative PDE models, and particularly thermodynamically consistent phase field models. We skip some details of using the RSAV method to other models due to space limitation but focus on two more specific applications: (1) the phase-field crystal model; and (2) the diblock copolymer model.

First of all, we consider its application to the phase field crystal (PFC) model. Consider the free energy
$\cE=\int_\Omega \frac{1}{2}  \phi (a_0+\Delta)^2 \phi + \frac{1}{4}\phi^4 -\frac{b_0}{2} \phi^2 d\bx$, and the mobility operator $\cG = -\lambda \Delta$, where $a_0$ and $b_0$ are model parameters. The general gradient flow model in \eqref{eq:generic-model} is reduced to the PFC model, which reads as
\begin{subequations}
\begin{align}
&\partial_t \phi = \lambda \Delta \mu, \\
& \mu =  -(a_0+ \Delta)^2 \phi + \phi^3 -b_0 \phi.
\end{align}
\end{subequations}
If we introduce the scalar auxiliary variable $q(t) :=Q(\phi(\bx, t))= \sqrt{\frac{1}{4} \int_\Omega (\phi^2 - b_0 -\gamma_0)^2d\bx + C_0}$, we get the reformulated model
\begin{subequations}
\begin{align}
& \partial_t \phi = \lambda \Delta \mu, \\
& \mu = -(a_0+\Delta )^2 \phi + \gamma_0 \phi + \frac{q(t)}{Q(\phi)}V(\phi), \quad   V(\phi) = \phi (\phi^2 - b_0-\gamma_0), \\
& \frac{d}{dt} q(t) = \int_\Omega \frac{V(\phi)}{2Q(\phi)} \partial_t \phi d\bx.
\end{align}
\end{subequations}

We verify that the RSAV scheme shows second-order convergence in time when solving the PFC model. The results are not shown to save space. Then we compare the accuracy between the baseline SAV-CN scheme and the relaxed SAV-CN scheme.  We consider the domain $\Omega=[0, 400]^2$, and set $a_0=1$, $b_0=0.325$, $\lambda=1$.  To solve the PFC model, we use the numerical  parameters $\gamma_0=1$, $C_0=1$, $N_x=N_y=512$, $\eta=0.95$. The initial condition is chosen as shown in Figure \ref{fig:PFC-example}(a).  The numerical comparisons between the two schemes are summarized in Figure \ref{fig:PFC-compare}. The RSAV-CN scheme shows more accurate results. Most importantly, the results obtained from the RSAV-CN method show the numerical consistency between the modified energy and the original energy, since it guarantees the consistency of $q(t)$ with $Q(\phi(\bx, t))$ as shown in Figure \ref{fig:PFC-compare}(b).

\begin{figure}
\center
\subfigure[Numerical energies]{\includegraphics[width=0.4\textwidth]{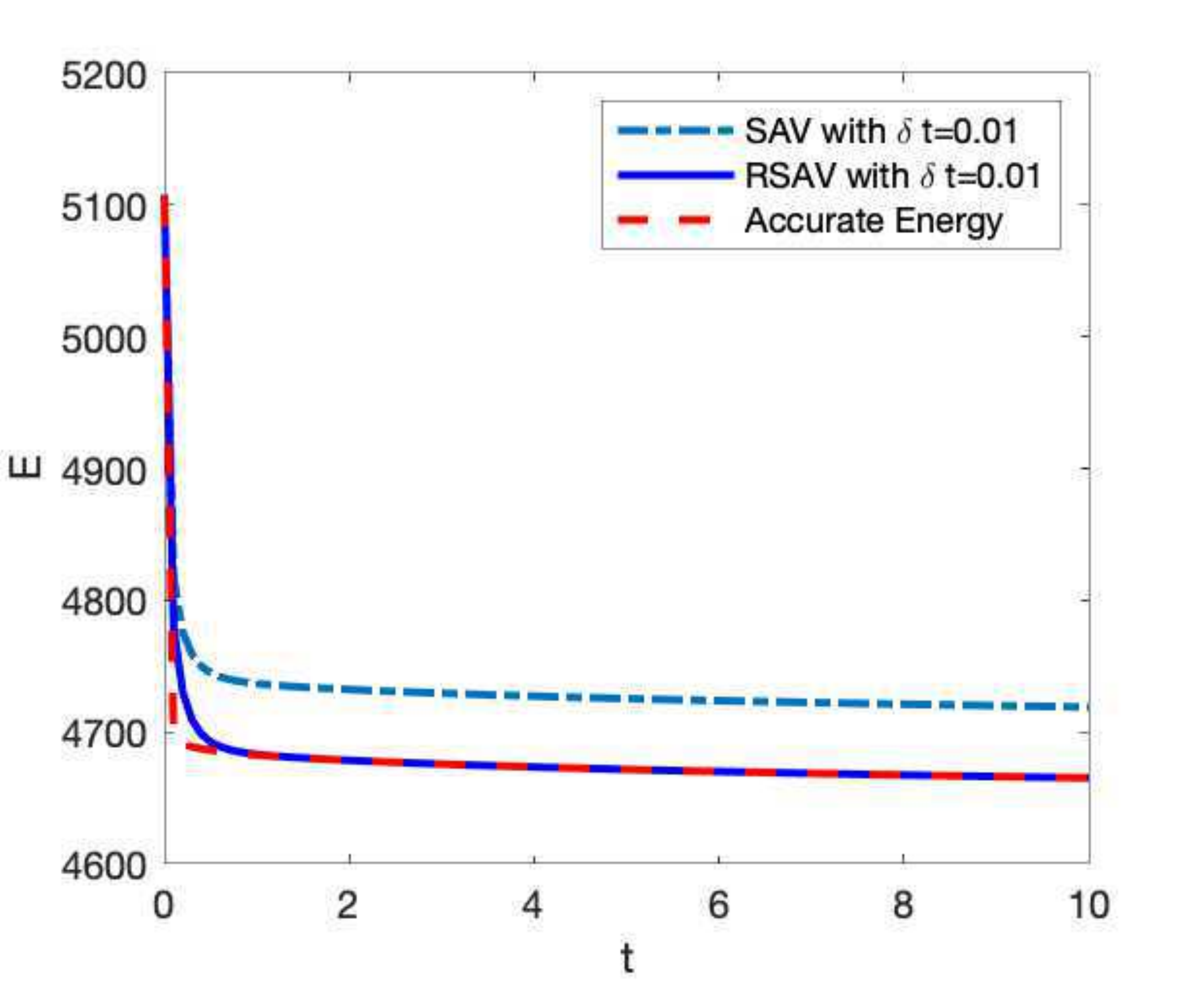}}
\subfigure[Numerical errors for $q(t)-Q(\phi(\bx, t))$]{\includegraphics[width=0.4\textwidth]{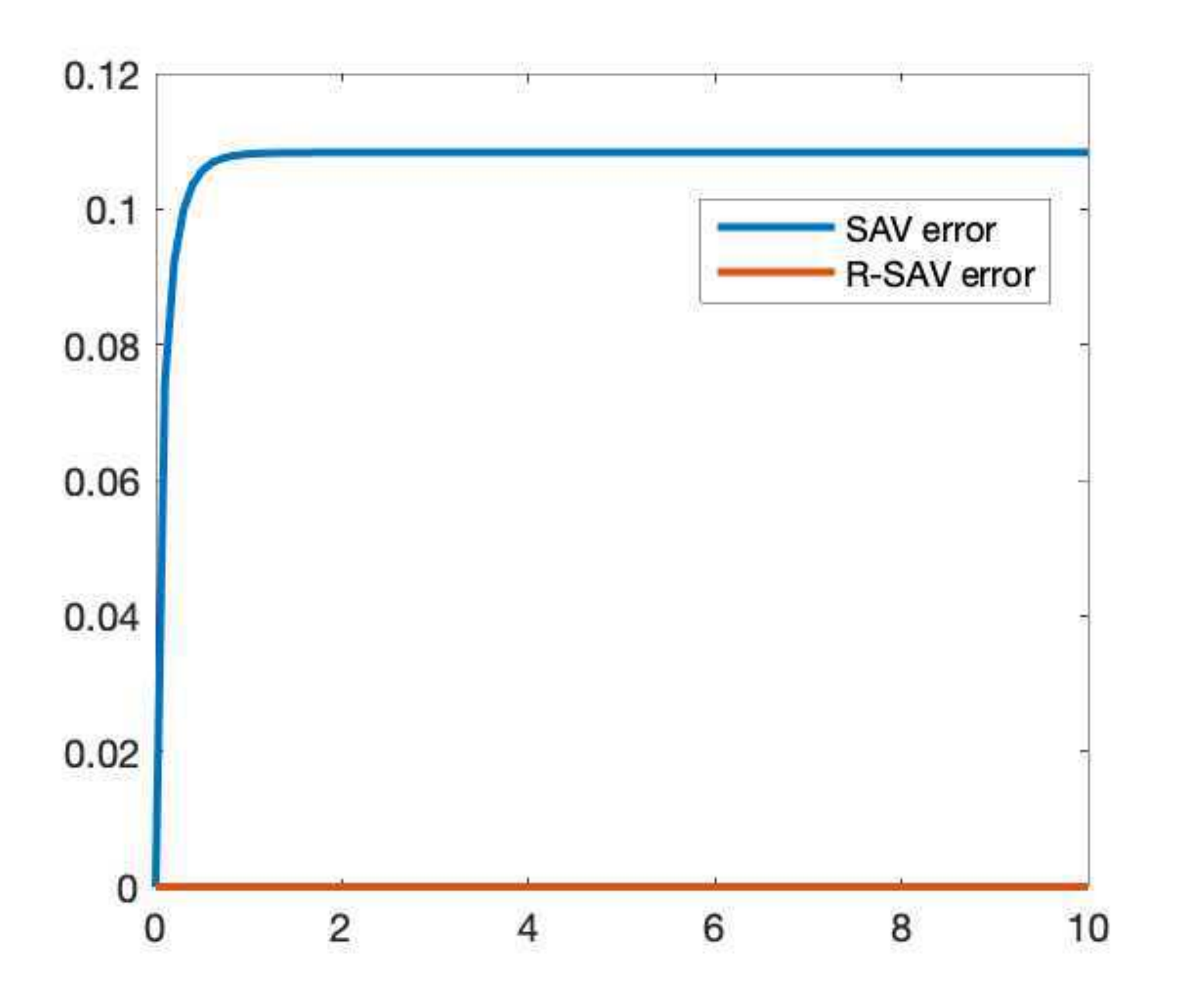}}
\caption{A comparison between the baseline SAV-CN method and the RSAV-CN method for solving the PFC model. In (a) the comparisons of numerical energies using different methods are shown. The RSAV-CN method provides more accurate results than the baseline SAV-CN method. In (b), the numerical errors for $q(t)-Q(\phi(\bx,t))$ are shown. We observe that the baseline SAV-CN method introduces numerical errors for $q(t)-Q(\phi(\bx,t))$, but the RSAV-CN method guarantees the consistency of $q(t)$ and $Q(\phi(\bx, t)$ numerically.}
\label{fig:PFC-compare}
\end{figure}

Also, the profiles of $\phi$ at various times using the relaxed SAV-CN scheme with a time step $\delta t =0.01$ are summarized in Figure \ref{fig:PFC-example}. It indicates the relaxed SAV-CN can be utilized to investigate long-time dynamics and provides accurate numerical results.

\begin{figure}
\center
\subfigure[$\phi$ at $t=0,20,50,80$]{
\includegraphics[width=0.242\textwidth]{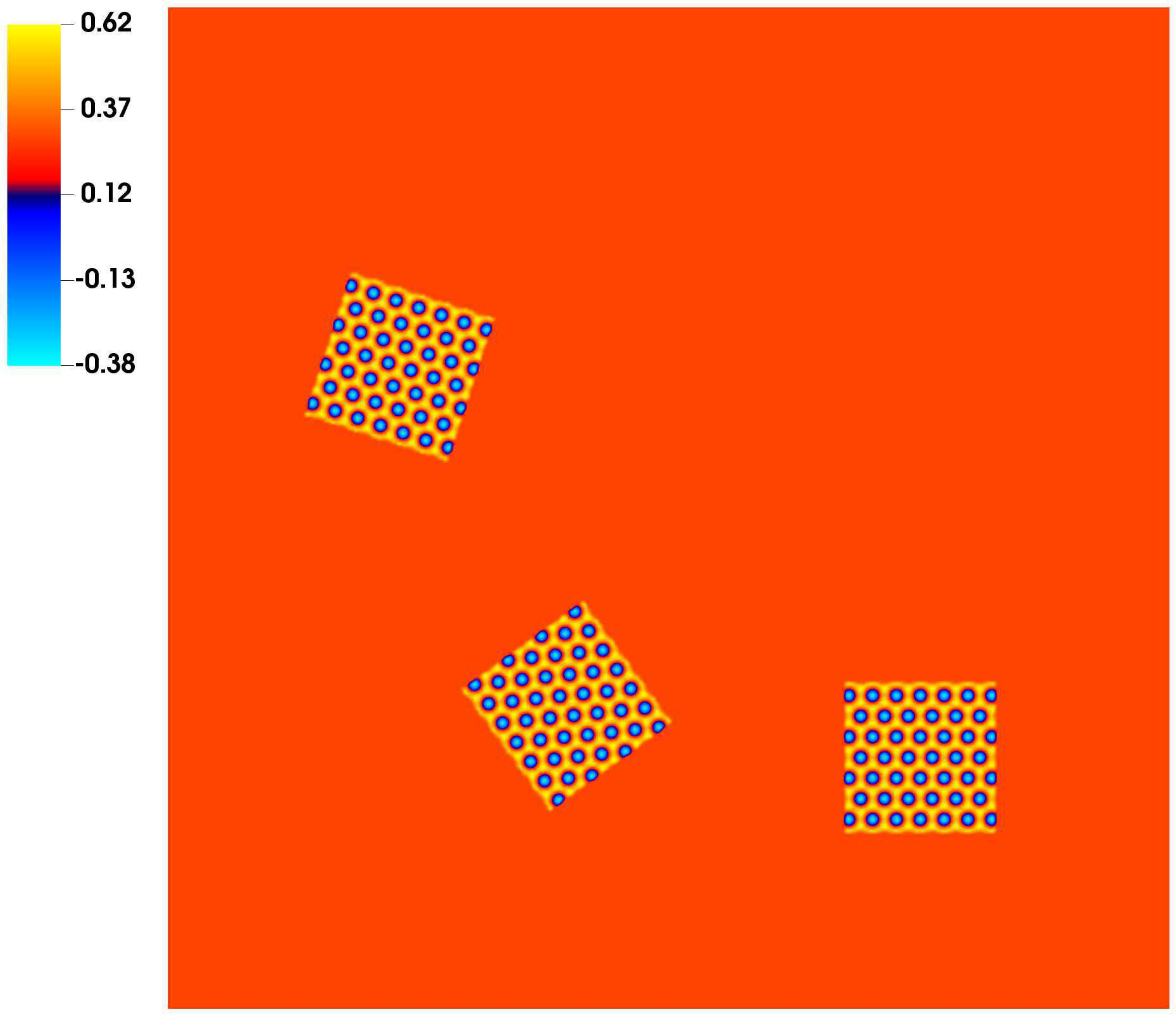}
\includegraphics[width=0.242\textwidth]{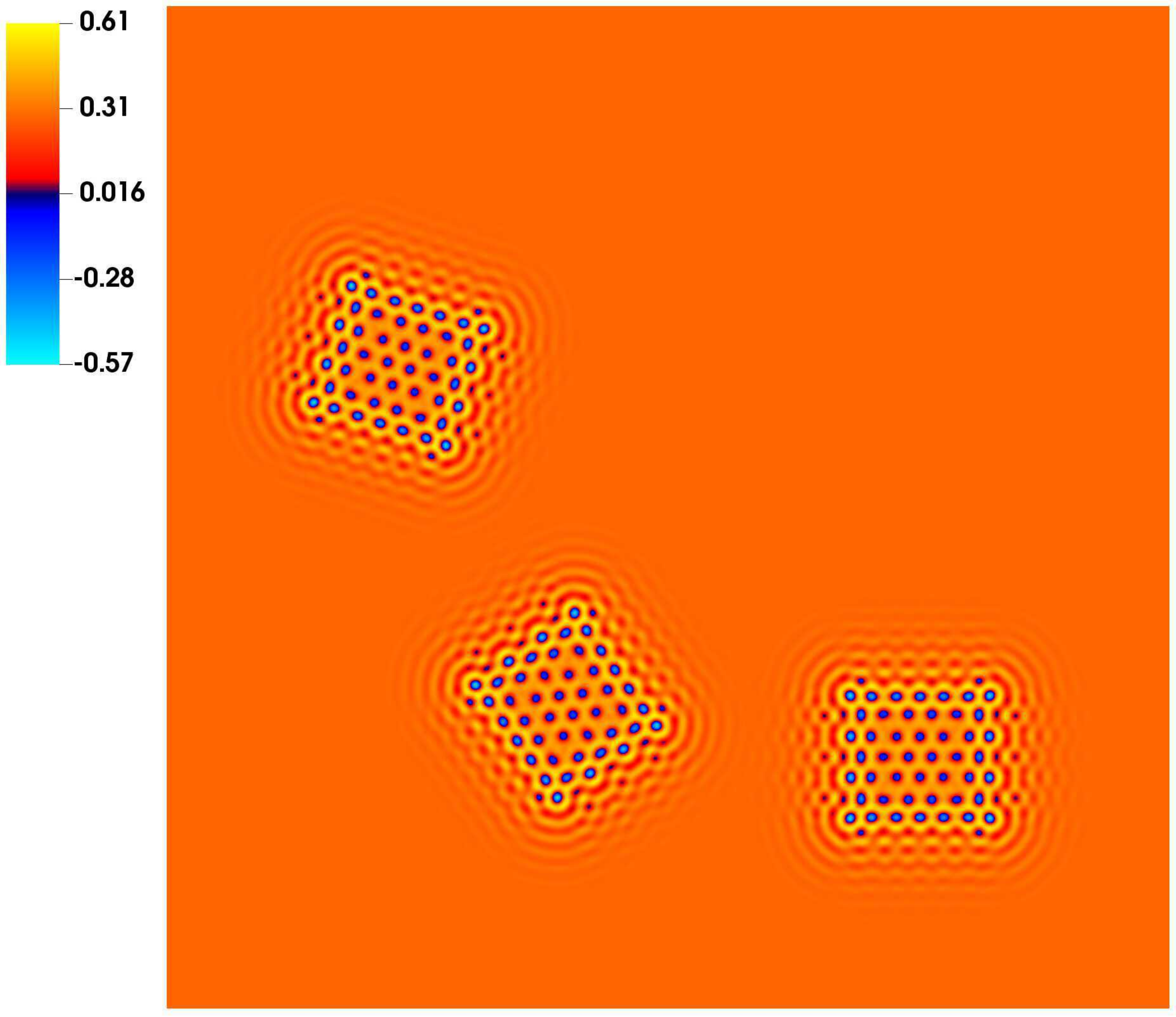}
\includegraphics[width=0.242\textwidth]{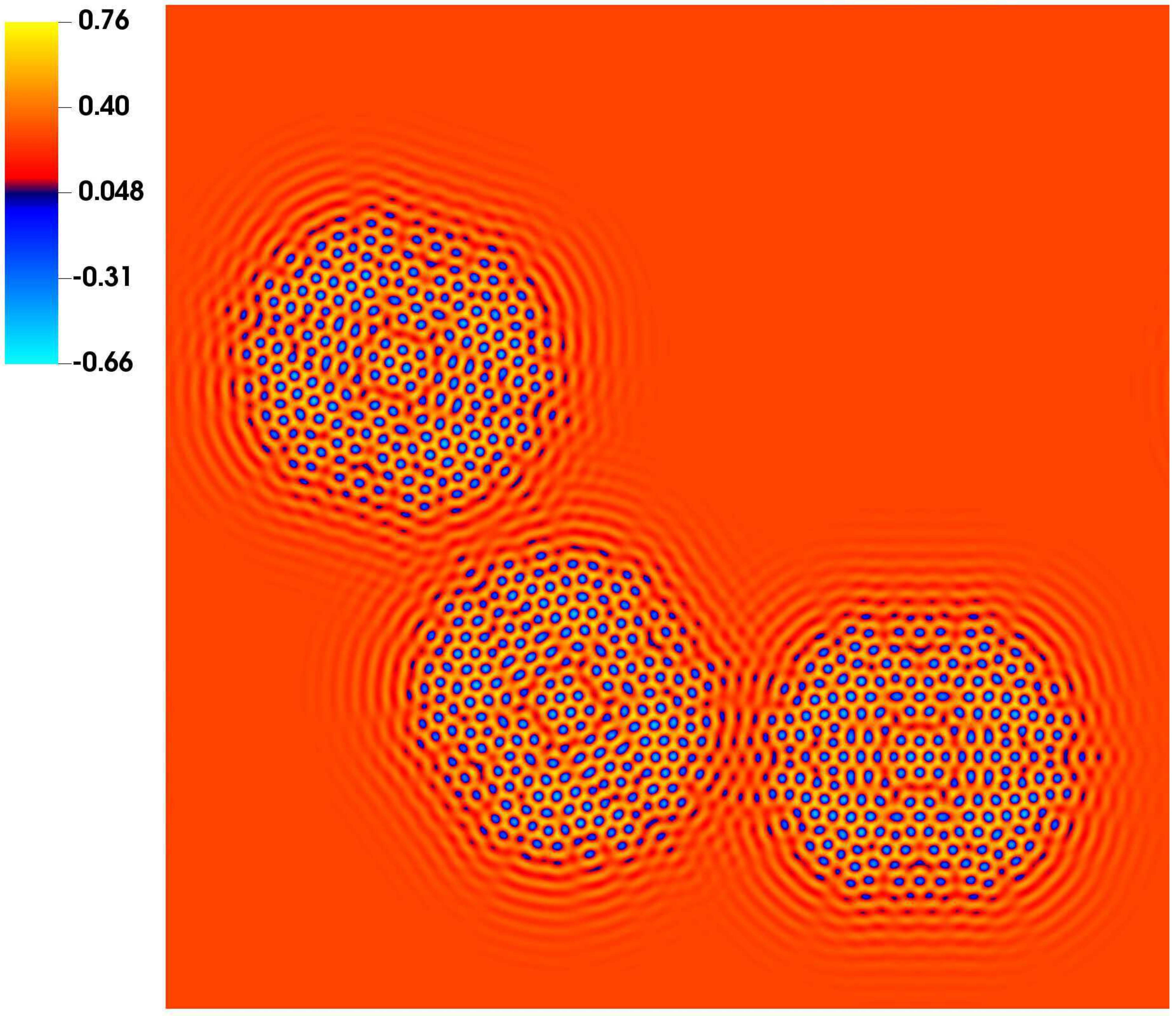}
\includegraphics[width=0.242\textwidth]{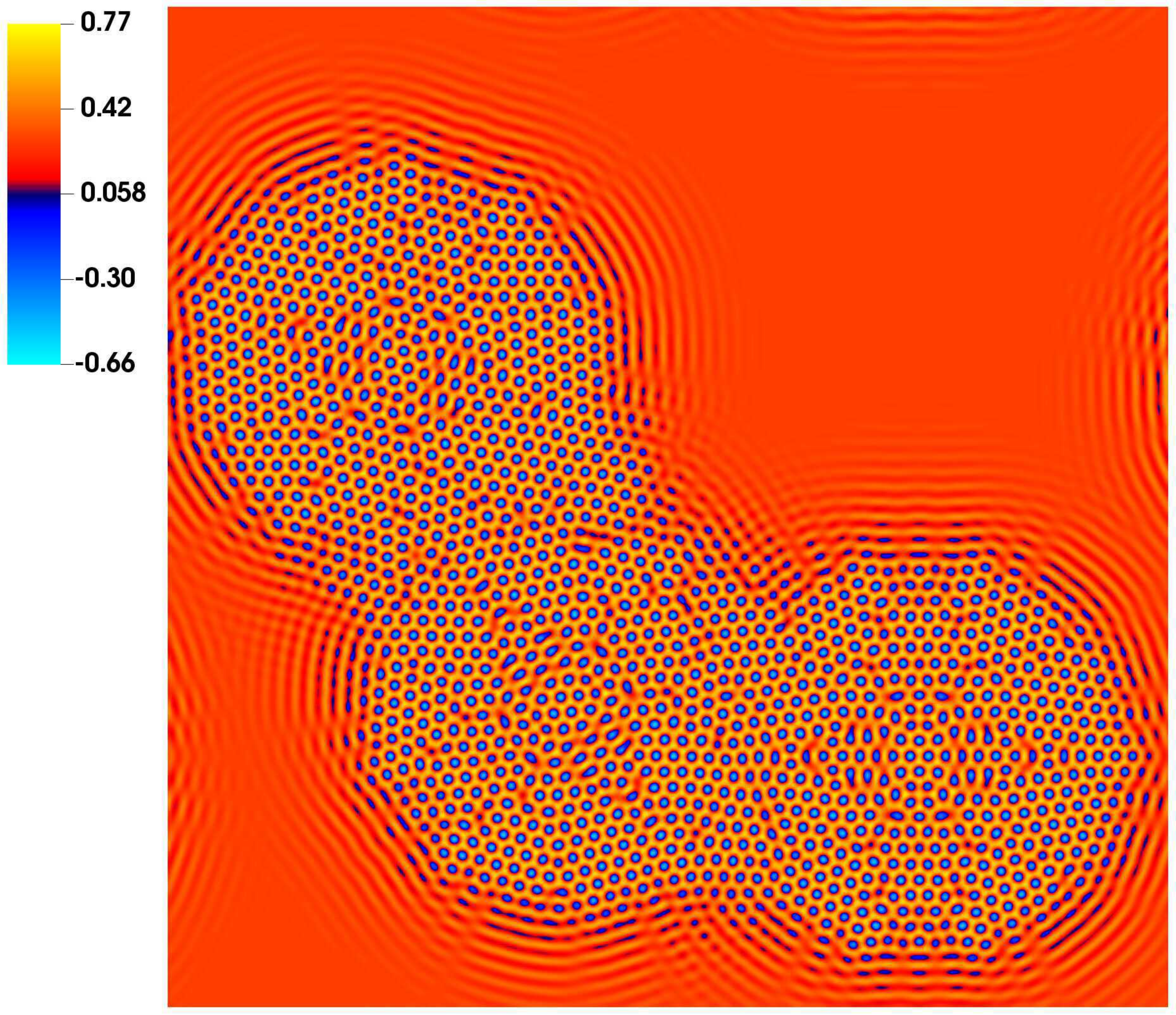}
}

\subfigure[$\phi$ at $t=90,100,150, 250$]{
\includegraphics[width=0.242\textwidth]{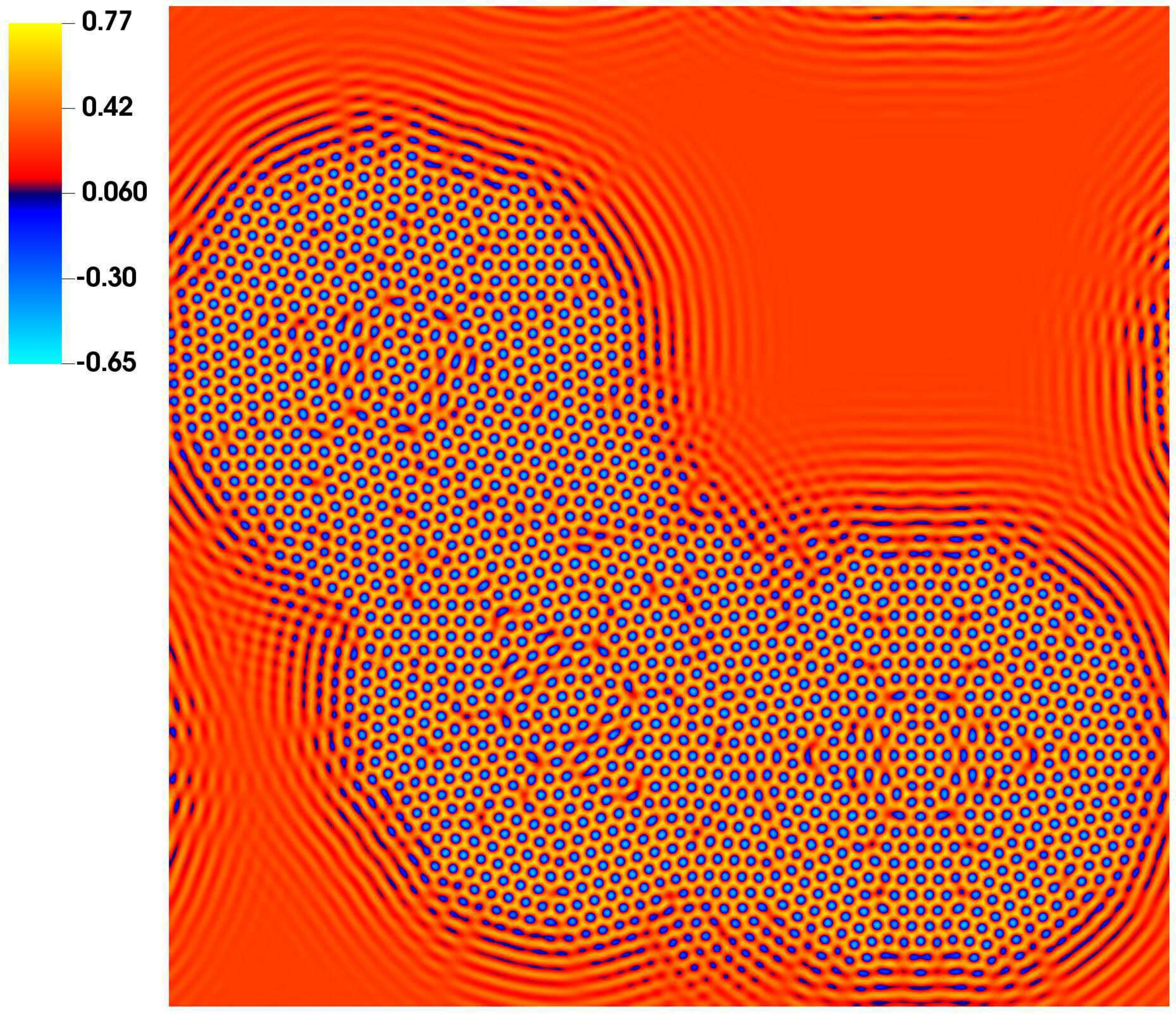}
\includegraphics[width=0.242\textwidth]{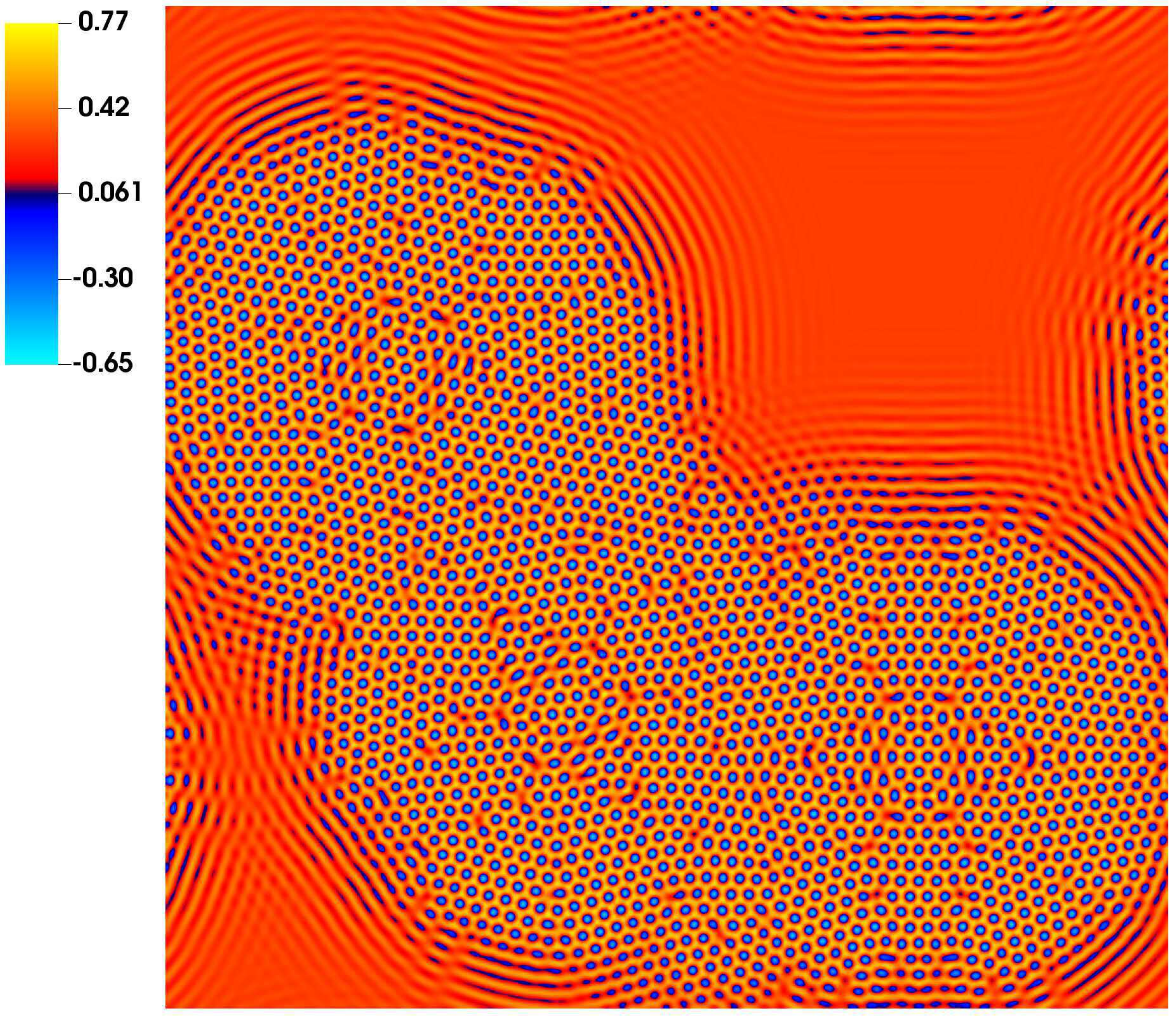}
\includegraphics[width=0.242\textwidth]{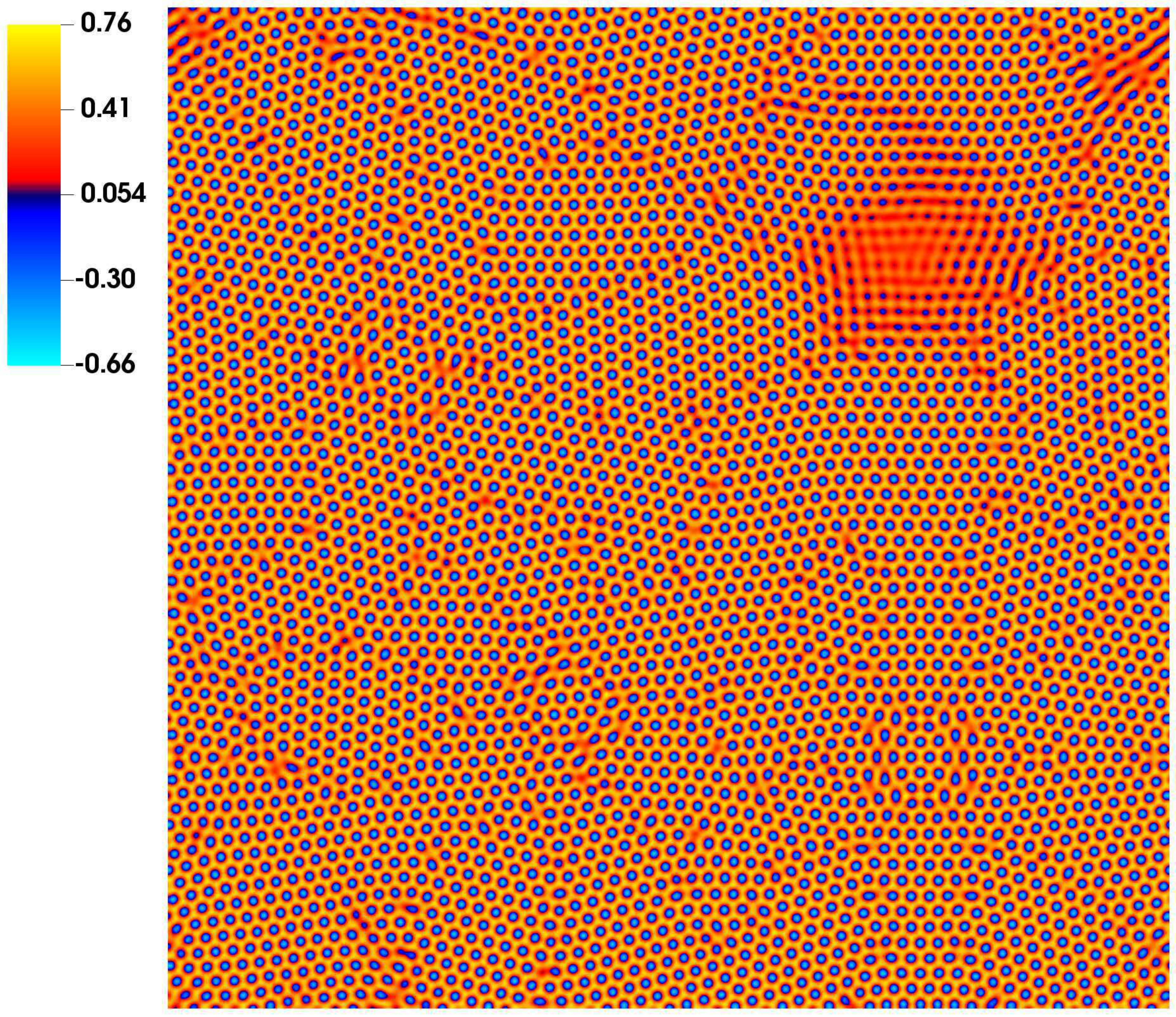}
\includegraphics[width=0.242\textwidth]{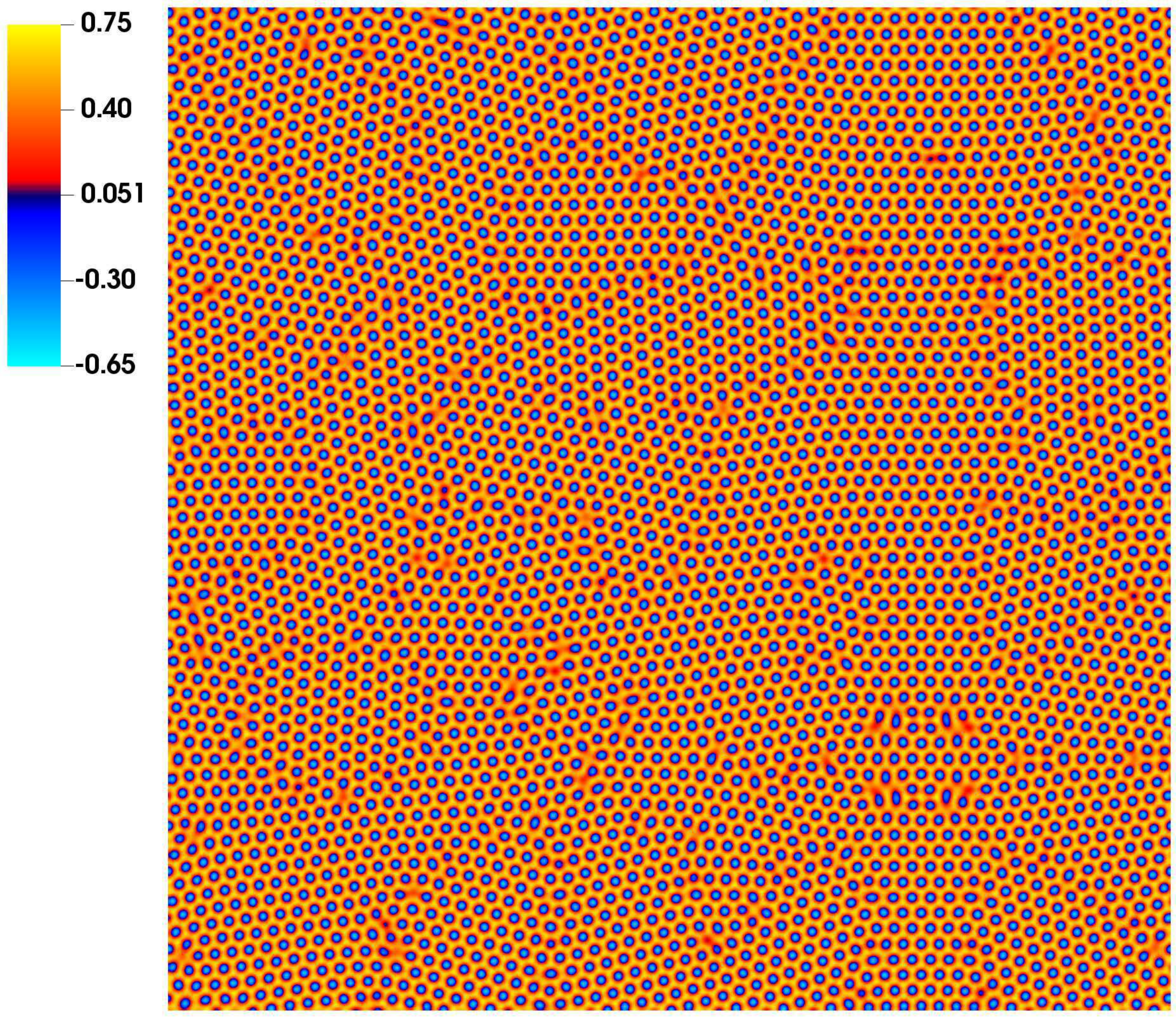}
}

\caption{Crystal growth dynamics driven by the phase-field crystal model. The profiles of $\phi$ at various time slots are shown.}
\label{fig:PFC-example}
\end{figure}

Furthermore, we use the relaxed SAV-CN scheme to investigate the dynamics driven by the PFC model. In this case, we consider the domain $\Omega=[0, 100]^2$, and choose the initial condition $\phi(x, y,t=0)= \hat{\phi}_0 + 0.01 rand(x, y)$, where $rand(x,y)$ generates random numbers between $-1$ and $1$ and $\hat{\phi}_0$ is a constant. To solve the PFC model, we use the numerical settings $N_x=N_y=256$, $\gamma_0 = 1$, $C_0=1$. The numerical results are summarized in Figure \ref{fig:PFC-pattern}. The stripe pattern is observed with $\hat{\phi}_0=0$, and the triangle pattern is observed with $\hat{\phi}_0=0.2$. These observations are consistent with phase diagram of the PFC model as reported in the literature.
\begin{figure}
\center
\subfigure[$\phi$ at $t=1,5,50, 200$ with $\hat{\phi}_0=0$]{
\includegraphics[width=0.24\textwidth]{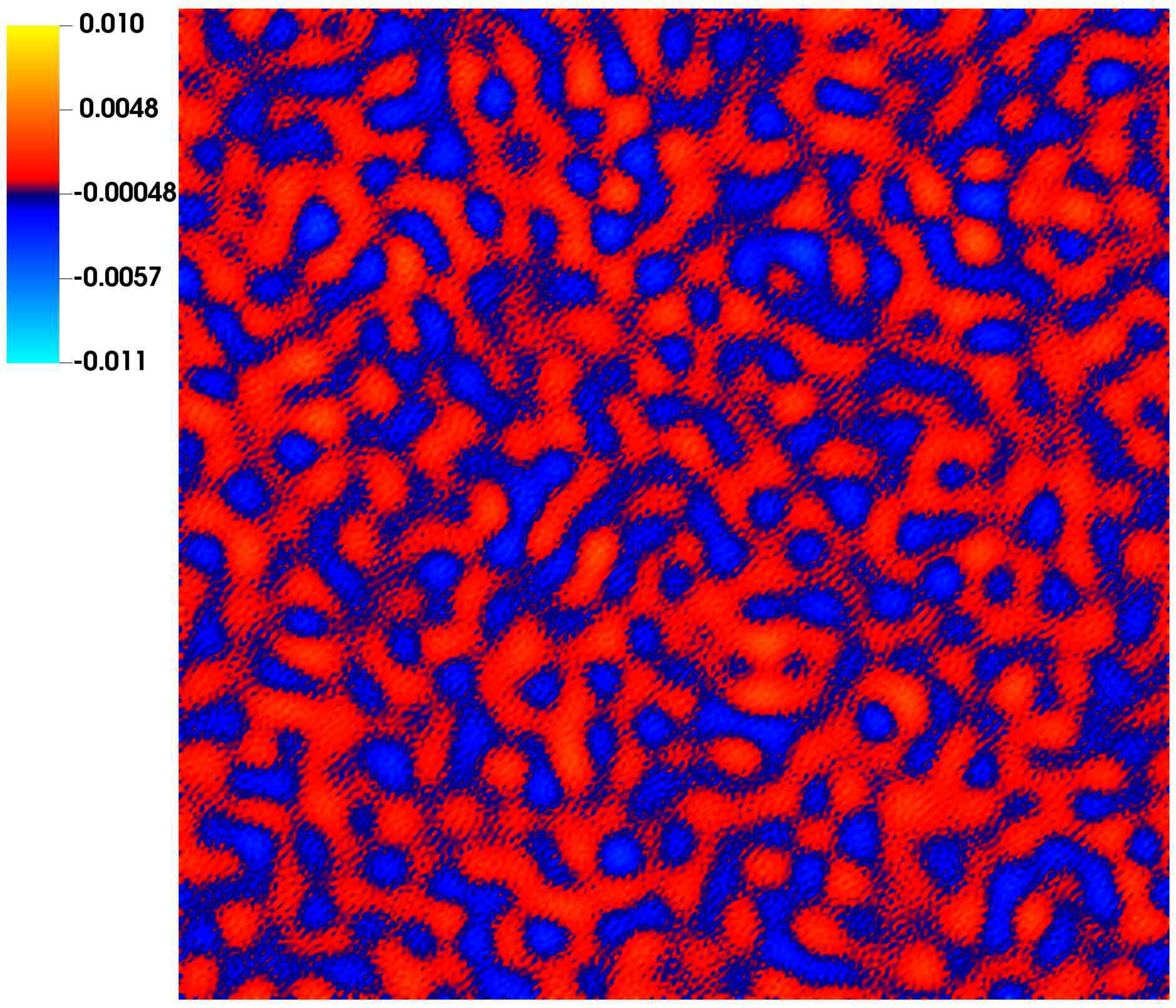}
\includegraphics[width=0.24\textwidth]{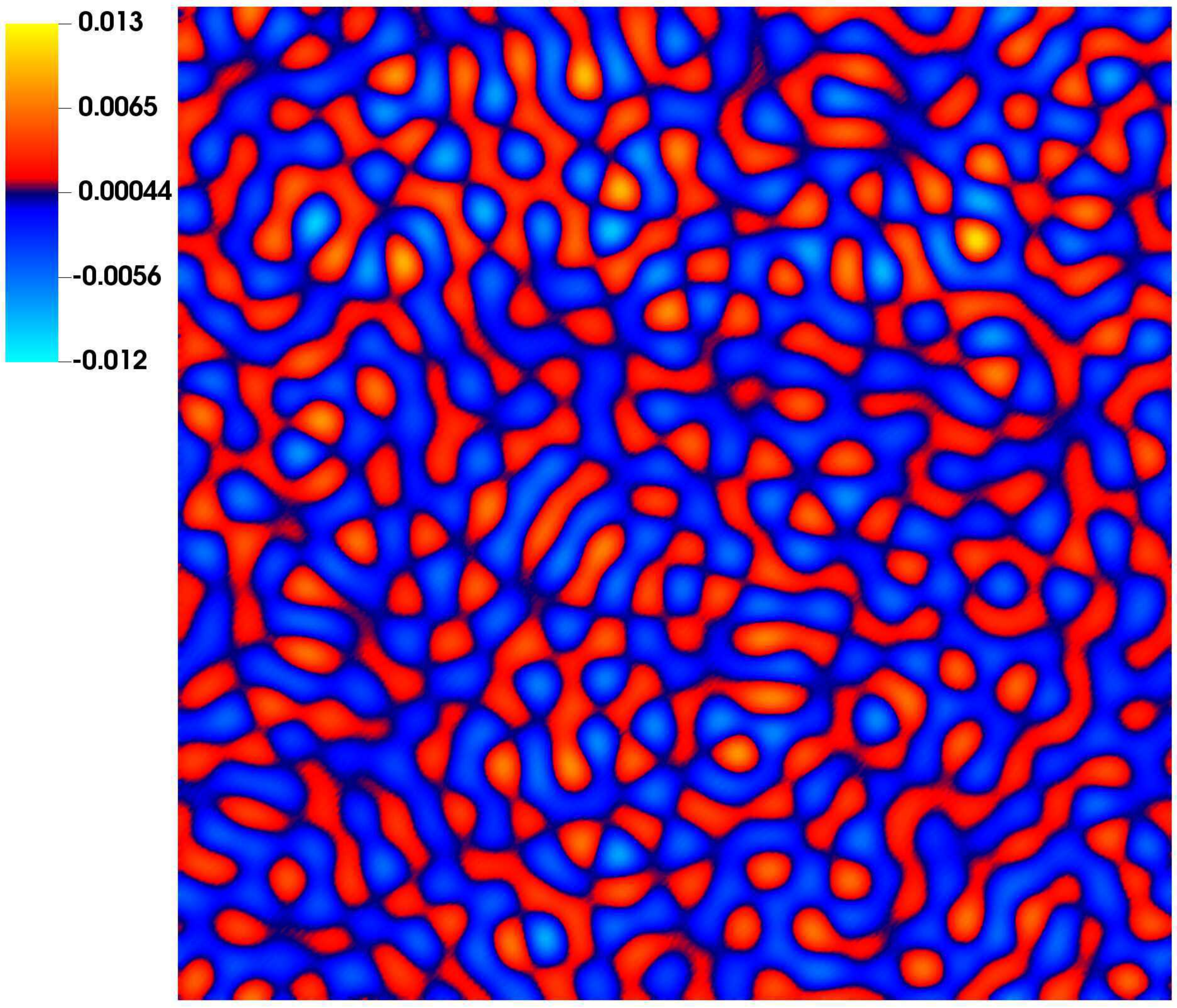}
\includegraphics[width=0.24\textwidth]{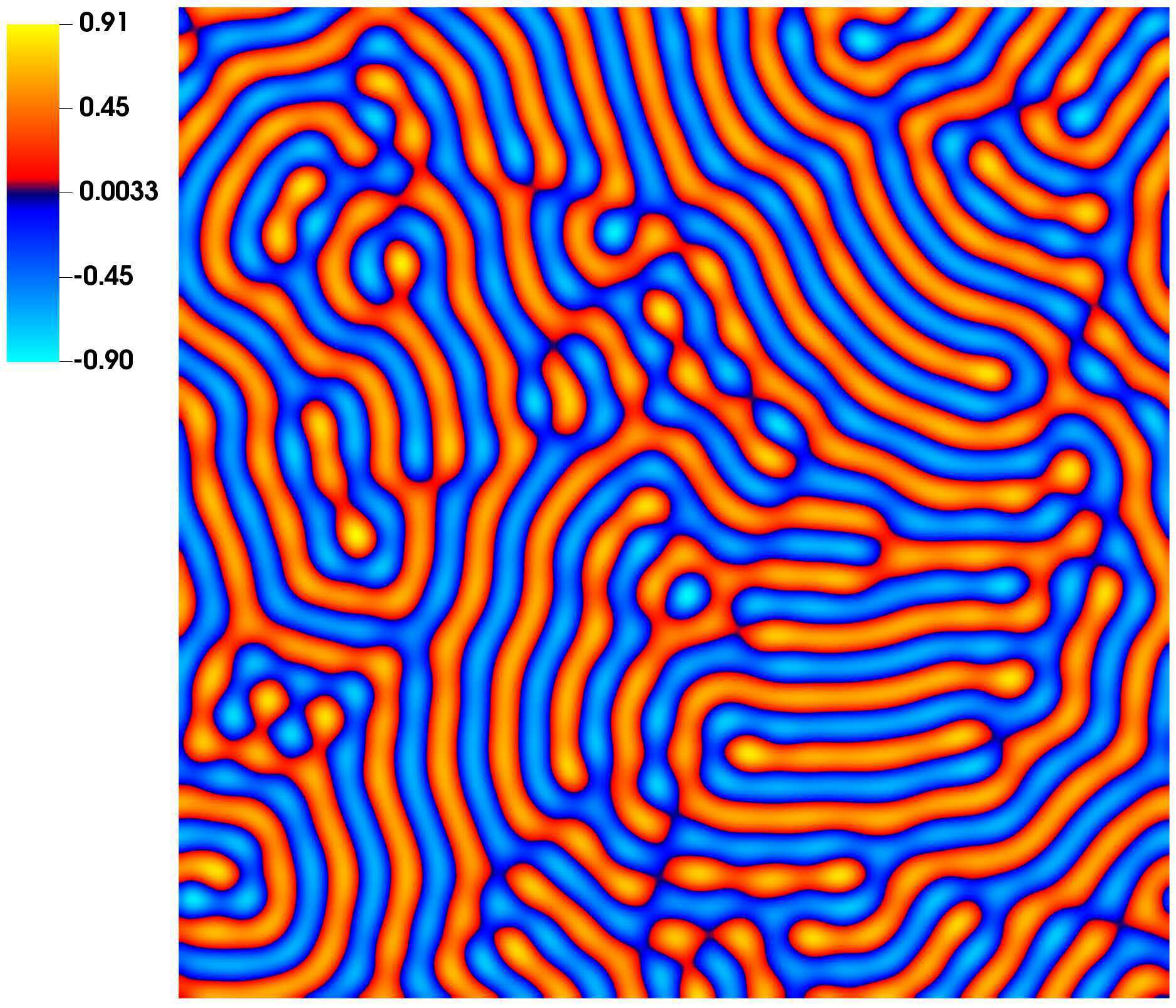}
\includegraphics[width=0.24\textwidth]{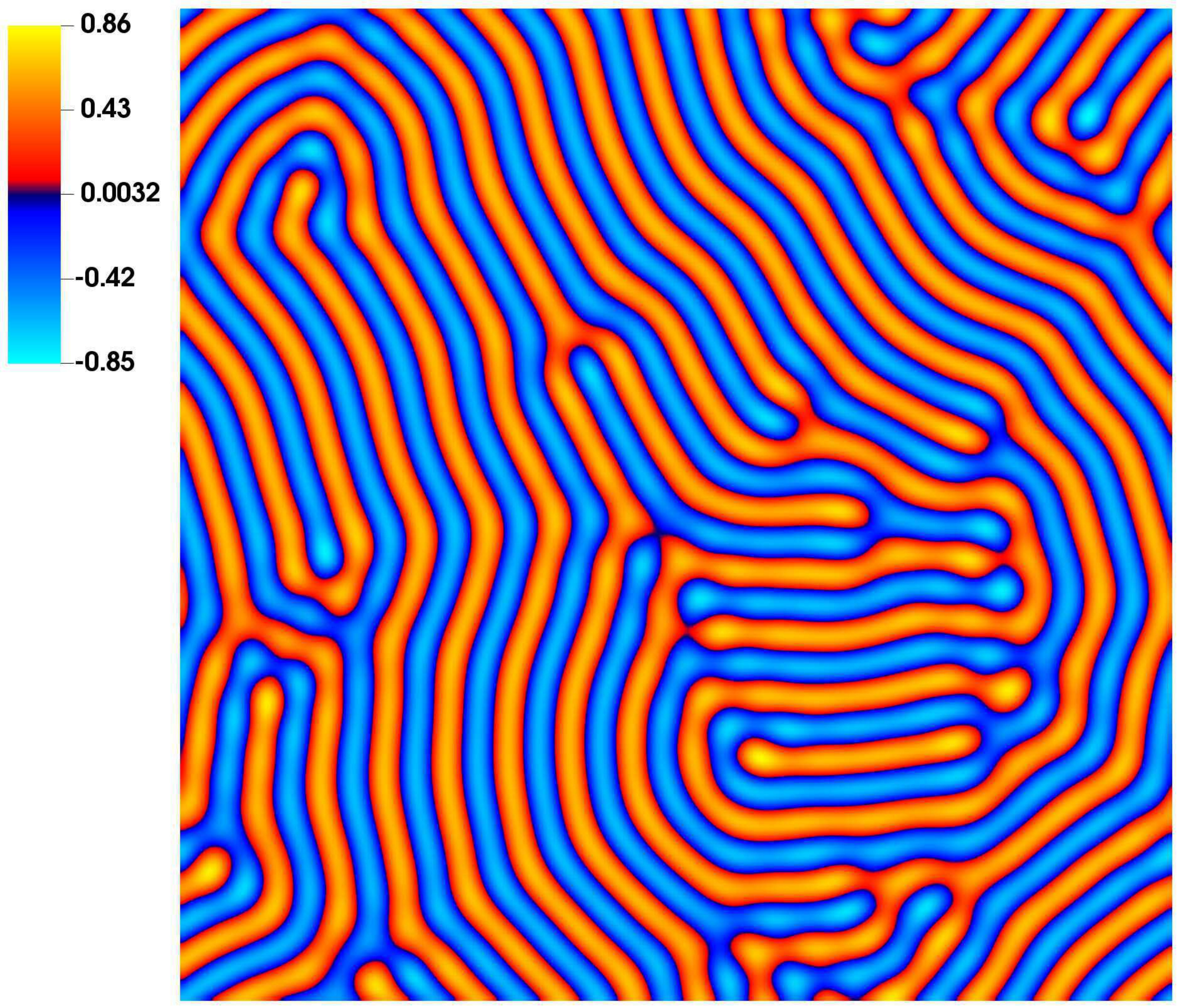}
}

\subfigure[$\phi$ at $t=1,5,50, 200$ with $\hat{\phi}_0=0.2$]{
\includegraphics[width=0.24\textwidth]{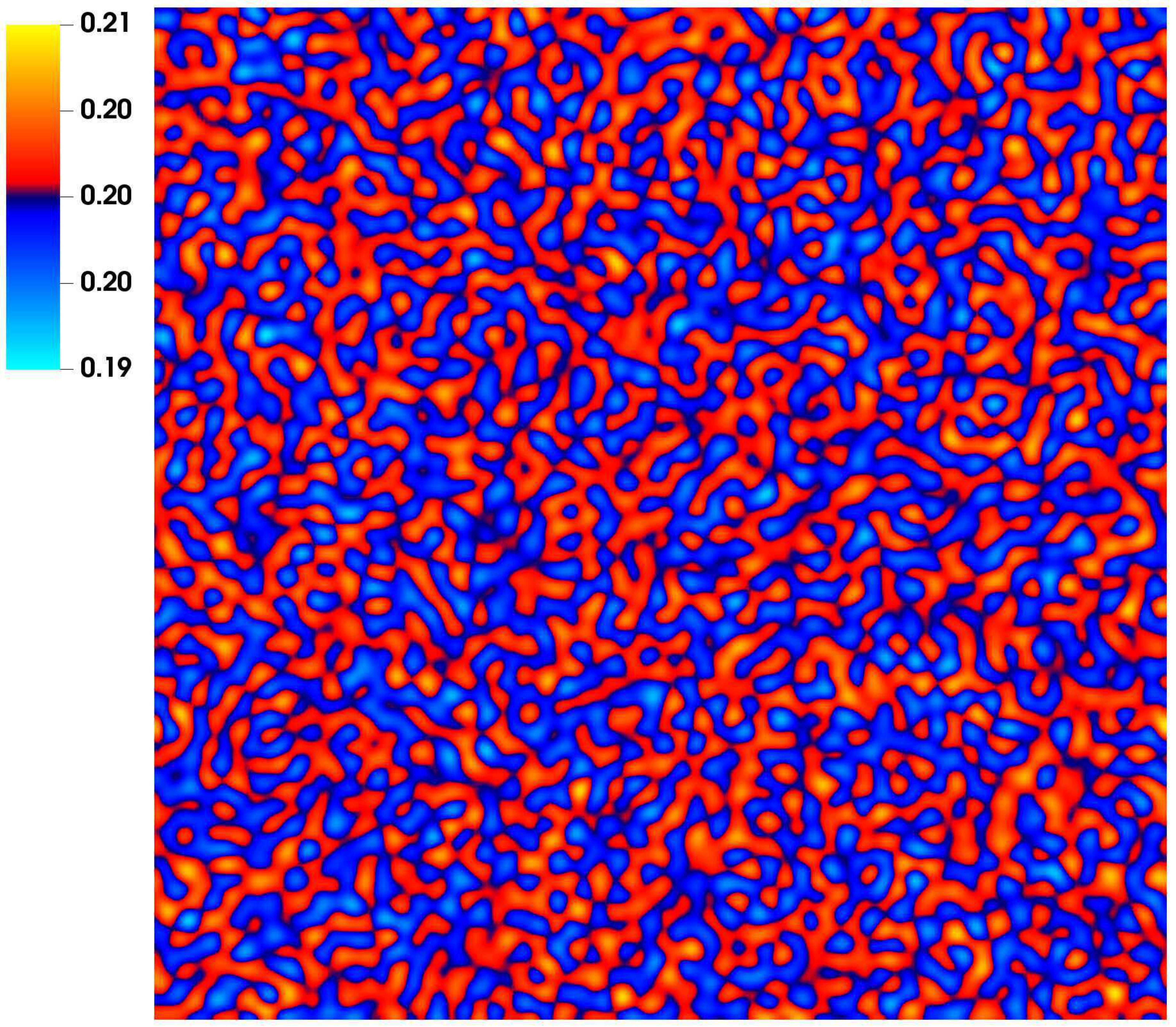}
\includegraphics[width=0.24\textwidth]{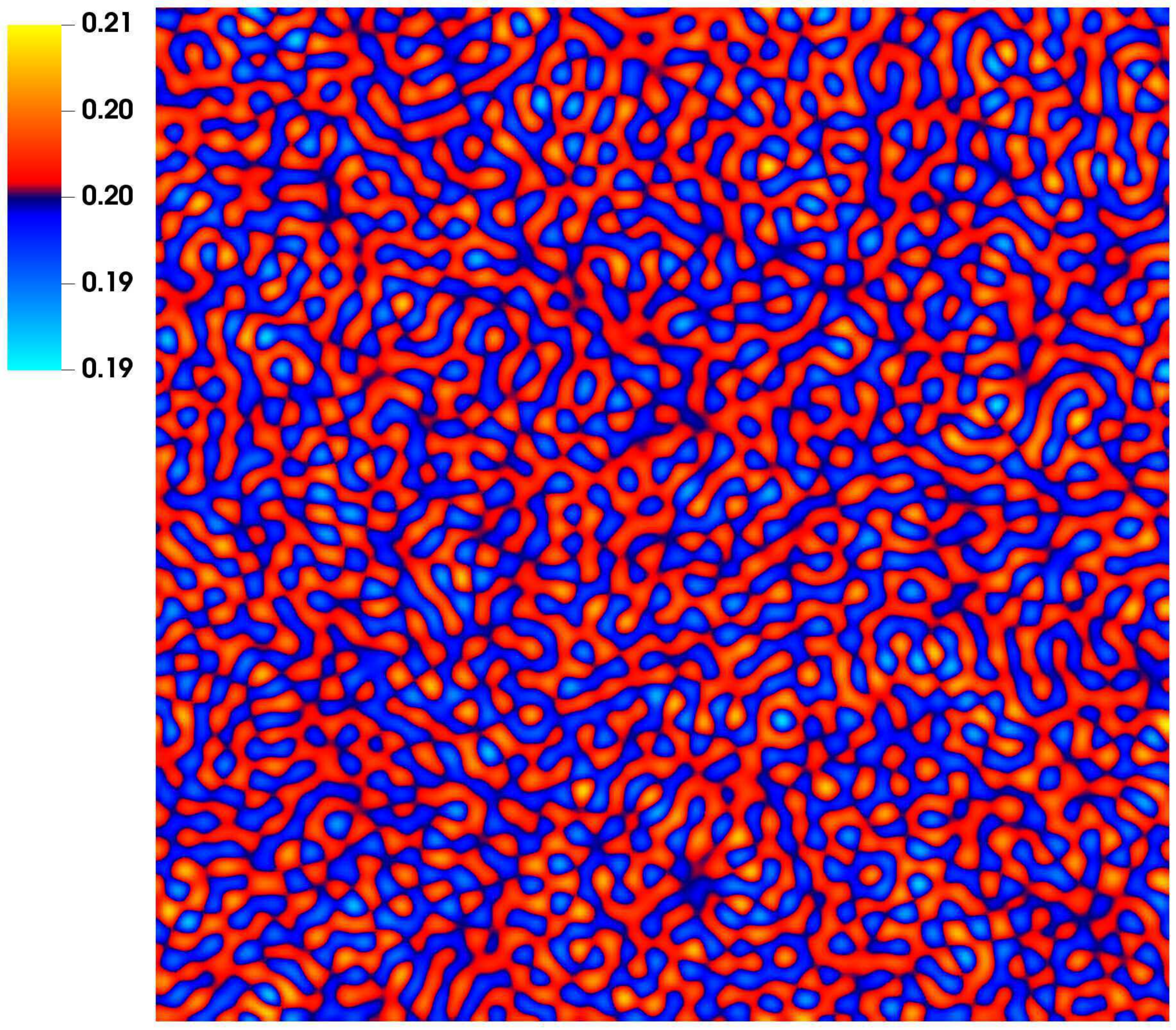}
\includegraphics[width=0.24\textwidth]{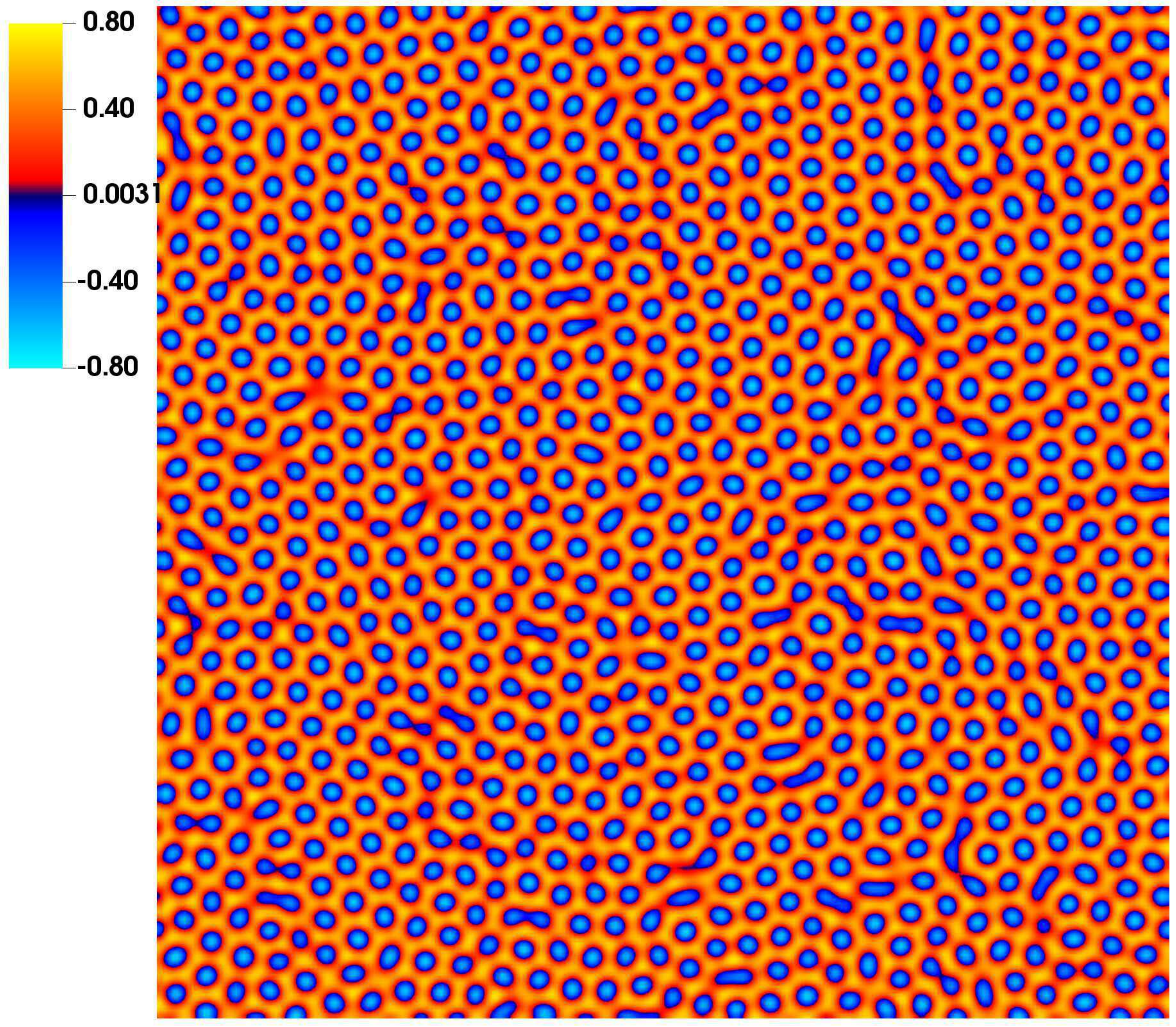}
\includegraphics[width=0.24\textwidth]{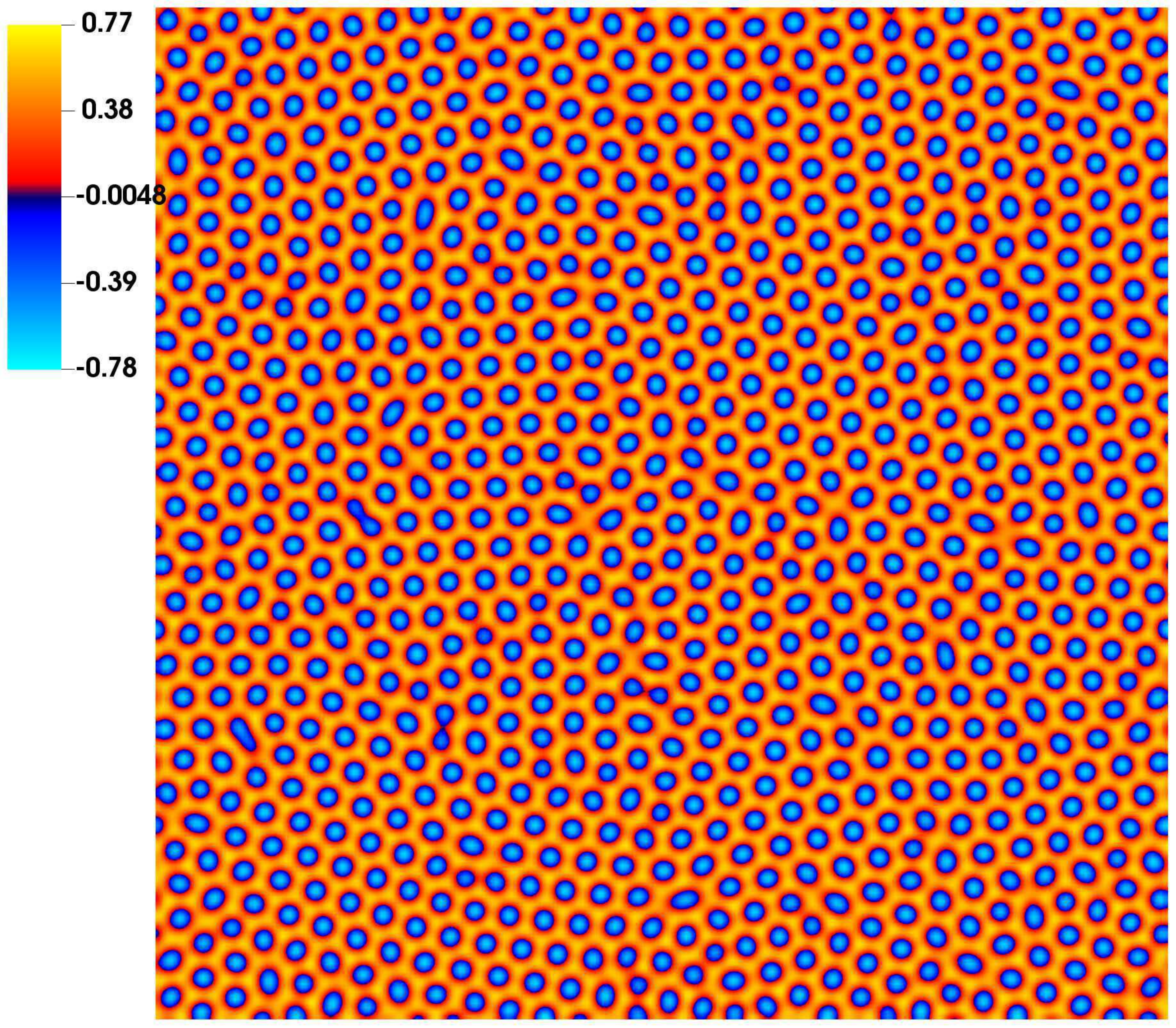}
}
\caption{Crystal growth pattern formation with different initial conditions governed by the PFC model. (a) $\hat{\phi}_0=0$; (b) $\hat{\phi}_0=0.2$.}
\label{fig:PFC-pattern}
\end{figure}

In the last example, we examine the diblock copolymer model with the proposed RSAV method. Consider the free energy 
$$\cE = \int_\Omega \frac{\varepsilon^2}{2}|\nabla \phi|^2 + \frac{1}{4}(\phi^2-1)^2d\bx + \frac{\sigma}{2} \int_\Omega \int_\Omega G(\bx-\mathbf{y}) (\phi(\bx)-\hat{\phi}_0)(\phi(\mathbf{y})-\hat{\phi}_0)d\bx d\mathbf{y}$$
with $\sigma$ a parameter for the nonlocal interaction strength. Here $G$ is the Green's function such that $\Delta G(\bx-\mathbf{y})=-\delta (\bx-\mathbf{y})$ with periodic boundary condition and $\delta$ is a Dirac delta function. And consider the mobility operator $\cG=-\lambda\Delta$. The general gradient flow model in \eqref{eq:generic-model} is specified into the phase-field diblock-copolymer model, which reads as
\begin{subequations}
\begin{align}
& \partial_t \phi = \lambda \Big[ \Delta \mu - \sigma (\phi    - \hat{\phi}_0) \Big], \\
& \mu = -\varepsilon^2 \Delta \phi + \phi^3 - \phi.
\end{align}
\end{subequations}
We consider a domain $\Omega=[0, 1]^2$, and parameters $\lambda=0.1$, $\varepsilon=0.01$, $\hat{\phi}_0=0.4$, and initial condition $\phi(x, y, t=0)=\hat{\phi}_0+0.05 rand(x,y)$ where $rand(x,y)$ generates random numbers in the range $[-1, 1]$. To solve the model, we use the numerical parameters $\gamma_0=4$, $C_0=1$, $N_x=N_y=128$. Here we test various nonlocal interaction strength $\sigma$. The numerical results at $t=500$ are summarized in Figure \ref{fig:OK-model}. We observe that the number of droplets scales with the nonlocal interaction strength $\sigma$.
\begin{figure}
\center
\subfigure[$\sigma=1$]{\includegraphics[width=0.24\textwidth]{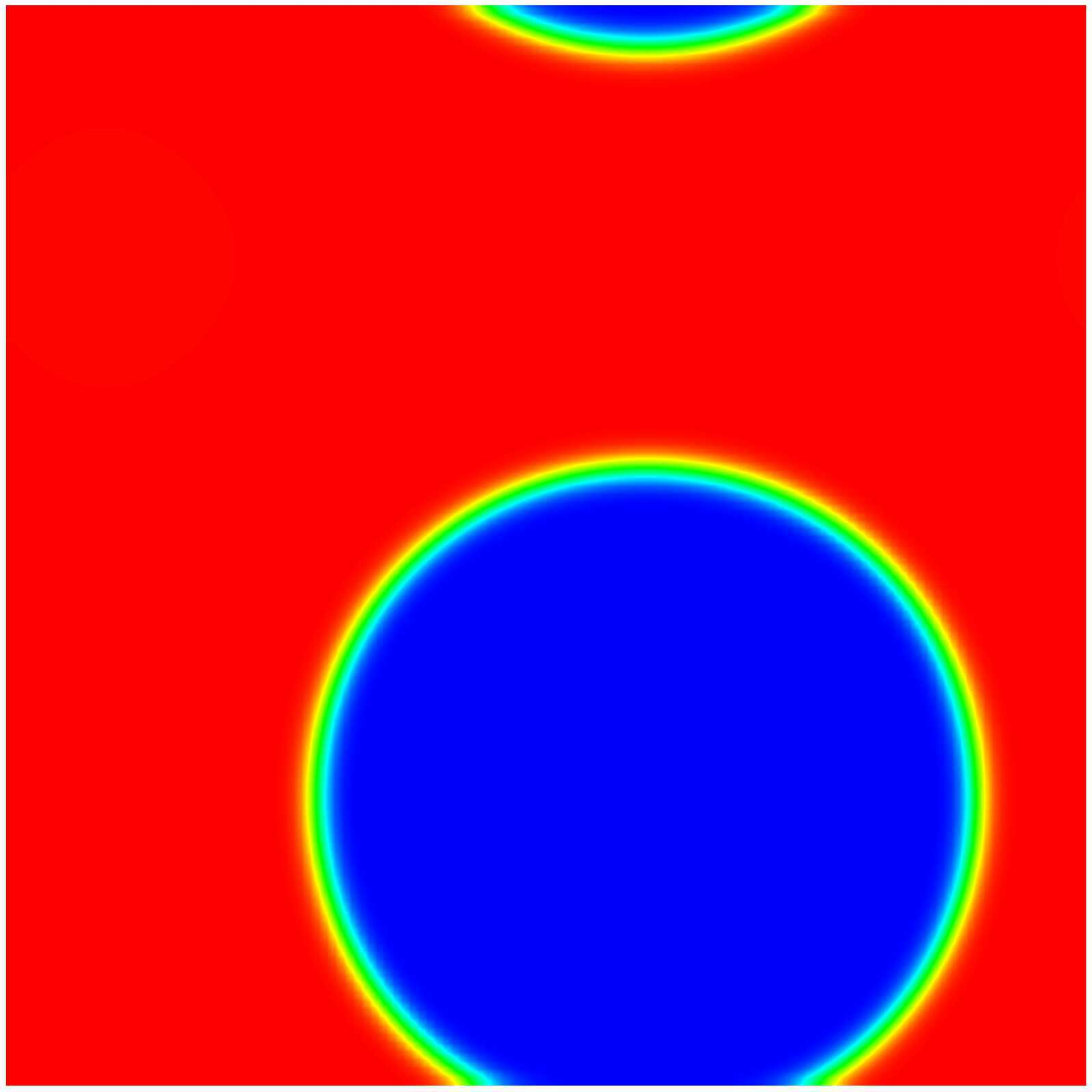}}
\subfigure[$\sigma=2$]{\includegraphics[width=0.24\textwidth]{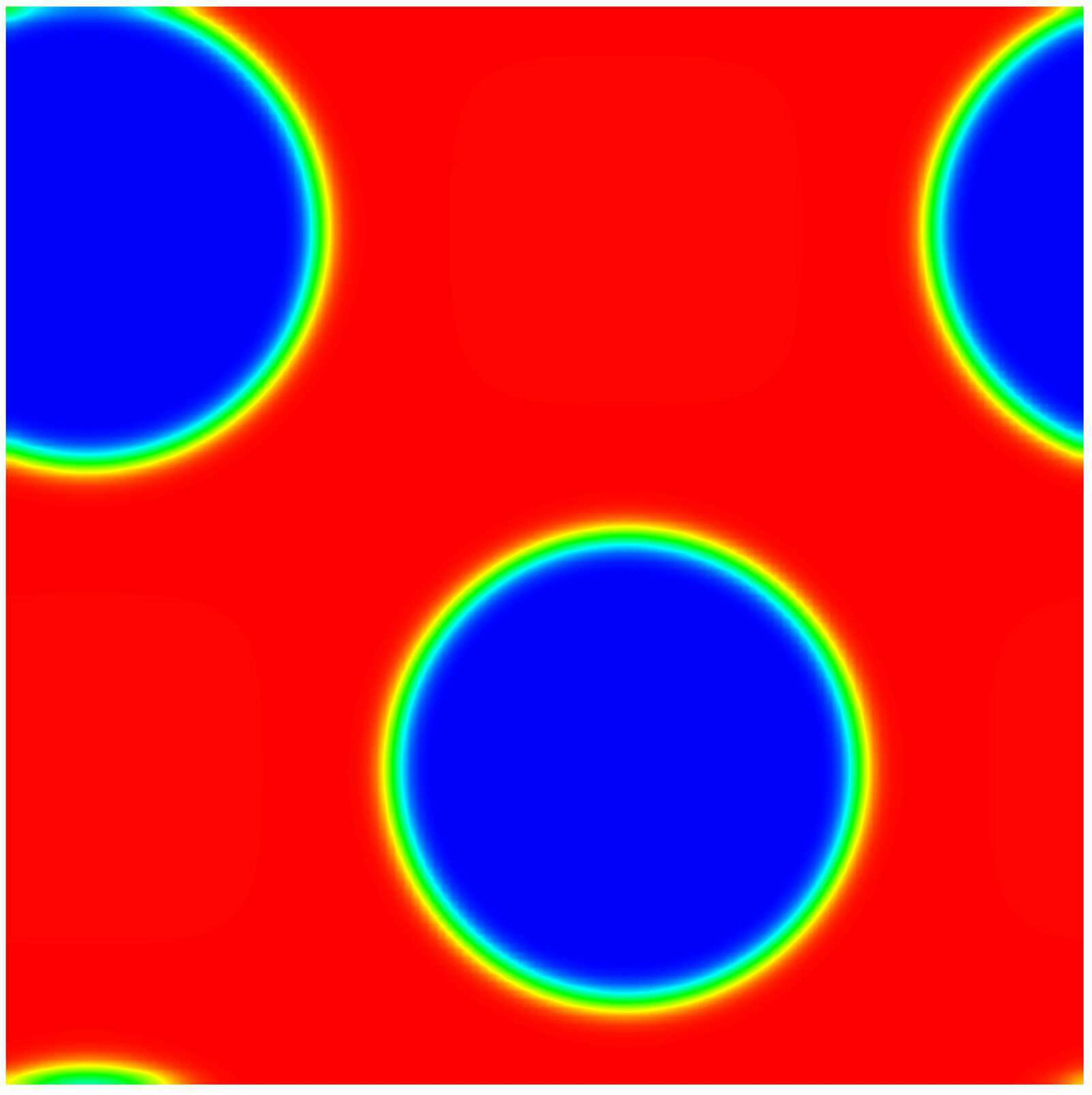}}
\subfigure[$\sigma=5$]{\includegraphics[width=0.24\textwidth]{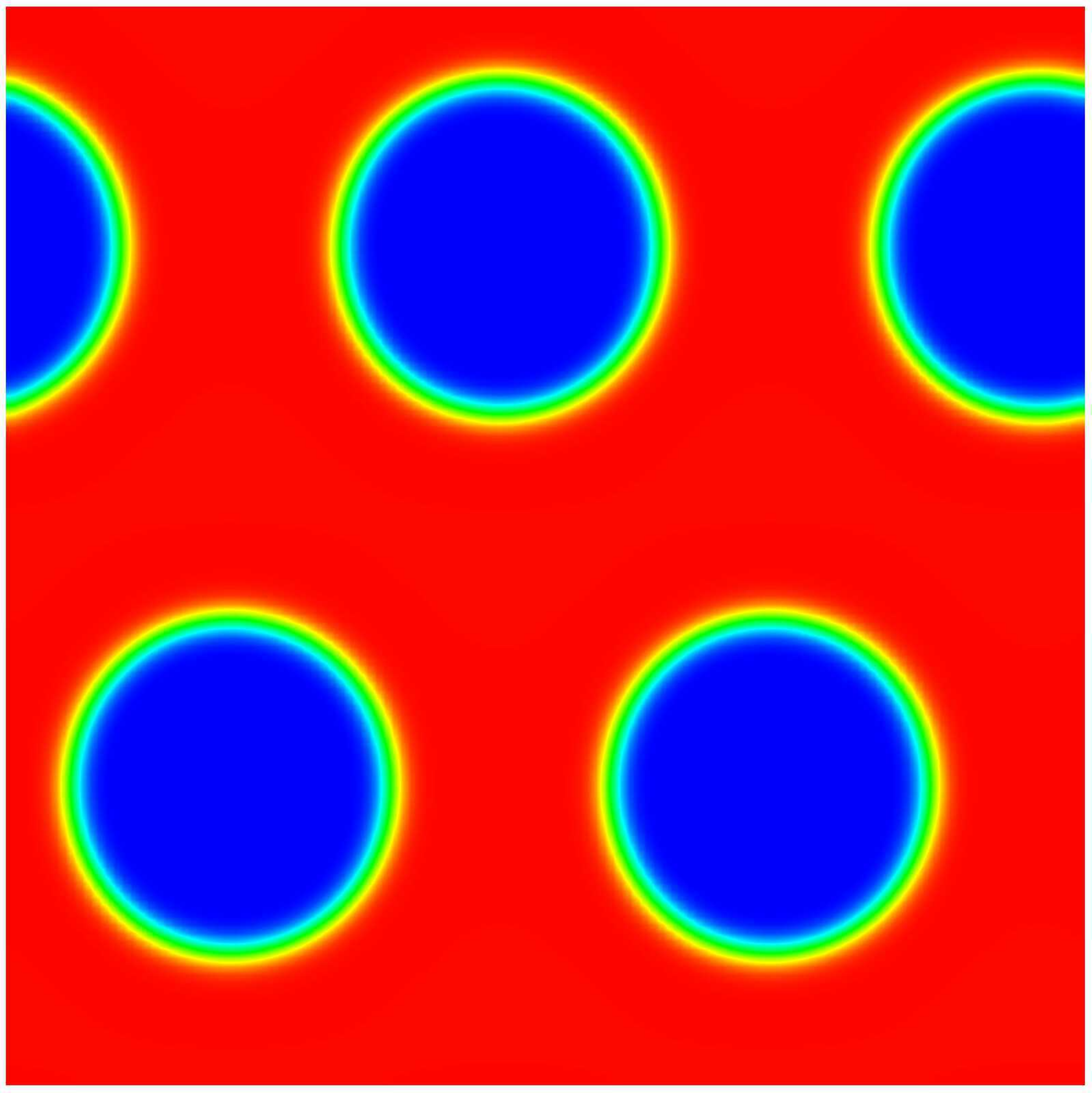}}

\subfigure[$\sigma=10$]{\includegraphics[width=0.24\textwidth]{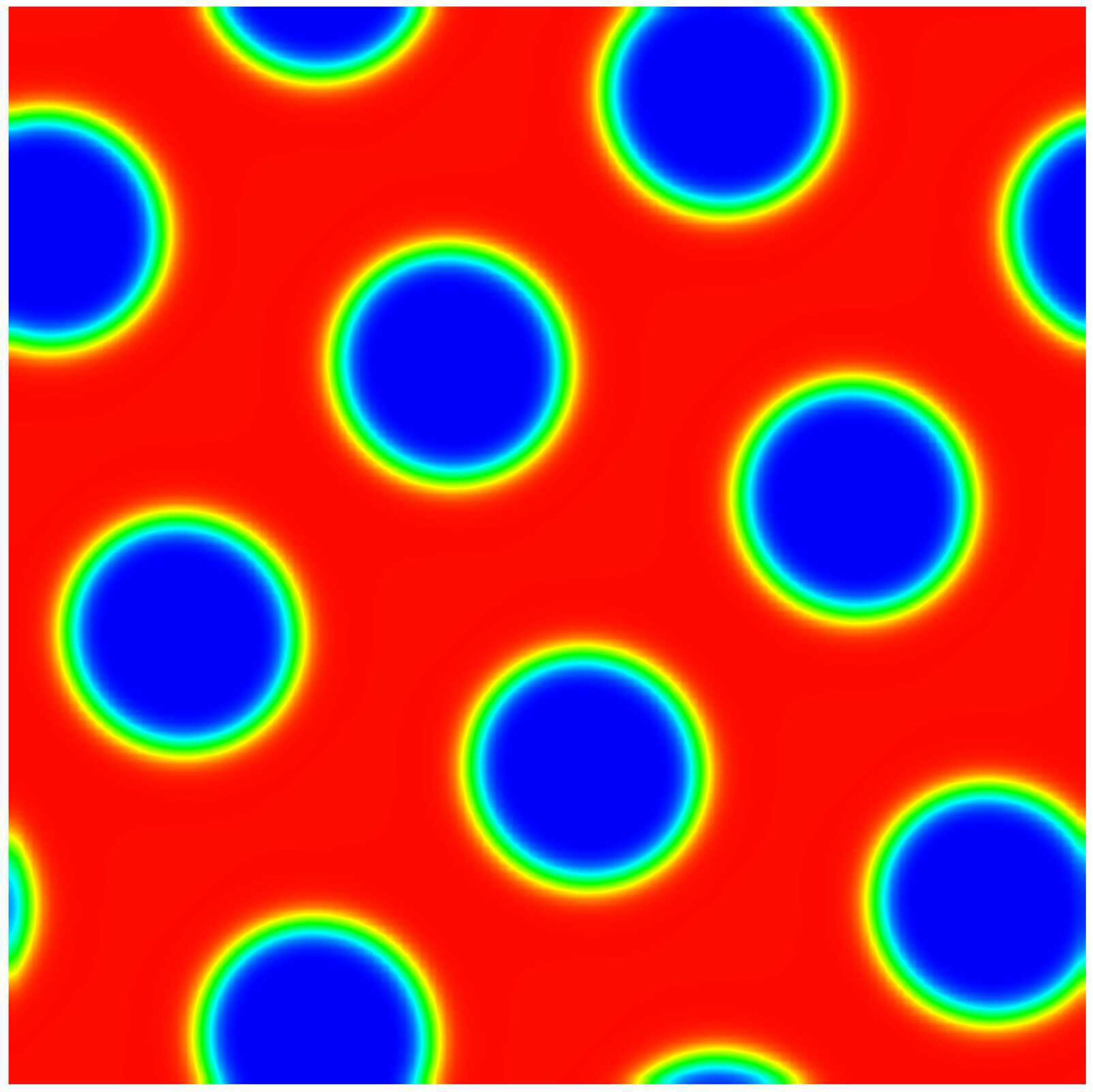}}
\subfigure[$\sigma=50$]{\includegraphics[width=0.24\textwidth]{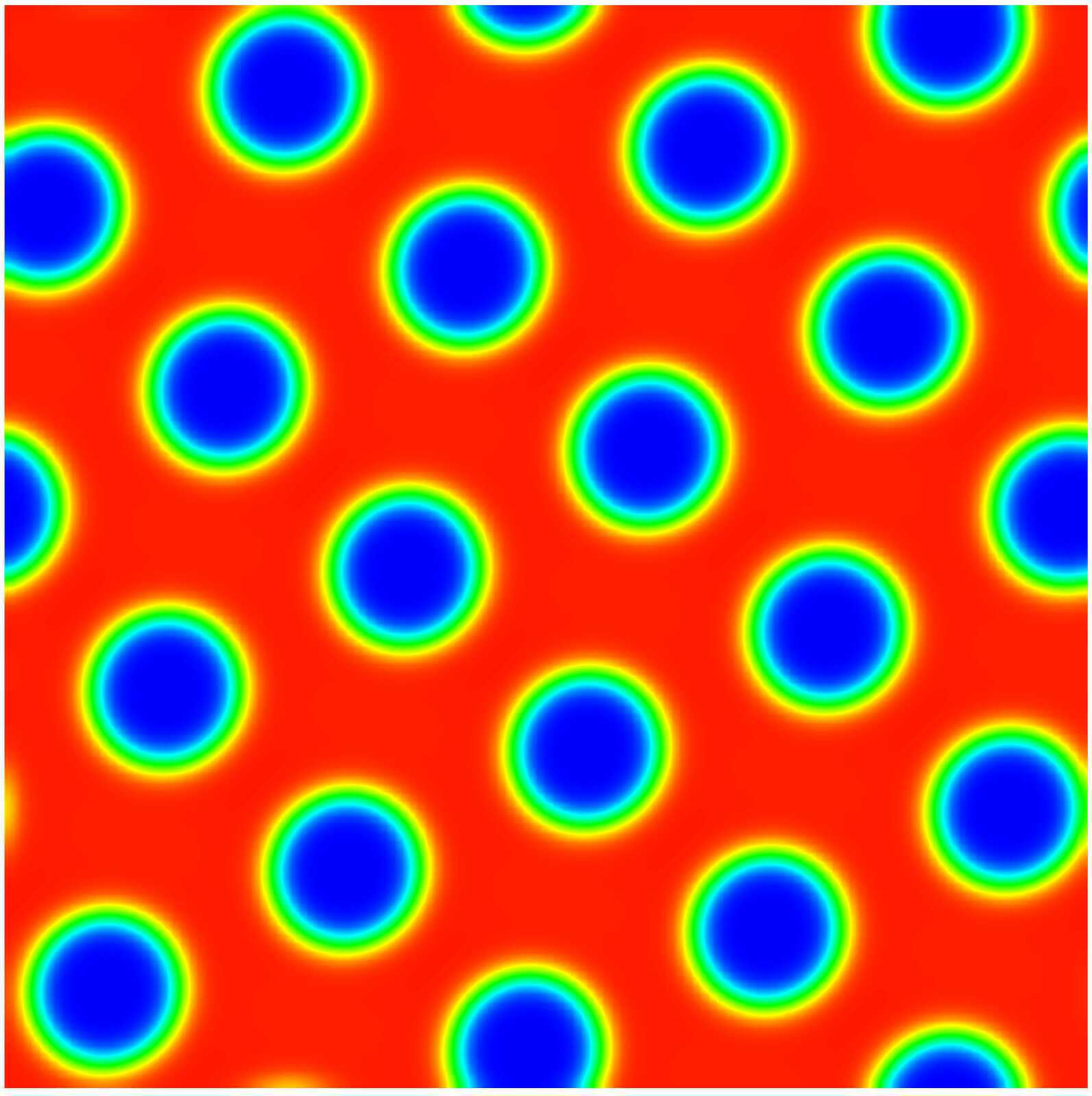}}
\subfigure[$\sigma=100$]{\includegraphics[width=0.24\textwidth]{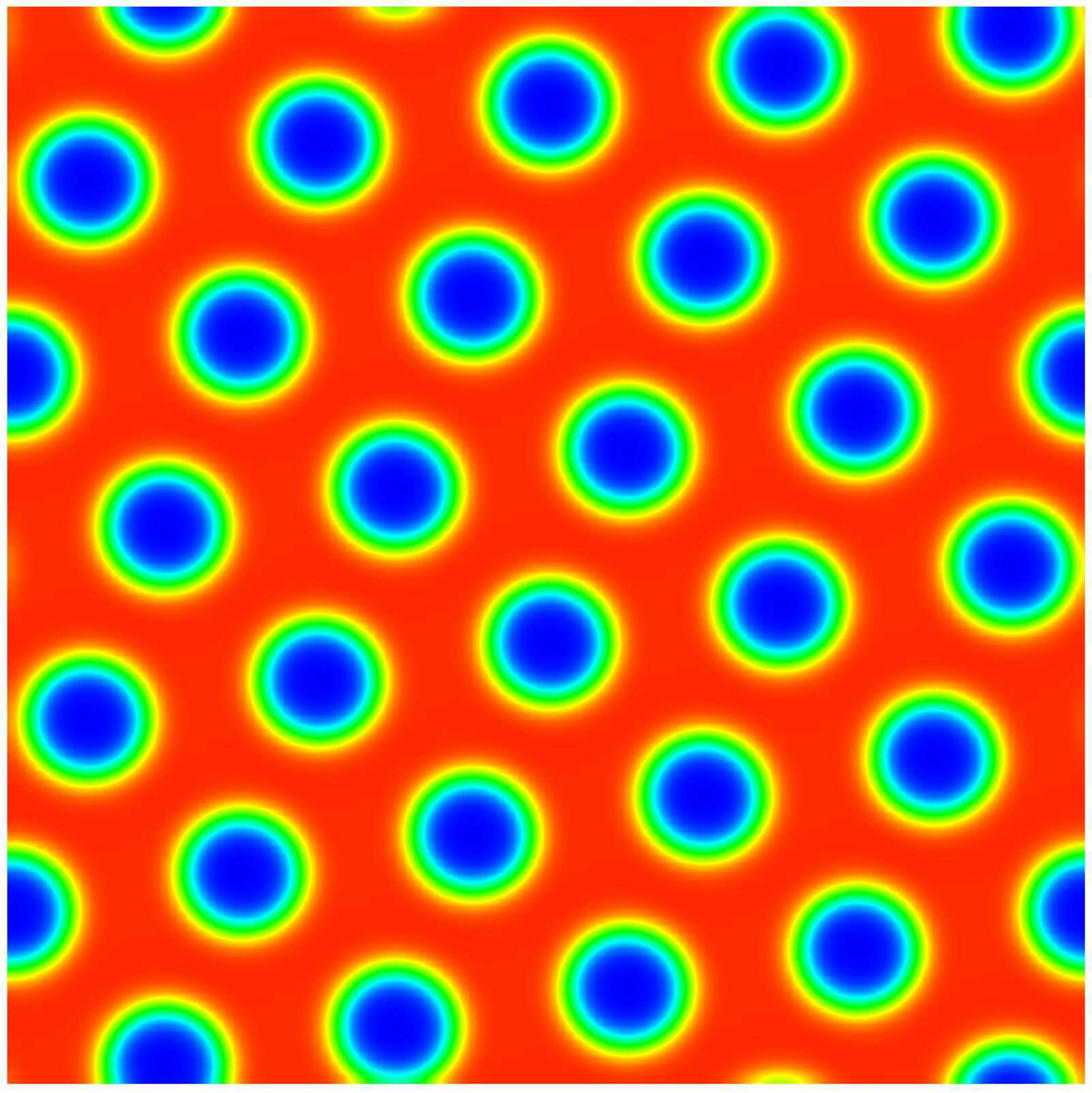}}
\caption{Coarsening dynamics driven by the dilbock copolymer model. The profiles of $\phi$ at $t=500$ are shown under various nonlocal interactions strength $\sigma$. It shows that more droplets are formed with stronger nonlocal interaction strength $\sigma$.}
\label{fig:OK-model}
\end{figure}

\section{Conclusion}
In this paper, we introduce a relaxation technique to improve the accuracy and consistency of the baseline SAV method for solving dissipative PDE models (phase-field models in particular). Our relaxed-SAV (RSAV) approach leads to linear, second-order, unconditional energy schemes. Most importantly, the RSAV schemes preserve the original energy given the relaxation parameter $\xi$ reaches 0.  Furthermore, we provide detailed proofs for the energy stability properties of the RSAV method. Then, we apply the RSAV method to solve the Allen-Cahn (AC)  equation, the Cahn-Hilliard (CH) equation, the Molecular Beam Epitaxy (MBE) model, the phase-field crystal (PFC) model, and the diblock copolymer model. Numerical experiments highlight the accuracy and efficiency of the proposed RSAV method. The numerical comparisons between the baseline SAV schemes and the RSAV schemes indicate that the RSAV method is unconditional energy stable according to the original energy laws and has better accuracy and consistency over the baseline SAV method.

\section*{Acknowledgments}
Jia Zhao would like to acknowledge the support from National Science Foundation with grant NSF-DMS-1816783.

\end{document}